\documentclass[a4paper,10pt]{amsart} 

\usepackage[utf8]{inputenc}
\usepackage{url}

\textwidth=14.5cm
\oddsidemargin=1cm
\evensidemargin=1cm

\usepackage[expansion=false]{microtype}
\usepackage{amssymb,amsmath,amsopn,amsxtra,amsthm,amsfonts,mathtools}
\usepackage{xr}
\usepackage[mathcal]{euscript}
\usepackage{mathalfa}
\usepackage{txfonts}
\usepackage{todonotes}
\usepackage{xy}
\input xy
\xyoption{all}
\usepackage{tikz}
\usetikzlibrary{matrix,arrows,cd}



\usepackage[pdfusetitle,unicode]{hyperref}


\numberwithin{equation}{section}
\makeatletter
\let\c@equation\c@subsection

\makeatother

\newtheorem{cor}[subsection]{Corollary}
\newtheorem{lem}[subsection]{Lemma}

\newtheorem{thm}[subsection]{Theorem}

\newtheorem*{thm*}{Theorem}
\newtheorem*{lem*}{Lemma}


\theoremstyle{definition}
\newtheorem{defn}[subsection]{Definition}
\newtheorem{rem}[subsection]{Remark}

\theoremstyle{remark}

\renewcommand{\eqref}[1]{(\ref{#1})}

\usepackage{tikz}
\usetikzlibrary{matrix}
\usepackage{tikz-cd}
\tikzset{shorten <>/.style={shorten >=#1,shorten <=#1}}

\usepackage{fixltx2e}

\usepackage{xspace}

\usepackage{xifthen}

\usepackage{xparse}


\newcommand{\nc}{\newcommand}
\nc{\renc}{\renewcommand}
\nc{\ssec}{\subsection}
\nc{\sssec}{\subsubsection}
\nc{\on}{\operatorname}
\nc{\term}[1]{#1\xspace}

\DeclareMathSymbol{A}{\mathalpha}{operators}{`A}
\DeclareMathSymbol{B}{\mathalpha}{operators}{`B}
\DeclareMathSymbol{C}{\mathalpha}{operators}{`C}
\DeclareMathSymbol{D}{\mathalpha}{operators}{`D}
\DeclareMathSymbol{E}{\mathalpha}{operators}{`E}
\DeclareMathSymbol{F}{\mathalpha}{operators}{`F}
\DeclareMathSymbol{G}{\mathalpha}{operators}{`G}
\DeclareMathSymbol{H}{\mathalpha}{operators}{`H}
\DeclareMathSymbol{I}{\mathalpha}{operators}{`I}
\DeclareMathSymbol{J}{\mathalpha}{operators}{`J}
\DeclareMathSymbol{K}{\mathalpha}{operators}{`K}
\DeclareMathSymbol{L}{\mathalpha}{operators}{`L}
\DeclareMathSymbol{M}{\mathalpha}{operators}{`M}
\DeclareMathSymbol{N}{\mathalpha}{operators}{`N}
\DeclareMathSymbol{O}{\mathalpha}{operators}{`O}
\DeclareMathSymbol{P}{\mathalpha}{operators}{`P}
\DeclareMathSymbol{Q}{\mathalpha}{operators}{`Q}
\DeclareMathSymbol{R}{\mathalpha}{operators}{`R}
\DeclareMathSymbol{S}{\mathalpha}{operators}{`S}
\DeclareMathSymbol{T}{\mathalpha}{operators}{`T}
\DeclareMathSymbol{U}{\mathalpha}{operators}{`U}
\DeclareMathSymbol{V}{\mathalpha}{operators}{`V}
\DeclareMathSymbol{W}{\mathalpha}{operators}{`W}
\DeclareMathSymbol{X}{\mathalpha}{operators}{`X}
\DeclareMathSymbol{Y}{\mathalpha}{operators}{`Y}
\DeclareMathSymbol{Z}{\mathalpha}{operators}{`Z}

\nc{\sA}{\ensuremath{\mathcal{A}}\xspace}
\nc{\sB}{\ensuremath{\mathcal{B}}\xspace}
\nc{\sC}{\ensuremath{\mathcal{C}}\xspace}
\nc{\sD}{\ensuremath{\mathcal{D}}\xspace}
\nc{\sE}{\ensuremath{\mathcal{E}}\xspace}
\nc{\sF}{\ensuremath{\mathcal{F}}\xspace}
\nc{\sG}{\ensuremath{\mathcal{G}}\xspace}
\nc{\sH}{\ensuremath{\mathcal{H}}\xspace}
\nc{\sI}{\ensuremath{\mathcal{I}}\xspace}
\nc{\sJ}{\ensuremath{\mathcal{J}}\xspace}
\nc{\sK}{\ensuremath{\mathcal{K}}\xspace}
\nc{\sL}{\ensuremath{\mathcal{L}}\xspace}
\nc{\sM}{\ensuremath{\mathcal{M}}\xspace}
\nc{\sN}{\ensuremath{\mathcal{N}}\xspace}
\nc{\sO}{\ensuremath{\mathcal{O}}\xspace}
\nc{\sP}{\ensuremath{\mathcal{P}}\xspace}
\nc{\sQ}{\ensuremath{\mathcal{Q}}\xspace}
\nc{\sR}{\ensuremath{\mathcal{R}}\xspace}
\nc{\sS}{\ensuremath{\mathcal{S}}\xspace}
\nc{\sT}{\ensuremath{\mathcal{T}}\xspace}
\nc{\sU}{\ensuremath{\mathcal{U}}\xspace}
\nc{\sV}{\ensuremath{\mathcal{V}}\xspace}
\nc{\sW}{\ensuremath{\mathcal{W}}\xspace}
\nc{\sX}{\ensuremath{\mathcal{X}}\xspace}
\nc{\sY}{\ensuremath{\mathcal{Y}}\xspace}
\nc{\sZ}{\ensuremath{\mathcal{Z}}\xspace}

\nc{\bA}{\ensuremath{\mathbf{A}}\xspace}
\nc{\bB}{\ensuremath{\mathbf{B}}\xspace}
\nc{\bC}{\ensuremath{\mathbf{C}}\xspace}
\nc{\bD}{\ensuremath{\mathbf{D}}\xspace}
\nc{\bE}{\ensuremath{\mathbf{E}}\xspace}
\nc{\bF}{\ensuremath{\mathbf{F}}\xspace}
\nc{\bG}{\ensuremath{\mathbf{G}}\xspace}
\nc{\bH}{\ensuremath{\mathbf{H}}\xspace}
\nc{\bI}{\ensuremath{\mathbf{I}}\xspace}
\nc{\bJ}{\ensuremath{\mathbf{J}}\xspace}
\nc{\bK}{\ensuremath{\mathbf{K}}\xspace}
\nc{\bL}{\ensuremath{\mathbf{L}}\xspace}
\nc{\bM}{\ensuremath{\mathbf{M}}\xspace}
\nc{\bN}{\ensuremath{\mathbf{N}}\xspace}
\nc{\bO}{\ensuremath{\mathbf{O}}\xspace}
\nc{\bP}{\ensuremath{\mathbf{P}}\xspace}
\nc{\bQ}{\ensuremath{\mathbf{Q}}\xspace}
\nc{\bR}{\ensuremath{\mathbf{R}}\xspace}
\nc{\bS}{\ensuremath{\mathbf{S}}\xspace}
\nc{\bT}{\ensuremath{\mathbf{T}}\xspace}
\nc{\bU}{\ensuremath{\mathbf{U}}\xspace}
\nc{\bV}{\ensuremath{\mathbf{V}}\xspace}
\nc{\bW}{\ensuremath{\mathbf{W}}\xspace}
\nc{\bX}{\ensuremath{\mathbf{X}}\xspace}
\nc{\bY}{\ensuremath{\mathbf{Y}}\xspace}
\nc{\bZ}{\ensuremath{\mathbf{Z}}\xspace}

\nc{\dA}{\ensuremath{\mathds{A}}\xspace}
\nc{\dB}{\ensuremath{\mathds{B}}\xspace}
\nc{\dC}{\ensuremath{\mathds{C}}\xspace}
\nc{\dD}{\ensuremath{\mathds{D}}\xspace}
\nc{\dE}{\ensuremath{\mathds{E}}\xspace}
\nc{\dF}{\ensuremath{\mathds{F}}\xspace}
\nc{\dG}{\ensuremath{\mathds{G}}\xspace}
\nc{\dH}{\ensuremath{\mathds{H}}\xspace}
\nc{\dI}{\ensuremath{\mathds{I}}\xspace}
\nc{\dJ}{\ensuremath{\mathds{J}}\xspace}
\nc{\dK}{\ensuremath{\mathds{K}}\xspace}
\nc{\dL}{\ensuremath{\mathds{L}}\xspace}
\nc{\dM}{\ensuremath{\mathds{M}}\xspace}
\nc{\dN}{\ensuremath{\mathds{N}}\xspace}
\nc{\dO}{\ensuremath{\mathds{O}}\xspace}
\nc{\dP}{\ensuremath{\mathds{P}}\xspace}
\nc{\dQ}{\ensuremath{\mathds{Q}}\xspace}
\nc{\dR}{\ensuremath{\mathds{R}}\xspace}
\nc{\dS}{\ensuremath{\mathds{S}}\xspace}
\nc{\dT}{\ensuremath{\mathds{T}}\xspace}
\nc{\dU}{\ensuremath{\mathds{U}}\xspace}
\nc{\dV}{\ensuremath{\mathds{V}}\xspace}
\nc{\dW}{\ensuremath{\mathds{W}}\xspace}
\nc{\dX}{\ensuremath{\mathds{X}}\xspace}
\nc{\dY}{\ensuremath{\mathds{Y}}\xspace}
\nc{\dZ}{\ensuremath{\mathds{Z}}\xspace}

\nc{\bbA}{\ensuremath{\mathbb{A}}\xspace}
\nc{\bbB}{\ensuremath{\mathbb{B}}\xspace}
\nc{\bbC}{\ensuremath{\mathbb{C}}\xspace}
\nc{\bbD}{\ensuremath{\mathbb{D}}\xspace}
\nc{\bbE}{\ensuremath{\mathbb{E}}\xspace}
\nc{\bbF}{\ensuremath{\mathbb{F}}\xspace}
\nc{\bbG}{\ensuremath{\mathbb{G}}\xspace}
\nc{\bbH}{\ensuremath{\mathbb{H}}\xspace}
\nc{\bbI}{\ensuremath{\mathbb{I}}\xspace}
\nc{\bbJ}{\ensuremath{\mathbb{J}}\xspace}
\nc{\bbK}{\ensuremath{\mathbb{K}}\xspace}
\nc{\bbL}{\ensuremath{\mathbb{L}}\xspace}
\nc{\bbM}{\ensuremath{\mathbb{M}}\xspace}
\nc{\bbN}{\ensuremath{\mathbb{N}}\xspace}
\nc{\bbO}{\ensuremath{\mathbb{O}}\xspace}
\nc{\bbP}{\ensuremath{\mathbb{P}}\xspace}
\nc{\bbQ}{\ensuremath{\mathbb{Q}}\xspace}
\nc{\bbR}{\ensuremath{\mathbb{R}}\xspace}
\nc{\bbS}{\ensuremath{\mathbb{S}}\xspace}
\nc{\bbT}{\ensuremath{\mathbb{T}}\xspace}
\nc{\bbU}{\ensuremath{\mathbb{U}}\xspace}
\nc{\bbV}{\ensuremath{\mathbb{V}}\xspace}
\nc{\bbW}{\ensuremath{\mathbb{W}}\xspace}
\nc{\bbX}{\ensuremath{\mathbb{X}}\xspace}
\nc{\bbY}{\ensuremath{\mathbb{Y}}\xspace}
\nc{\bbZ}{\ensuremath{\mathbb{Z}}\xspace}

\nc{\mrm}[1]{\ensuremath{\mathrm{#1}}\xspace}
\nc{\mbf}[1]{\ensuremath{\mathbf{#1}}\xspace}
\nc{\mcal}[1]{\ensuremath{\mathcal{#1}}\xspace}
\nc{\msc}[1]{\ensuremath{\mathscr{#1}}\xspace}

\renc{\bar}[1]{\overline{#1}}

\nc{\sub}{\subset}
\nc{\too}{\longrightarrow}
\nc{\hook}{\hookrightarrow}
\nc*{\hooklongrightarrow}{\ensuremath{\lhook\joinrel\relbar\joinrel\rightarrow}}
\nc{\hooklong}{\hooklongrightarrow}
\nc{\twoheadlongrightarrow}{\relbar\joinrel\twoheadrightarrow}
\nc{\shiso}{\approx}
\nc{\isoto}{\xrightarrow{\sim}}
\nc{\isofrom}{\xleftarrow{\sim}}
\renc{\ge}{\geqslant}
\renc{\le}{\leqslant}
\renc{\geq}{\geqslant}
\renc{\leq}{\leqslant}

\nc{\id}{\mathrm{id}}
\nc{\Id}{\mathrm{Id}}
\nc{\can}{\mathrm{can}}

\let\Im\relax
\DeclareMathOperator{\Im}{\mathrm{Im}}
\DeclareMathOperator{\rk}{\mathrm{rk}}
\DeclareMathOperator{\sgn}{\mathrm{sgn}}
\DeclareMathOperator{\Hom}{\mathrm{Hom}}
\DeclareMathOperator{\Real}{\mathrm{Re}}
\nc{\uHom}{\underline{\smash{\Hom}}}

\DeclareMathOperator{\End}{\mathrm{End}}

\nc{\Pre}{\mathrm{PSh}{}}
\nc{\uEnd}{\underline{\smash{\End}}}
\DeclareMathOperator{\codim}{\mathrm{codim}}
\renc{\lim}{\operatorname*{lim}}
\nc{\colim}{\operatorname*{colim}}


\renc{\coprod}{\sqcup}

\nc{\bDelta}{\mbf{\Delta}}
\nc{\DM}{\mbf{DM}}
\nc{\eff}{\mathrm{eff}}
\nc{\bir}{\mathrm{bir}}
\nc{\SB}{\mathrm{SB}}
\nc{\veff}{\mathrm{veff}}
\nc{\cyc}{{\mrm{cyc}}}
\nc{\Cor}{{\mrm{Cor}}}
\nc{\corr}{{\on{corr}}}
\nc{\fet}{{\mrm{f\acute et}}}
\nc{\fsyn}{{\mrm{fsyn}}}
\nc{\fflat}{{\mrm{fflat}}}
\nc{\syn}{{\mrm{syn}}}
\nc{\Br}{{\mrm{Br}}}
\nc{\Perf}{\mathcal{P}\mrm{erf}}
\nc{\QCoh}{\mathcal{Q}\mrm{Coh}}
\nc{\Az}{\mathcal{A}\mrm{z}}
\nc{\Pic}{\mrm{Pic}}
\nc{\Nrd}{\mrm{Nrd}}
\nc{\perf}{\mrm{perf}}
\nc{\oblv}{\on{oblv}}
\nc{\exact}{\on{exact}}

\nc{\F}{{\on{F}}}
\nc{\clopen}{{\mrm{clopen}}}
\nc{\B}{\mrm{B}}
\nc{\D}{\mrm{D}}
\nc{\Fin}{\on{Fin}}
\nc{\Cut}{\on{Cut}}
\nc{\Cart}{\on{Cart}}
\nc{\pairs}{\mathsf{pairs}}
\nc{\Pairs}{\mathrm{Pair}}
\nc{\Trip}{\mathrm{Trip}}
\nc{\Lab}{\mathrm{Lab}}
\nc{\coCart}{\mathrm{coCart}}
\nc{\RKE}{\mathrm{RKE}}
\nc{\strict}{\mathrm{strict}}
\nc{\Emb}{\mathrm{Emb}}
\nc{\EMB}{\mathcal{E}\mathrm{mb}}
\nc{\Split}{\mathrm{Split}}
\nc{\Set}{\mathrm{Set}}
\nc{\sSets}{\mathrm{sSets}}
\nc{\pb}{\mathrm{pb}}
\nc{\lci}{\mathrm{lci}}
\nc{\fib}{\mathrm{fib}}
\nc{\cofib}{\mathrm{cofib}}
\nc{\diff}{\mrm{diff}}
\nc{\gp}{\mrm{gp}}
\nc{\ind}{\mrm{ind}}
\nc{\chr}{\mrm{char}}
\nc{\mgp}{\mrm{mot-gp}}
\nc{\FSyn}{\mrm{FSyn}}
\nc{\FFlat}{{\mrm{FFlat}}}
\nc{\FEt}{\mrm{FEt}}
\nc{\Spc}{\mrm{Spc}}
\nc{\Ob}{\mrm{Ob}}
\nc{\Spt}{\mrm{Spt}}
\nc{\tors}{\mrm{tors}}
\nc{\T}{\mathrm{T}}
\nc{\suspinf}{\Sigma^\infty}
\nc{\h}{\mrm{h}}
\nc{\uhom}{\underline{\mathrm{Hom}}}
\nc{\umap}{\underline{\mathrm{Maps}}}
\nc{\Map}{\mathrm{Map}}
\nc{\map}{\operatorname{map}}           
\nc{\Autom}{\mathrm{Aut}}
\renc{\H}{\bH}
\nc{\Einfty}{{\sE_\infty}}
\nc{\Eone}{{\sE_1}}
\nc{\Stab}{\mrm{Stab}}
\nc{\lax}{{\mrm{lax}}}
\nc{\cocart}{{\mrm{cocart}}}
\nc{\Sch}{\mbf{Sch}}
\nc{\dSch}{\mrm{dSch}}
\nc{\Aff}{\mrm{Aff}}
\nc{\SmAff}{\mrm{SmAff}}
\nc{\dAff}{\mrm{dAff}}
\nc{\Fr}{\on{Fr}}
\nc{\A}{\mathbf{A}}
\nc{\Pp}{\mathbf{P}}
\nc{\Ss}{\mathbf{S}}
\nc{\N}{\mathbb{N}}
\nc{\Z}{\mathbb{Z}}
\nc{\Q}{\mathbb{Q}}
\nc{\Oo}{\mathcal{O}} 
\nc{\Aa}{\mathcal{A}} 
\nc{\Bb}{\mathcal{B}} 
\nc{\Ee}{\mathcal{E}}
\nc{\Ff}{\mathcal{F}} 
\nc{\Gg}{\mathcal{G}} 
\nc{\Vv}{\mathcal{V}}
\nc{\red}{{\on{red}}}
\nc{\Voev}{{\on{Voev}}}
\nc{\Corr}{\mrm{Corr}}
\nc{\Span}{\mathbf{Corr}}
\nc{\Gap}{\mrm{Gap}}
\nc{\Filt}{\mrm{Filt}}
\nc{\Corrfr}{\Corr^{\fr}}
\nc{\Corrvfr}{\Corr^{\Vfr}}
\nc{\Spec}{\on{Spec}}
\nc{\Sm}{\mbf{Sm}}
\nc{\QSm}{\mrm{QSm}}
\nc{\Gm}{\mathbb{G}_{\mrm{m}}}
\renc{\P}{\bP}
\nc{\nis}{\mathrm{nis}}
\nc{\KH}{\mathrm{KH}}
\nc{\bil}{\mathrm{bil}}
\nc{\Zar}{\mathrm{Zar}}
\nc{\zar}{\mathrm{zar}}
\nc{\Nis}{\mathrm{Nis}}
\nc{\et}{\mathrm{\acute et}}
\nc{\all}{\mathrm{all}}
\nc{\fold}{\mathrm{fold}}
\nc{\Fun}{\mathrm{Fun}}
\nc{\Ho}{\mathrm{Ho}}
\nc{\Segal}{\mathrm{Segal}}
\nc{\Mon}{\mrm{Mon}{}}
\nc{\Ab}{\mrm{Ab}}
\nc{\Gr}{\mrm{Gr}}
\nc{\Sh}{\on{Sh}}
\nc{\M}{\mrm{M}}
\nc{\Lhtp}{L_{\A^1}}
\nc{\Lzar}{L_{\Zar}}
\nc{\Lnis}{L_{\Nis}}
\nc{\Lmot}{L_{\mrm{mot}}}
\nc{\mot}{\mrm{mot}}
\nc{\SH}{\mbf{SH}}
\nc{\HH}{\mbf{H}}
\nc{\RR}{\mathbb{R}}
\nc{\CC}{\mathbb{C}}
\nc{\Mod}{\mrm{Mod}}

\nc{\MonUnit}{\mbf{1}}
\nc{\tr}{\on{tr}}
\nc{\vop}{\mrm{vop}}
\nc{\fr}{{\on{fr}}}
\nc{\Ar}{\mrm{Ar}}
\nc{\Vfr}{\on{Vfr}}
\nc{\frdiff}{{\on{frdiff}}}
\nc{\frGys}{\on{frGys}}
\nc{\SHfr}{\SH^{\fr}}
\nc{\SHfrdiff}{\SH^{\frdiff}}
\nc{\SHfrGys}{\SH^{\frGys}}
\nc{\InftyCat}{\infty\textnormal{-}\mrm{Cat}}
\nc{\TriCat}{\mathrm{TriCat}}
\nc{\Cat}{\mathrm{1\textnormal{-}Cat}}
\nc{\Th}{\on{Th}}

\nc{\CMon}{\mrm{CMon}{}}
\nc{\CAlg}{\mrm{CAlg}{}}
\nc{\MGL}{\mrm{MGL}}
\nc{\PMGL}{\mathrm{PMGL}}
\nc{\KGL}{\mrm{KGL}}
\nc{\kgl}{\mrm{kgl}}
\nc{\MSL}{\mrm{MSL}}
\nc{\MSp}{\mrm{MSp}}
\nc{\Seg}{\mrm{Seg}{}}
\nc{\Tw}{\mrm{Tw}}
\nc{\sslash}{/\mkern-6mu/}
\nc{\PrL}{\mrm{Pr}^\mrm{L}}
\nc{\PrR}{\mrm{Pr}^\mrm{R}}
\nc{\pr}{\mrm{pr}}
\nc{\efr}{\mrm{efr}}
\nc{\nfr}{\mrm{nfr}}
\nc{\dfr}{\mrm{fr}}
\nc{\tfr}{\mrm{tfr}}
\nc{\Vect}{\mathcal{V}\mrm{ect}}
\nc{\sVect}{\mrm{sVect}}
\nc{\Alg}{\mrm{Alg}}
\nc{\fix}{\mrm{fix}}
\nc{\Hilb}{\mathrm{Hilb}}
\nc{\flci}{\mathrm{flci}}
\nc{\Isom}{\mathrm{Isom}}
\nc{\GL}{\mathrm{GL}}
\nc{\BGL}{\mathrm{BGL}}
\nc{\PGL}{\mathrm{PGL}}
\nc{\SL}{\mathrm{SL}}
\nc{\Sp}{\mathrm{Sp}}
\nc{\fin}{\mathrm{fin}}
\nc{\cl}{\mathrm{cl}}
\nc{\cn}{\mathrm{cn}}
\nc{\sm}{\mathrm{sm}}
\nc{\heart}{\heartsuit}
\renewcommand{\1}{\mbf{1}}
\renc{\o}{\mrm{or}}
\nc{\GW}{\mrm{GW}}
\nc{\W}{\mrm{W}}
\nc{\ev}{\mrm{ev}}
\nc{\FSYN}{\mathcal{FS}\mrm{yn}}
\nc{\FFLAT}{\mathcal{FF}\mrm{lat}}
\nc{\mrk}{\mrm{mrk}}%
\nc{\FFmrk}{\mathcal{FF}\mrm{lat}^{\mrk}}
\nc{\FFnu}{\mathcal{FF}\mrm{lat}^{\mrm{nu}}}
\nc{\FFbas}{\mathcal{FF}\mrm{lat}^{\mrm{bas}}}
\nc{\FQSM}{\mathcal{FQS}\mathrm{m}}
\nc{\Quot}{\mathrm{Quot}}    
\nc{\COH}{\mathcal{C}\mathrm{oh}}
\let\phi\varphi

\let\emptyset\varnothing

\nc{\robber}{\mathcal{R}}%
\nc{\mv}{\mrm{mv}}
\nc{\const}{\mrm{const}}
\nc{\robbermv}{\robber^{\mv}}
\nc{\robberconst}{\robber^{\const}}
\nc{\robbernot}{\robber_0}%
\nc{\sectionmv}{i^{\mv}}%
\nc{\sectionconst}{i^{\const}}%
\nc{\sectionnot}{i}%
\nc{\st}{\mathrm{st}}

\nc{\Cofib}{\on{Cofib}}
\nc{\Fib}{\on{Fib}}
\nc{\initial}{\varnothing}
\nc{\op}{\mathrm{op}}
\nc{\bglt}{\mathrm{Tr}}
\nc{\ret}{\text{r{\'e}t}}
\nc{\adeg}{\mathrm{deg}^{\A^1}}
\nc{\aged}{\mathrm{ged}^{\A^1}}


\nc{\inftyCat}{\term{$\infty$-category}}
\nc{\inftyCats}{\term{$\infty$-categories}}

\nc{\inftyOneCat}{\term{$(\infty,1)$-category}}
\nc{\inftyOneCats}{\term{$(\infty,1)$-categories}}

\nc{\inftyGrpd}{\term{$\infty$-groupoid}}
\nc{\inftyGrpds}{\term{$\infty$-groupoids}}

\nc{\inftyTop}{\term{$\infty$-topos}}
\nc{\inftyTops}{\term{$\infty$-toposes}}

\nc{\inftyTwoCat}{\term{$(\infty,2)$-category}}
\nc{\inftyTwoCats}{\term{$(\infty,2)$-categories}}

\newcommand{\Witt}{\mathbf{W}}
\newcommand{\slicecomp}{\mathrm{sc}}
\newcommand{\s}{\mathrm{s}}
\newcommand{\f}{\mathrm{f}}
\newcommand{\Ext}{{\operatorname{Ext}}}
\newcommand{\MU}{\mathrm{MU}}
\newcommand{\BP}{\mathrm{BP}}
\newcommand{\comp}{{{\kern -.5pt}\wedge}}

\title{The Motivic Adams Conjecture}

\author[A. Ananyevskiy]{Alexey Ananyevskiy}
\address{LMU M\"unchen \\ Mathematisches Institut \\ Theresienstr. 39 \\ 80333 M\"unchen \\ Germany}
\email{\href{mailto:alseang@gmail.com}{alseang@gmail.com}}

\author[E. Elmanto]{Elden Elmanto}
\address{University of Toronto\\
	Department of Mathematics \\ 
        Toronto, Ontario, M5S 2E4\\ Canada}
\email{\href{mailto:elden.elmanto@utoronto.ca}{elden.elmanto@utoronto.ca}}

\author[O. R\"ondigs]{Oliver R\"ondigs}
\address{Universit\"at Osnabr\"uck \\ Institut f\"ur Mathematik \\ 49069 Osnabr\"uck \\ Germany\\ \&  Universitetet i Oslo \\ Matematisk Institutt \\ 0316 Oslo \\ Norway}
\email{\href{mailto:oliver.roendigs@uni-osnabrueck.de}{oliver.roendigs@uni-osnabrueck.de}}

\author[M. Yakerson]{Maria Yakerson}
\address{CNRS \\ Sorbonne Universit\'e \\ 4 place Jussieu\\ 75005 Paris, France} 

\email{\href{mailto:yakerson@imj-prg.fr}{yakerson@imj-prg.fr}}

\date{November 19th, 2024}

\marginparwidth=2.2cm
\marginparsep=0.2cm


\setcounter{tocdepth}{1}    

\begin{document}

\begin{abstract} We solve a motivic version of the Adams conjecture with the exponential characteristic of the base field inverted. In the way of the proof we obtain a motivic version of mod $k$ Dold theorem and give a motivic version of Brown's trick studying the homogeneous variety $(N_{\GL_r} T)\backslash\GL_r$ which turns out to be not stably $\A^1$-connected. We also show that the higher motivic stable stems are of bounded torsion.
\end{abstract}

\maketitle

\section{Introduction}
\label{sec:introduction}

        Adams' conjecture is a statement on the stable fibrewise equivalence of certain spherical fibrations associated to virtual real vector bundles over finite cell complexes.
        He formulated it as \cite[Conjecture 1.2]{adams.j1} with a clear goal in mind.
        George Whitehead's so-called $J$-homomorphism associates to a homotopy class $[f]$ of a continuous based function $f\colon S^r \to \mathrm{SO}(d)$ an element $J(f)\in \pi_{r+d}S^d$ involving the Hopf construction.
        It does not depend on $d$ if $d>r+1$, and can then be viewed as a homomorphism from the stable homotopy groups $\pi_r \mathrm{SO}(\infty)$ of the topological real $K$-theory spectrum to the stable homotopy groups $\pi_r\mathbb{S}$ of spheres.
        Adams succeeded to determine the image of this homomorphism in the series \cite{adams.j1,adams.j2,adams.j3,adams.j4} for all degrees $r\not\equiv 7(8)$, but only up to a factor of two if $r \equiv 7 (8)$; see \cite[Theorem 1.6]{adams.j4}.
        The validity of Adams conjecture removes this factor, and in particular shows that the image of the stable $J$-homomorphism is always a direct summand.
        
        Adams conjecture was proven in different ways in \cite{quillen.adams,sullivan.adams,quillen.adams2,friedlander,becker-gottlieb.adams,Brown73}.
        While Adams' series of articles on the $J$-homomorphism can be viewed as a systematic account on the production of a certain (nowadays called $v_1$-periodic) periodic family in the stable homotopy groups of spheres (itself a very prominent topic in algebraic topology), the solutions of Adams conjecture gave rise to amazing developments in mathematics --- the idea of localizations and completions of spaces, \'etale homotopy theory and the Becker-Gottlieb transfer, to name just a few.
        Perhaps none as striking as Quillen's computation of the algebraic $K$-theory of finite fields, itself a cornerstone of the creation of higher algebraic $K$-theory; see \cite[p.~140]{adams.infinite}.
        Higher algebraic $K$-theory constitutes a major motivating example for the $\A^1$-homotopy theory of Morel-Voevodsky started in \cite{MV}, which brings us to our current subject.

        Any vector bundle $\sE\to S$ over a Noetherian scheme gives rise to a ``spherical'' bundle $\sE-z(S) \to S$ by removing its zero section.
        Up to a simplicial suspension, this spherical bundle coincides with the Thom space $\Th(\sE):=\sE/\sE-z(S)$ of $\sE\to S$. 
        Viewing the Thom space as a $\wedge$-invertible object in the motivic stable homotopy category $\SH(S)$, and hence as a valid stable form of the spherical bundle $\sE-z(S)\to S$, leads to the following direct motivic analog of the complex Adams conjecture.
        Its formulation involves Adams operations $\psi^k$ on (virtual) vector bundles, as in the topological situation.

	\begin{thm*}[Theorems~\ref{thm:Adams} and~\ref{thm:Adams_at_char}]
		Let $\sE$ be a vector bundle over a regular scheme $S$ over a field $F$ of exponential characteristic $e$, and let $k\in \Z$ be an integer. Then for some $N\in \N_0$ one has $ \Th(k^N\otimes \sE) \cong \Th(k^N\otimes \psi^k \sE)$ in $\SH(S)[\tfrac{1}{e}]$. If $k$ is a power of $e$ then such an isomorphism exists in $\SH(S)$.
	\end{thm*}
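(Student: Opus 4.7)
The plan is to adapt Brown's proof of the classical Adams conjecture \cite{Brown73} to the motivic setting, using the three ingredients highlighted in the abstract: the motivic mod $k$ Dold theorem, a motivic Brown trick involving the homogeneous variety $(N_{\GL_r}T)\backslash \GL_r$, and the bounded-torsion statement for the higher motivic stable stems.

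First I would reduce to the universal case. Since $\Th$ sends direct sums of vector bundles to smash products and is compatible with pullback, it suffices to establish the isomorphism for the tautological rank-$r$ bundle $\gamma_r$ over a suitable motivic model of $\BGL_r$; the statement for an arbitrary pair $(\sE,S)$ then follows by pulling back along the classifying map of $\sE$ and smashing with the Thom spectrum of a trivial bundle to implement the tensoring with $k^N$.

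The heart of the argument is a motivic Brown trick applied to the tower $\mathrm{B}T \to \mathrm{B}(N_{\GL_r}T) \to \BGL_r$, where $T\subset \GL_r$ is a maximal torus. On $\mathrm{B}T$ the universal bundle splits as a direct sum of line bundles, and $\psi^k$ acts on each line bundle $L$ by $L\mapsto L^{\otimes k}$; the Thom spectra of $k^N\otimes \gamma_r$ and $k^N\otimes \psi^k \gamma_r$ are therefore directly comparable on $\mathrm{B}T$ via an explicit geometric map which is the identity modulo $k$-torsion on motivic invariants. To transport this comparison down to $\BGL_r$, I would use a Becker--Gottlieb-type motivic transfer attached to the finite map $\mathrm{B}(N_{\GL_r}T)\to \BGL_r$. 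Classically the transfer--projection composition is the identity because the fibre $(N_{\GL_r}T)\backslash \GL_r$ has Euler characteristic one; in the motivic setting, however, this variety fails to be stably $\A^1$-connected, so one must carry out the motivic Euler-class / transfer computation by hand in spite of this failure. This step will be the main obstacle and is precisely where the paper's study of $(N_{\GL_r}T)\backslash \GL_r$ enters.

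Having produced a self-map of the relevant Thom spectrum that is the identity modulo $k$-torsion, I would invoke the motivic mod $k$ Dold theorem to upgrade it, after inverting the primes dividing $e$, to an honest stable $\A^1$-equivalence; the bounded-torsion theorem for the higher motivic stable stems is then used to guarantee that a single uniform exponent $N$ suffices (rather than an $N$ that grows through the Postnikov tower). The case $k=e^m$ is handled integrally rather than after inverting $e$: in exponential characteristic $e$ the operation $\psi^{e^m}$ essentially arises from a Frobenius lift, and the Dold-theorem step can be sharpened so that no $e$-primary inversion is needed, yielding an isomorphism directly in $\SH(S)$.
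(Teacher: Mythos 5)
Your high-level outline correctly identifies the three themes advertised in the abstract, but the logical architecture of your argument differs from the paper's in ways that matter, and in one place you rely on an ingredient the paper deliberately does not use.

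\textbf{The bounded-torsion theorem is not part of the proof.} The paper states explicitly that, while Brown's original argument uses Serre's finiteness of the higher stable stems, ``our argument proceeds in a slightly different manner and avoids this step'' --- Theorem~\ref{thm:bounded_exponent} is proved only for independent interest and appears in an appendix. Your proposal leans on it to obtain a uniform exponent $N$. In the paper the exponent $N$ is produced by the Noetherian induction inside the proof of the mod~$k$ Dold theorem (Lemma~\ref{lem:mod_k_Dold}), not by any torsion bound on homotopy groups.

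\textbf{The order of Dold and transfer is reversed.} You propose first producing over $\mathrm{B}T$ (or a surrogate) a self-map of Thom spectra that is ``the identity modulo $k$-torsion,'' transferring it down, and only then applying the mod~$k$ Dold theorem on the base. The paper does the opposite and for good reason. It first reduces to line bundles on an \'etale cover $Y\to X$ (Lemma~\ref{lem:Adams_away_reduced}), invokes the mod~$k$ Dold theorem \emph{there} to produce a genuine equivalence $\Th(k^N\otimes f^*\sE)\cong\Th(k^N\otimes\psi^k f^*\sE)$ in $\SH(X)$, and only then applies the Becker--Gottlieb--Hoyois transfer for $f\colon X=(N_{\GL_r}T)\backslash Y\to S$. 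Lemma~\ref{lem:Brown_trick_adhoc} then says: if $\hat\phi$ is adjoint to an \emph{isomorphism}, the composite $\hat\phi\circ\bglt_f$ is an isomorphism. Your scheme would need a degree-tracking variant of this (``if $\hat\phi$ is adjoint to a map of $\A^1$-degree $k$, then $\hat\phi\circ\bglt_f$ has controlled degree''), which is not available and is much less plausible, since over a non-algebraically-closed field degree lives in $\GW$ rather than $\Z$ and the relevant Euler characteristics are not invertible integers but Witt-theoretic quantities. That is precisely the subtlety Lemma~\ref{lem:Brown_trick_adhoc} navigates via the two realizations (rank and signature) together with Lemma~\ref{lem:quadratic_invertible}.

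\textbf{The reduction is geometric, not via classifying spaces.} The paper does not pass to $\BGL_r$ and $\mathrm{B}T$. Instead it forms the Zariski-locally-trivial $\GL_r$-torsor $Y\to S$ classified by $\sE$, sets $X=(N_{\GL_r}T)\backslash Y$, and pulls back along the smooth fiber bundle $f\colon X\to S$ with fiber $(N_{\GL_r}T)\backslash\GL_r$. This keeps everything inside $\Sm_S$, which is essential because the Becker--Gottlieb--Hoyois transfer (Definition~\ref{defn:bglt}) is set up for Nisnevich-locally-trivial fiber bundles of smooth schemes (relying on strong dualizability after inverting $e$ via~\cite{elden-adeel-perfection}); a reduction through the ind-scheme $\BGL_r$ would have to justify the transfer step in a different, not obviously available, framework. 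Your observation that $(N_{\GL_r}T)\backslash\GL_r$ is not stably $\A^1$-connected is correct and is exactly what makes Lemma~\ref{lem:Brown_trick_adhoc} delicate (see Remark~\ref{rem:nonconnected}), but that obstacle is precisely why the argument is done via the realization functors rather than via an Euler-class computation ``by hand.''

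\textbf{The characteristic-$p$ case.} Your suggestion to use the Frobenius for $k=\pm p^n$ matches Theorem~\ref{thm:Adams_at_char}, and that part of the proposal is sound: the Frobenius gives the explicit map of $\A^1$-degree $p^r_\epsilon$, and the mod~$k$ Dold theorem then produces the integral isomorphism without the transfer step.

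In sum, the proposal captures the headline ingredients but misidentifies the role of the bounded-torsion theorem, reverses the dependency between the Dold theorem and the transfer, and would face a real gap at the transfer step because a ``mod~$k$'' version of Brown's trick is not available in the motivic setting.
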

	\noindent The same conclusion holds also for a singular scheme, provided that it admits a Jouanolou device; see Corollary~\ref{cor:singular}.
	
The Thom spectrum construction descends to a homomorphism $J\!:=\pi_0\! \Th \colon \!K_0(S) \rightarrow  \pi_0 \mathbf{Pic} \bigl(\SH(S)[\tfrac{1}{e}]\bigr)$ of abelian groups, thus the isomorphism in the Theorem above can be rephrased as an equality 
\begin{equation}\label{eq:adams}
k^NJ( \sV - \psi^k(\sV)) = 0 
\end{equation}
in the target abelian group, for any virtual vector bundle $\sV$ over $S$.
This equality appears in the familiar form of the Adams conjecture from algebraic topology.

\begin{rem}[Complex Betti realization and Adams conjecture] The complex Adams conjecture from algebraic topology can be deduced from the motivic Adams conjecture over the complex numbers in the following way. To obtain the complex Adams conjecture in algebraic topology, it suffices to prove the complex Adams conjecture over universal examples, that is, Grassmannians classifying complex vector bundles. More precisely, it suffices to prove the complex Adams conjecture over the topological space $\mathrm{Gr}_{\mathbb{C}}^{\mathrm{top}}(r,d)$ classifying rank $r$ quotients of trivial rank $d$ topological $\mathbb{C}$-vector bundles. This topological space is the analytification of the smooth projective $\mathbb{C}$-variety $\mathrm{Gr}_{\mathbb{C}}(r,d)$. Thus the validity of Theorem~\ref{thm:Adams} for $S =\mathrm{Gr}_{\mathbb{C}}(r,d)$ and $F = \mathbb{C}$ implies the usual complex Adams conjecture after applying the complex Betti realization functor $\mathrm{Betti}\colon \SH(\mathbb{C}) \rightarrow \SH$. The reason is that the diagram
\[
\begin{tikzcd}
K_0(\mathrm{Gr}_{\mathbb{C}}(r,d)) \ar{r} \ar[swap]{d}{\mathrm{Betti}} & \pi_0\mathbf{Pic} \bigl(\SH(\mathrm{Gr}_{\mathbb{C}}(r,d)\bigr) \ar{d}{\mathrm{Betti}}\\
KU^0( \mathrm{Gr}_{\mathbb{C}}^{\mathrm{top}}(r,d)) \ar{r} & \mathrm{Sph}\bigl(\SH(\mathrm{Gr}_{\mathbb{C}}^{\mathrm{top}}(r,d)\bigr)
\end{tikzcd}
\]
whose vertical arrows are induced by the complex Betti realization functor commutes. Here the terminal corner $\mathrm{Sph}(\SH(\mathrm{Gr}_{\mathbb{C}}^{\mathrm{top}}(r,d))$ is the monoid of spherical fibrations over $\mathrm{Gr}_{\mathbb{C}}^{\mathrm{top}}(r,d)$. The identity~\eqref{eq:adams} holds in the target of the top horizontal map. Since the Betti realization homomorphism on the left hand side is surjective (even an isomorphism), the identity holds in the target of the bottom horizontal map as well.
\end{rem}

\begin{rem}[The real motivic Adams conjecture] As stated in the introduction, the (more difficult) real Adams conjecture in topology addresses all real vector bundles, which are classified by classifying spaces of orthogonal groups and real topological K-theory. A motivic analog over a base scheme $S$ would involve the Hermitian $K$-theory of $S$ and a real analog of the $J$-homomorphism. We expect to address this in a sequel. 
\end{rem}

        At this point, the (perhaps too lengthy) historical introduction of the topological Adams conjecture must raise the question of applicability of this theorem in the context of computations of stable homotopy groups of motivic spheres.
        This, as well as other applications such as James periodicity for truncated projective spaces, will not be addressed in this paper, but instead in future work. 
        For now, we just offer a rather straightforward exercise to indicate how Theorem~\ref{thm:Adams} applies to torsion order questions of elements in stable homotopy groups of motivic spheres.
        The reader may check that the element $\nu\in \pi_{3,2}\1_F$ given by the second Hopf map $S^{7,4}\to S^{4,2}$ from the Hopf construction on $\SL_2=S^{3,2}$ is in the image of the motivic $J$-homomorphism
        \[ \Th\colon K_0(S^{4,2}) \to \pi_0\mathbf{Pic}(\SH(S^{4,2})) \]
        using a smooth affine quadric of dimension four as a model for the sphere $S^{4,2}$ \cite[Definition 4.7]{dugger-isaksen.hopf}. 
        In this case, the source $K_0(S^{4,2})$ contains a free abelian group of virtual rank zero bundles whose generator hits $\nu\in \pi_{3,2}\1_F$, with the latter group contained as a direct summand in the target.
        Theorem~\ref{thm:Adams} implies that the image $\nu$ must be torsion.
        With further work, the order can be determined as $24$ over fields of characteristic not two and not three, which fits with the computation in \cite[Theorem 5.5]{rso.oneline}.
        The situation for the next Hopf map $\sigma\colon S^{15,8}\to S^{8,4}$ and its surprising order in $\pi_{7,4}\1_{\mathbb{R}}$ observed in \cite{dugger-isaksen.real} will be discussed in future work.
        
        The reader with some experience in computations of motivic stable homotopy groups of spheres must have encountered several papers in which a candidate motivic spectrum for the image of the motivic $J$-homomorphism is proposed and its homotopy groups or sheaves are computed, such as \cite{bachmann-hopkins, bik, kong-quigley, boq}.
        While on its second page \cite{bik} explicitly asks about the motivic Adams conjecture because of a certain insecurity regarding the correctness of their candidate over the real numbers, they ``make no attempt to study these more geometric issues''.
        These issues are studied here, at least up to inverting the characteristic of the base field if it is positive.
	Regarding this slight defect, note that if $k$ is a power of the positive characteristic of $F$, then one may use the Frobenius homomorphism  to avoid the inversion of the characteristic; see Theorem~\ref{thm:Adams_at_char} for the details. Furthermore, if one shows that the motivic suspension spectrum $\Sigma_\T^\infty ((N_{\GL_r} T)\backslash \GL_r)_+$ of the homogeneous variety with respect to the normalizer of the standard maximal torus  $T\le \GL_r$ is strongly dualizable in $\SH(F)$, then the conclusion of Theorem~\ref{thm:Adams} also holds without inversion of the characteristic of $F$ with the same proof.
	
	The proof of Theorem~\ref{thm:Adams} consists of the following steps.

	\textbf{(1)} Reduction to the case where the vector bundle $\sE\to X$ has the normalizer $N_{\GL_r} T$ of the standard maximal torus as the structure group.
        For this we use motivic Becker--Gottlieb transfers introduced by Levine~\cite{levine.euler} -- with \cite{HoyoisGLV} as a precursor -- and a motivic version of Brown's trick \cite{Brown73} (see e.g. \cite[Proposition~5.1]{Ebert} for an exposition of Brown's approach to the Adams conjecture).
        In order to transplant Brown's trick to the motivic realm, we study the Becker--Gottlieb--Hoyois transfer for the variety $(N_{\GL_r} T)\backslash \GL_r$; see Lemma~\ref{lem:Brown_trick_adhoc}, which surprisingly turns out to be quite intricate. In particular, this variety is not stably $\A^1$-connected; see Remark~\ref{rem:nonconnected}.
        Perhaps another surprise to the reader may be that we transfer the ``most topological'' of the proofs of the classical Adams conjecture to the motivic situation, although there are more algebro-geometric approaches among those listed above.

	\textbf{(2)} Reduction to the case of a line bundle. This is accomplished via \'etale transfers in the motivic stable homotopy theory; see Lemma~\ref{lem:Adams_away_reduced}. It uses the observation that a vector bundle with the structure group  $N_{\GL_r} T$ is the direct image of a line bundle along a suitable Galois cover. 
	
	\textbf{(3)} The case of a line bundle. Here we explicitly construct morphisms $\Th(\sL)\to \Th(\sL^{\otimes k})$ for $k$ odd and $\Th(k\otimes \sL)\to \Th(k\otimes \sL^{\otimes k})$ for $k$  even of particular $\A^1$-degrees, based on the morphisms of motivic spheres considered in Lemma~\ref{lem:local_degree}.
        To these we apply the following motivic version of the mod $k$ Dold theorem.
	\begin{thm*}[Theorem~\ref{thm:modkDold}]
		Let $S$ be a connected regular scheme over a field $F$, let $\sE_1, \sE_2$ be vector bundles over $S$ of the same rank and $k\in \mathbb{Z}$ . Suppose that there exists a morphism $\phi\in \Hom_{\SH(S)}( \Th(\sE_1), \Th(\sE_2))$ of $\A^1$-degree $k\in \Z\subseteq \GW(F(S))$. Then for some $N\in \N_0$ one has $\Th(k^N\otimes \sE_1) \cong \Th(k^N\otimes \sE_2)$ in $\SH(S)$.
	\end{thm*}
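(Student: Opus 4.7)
The plan is to reformulate the statement as a condition in the Picard group of $\SH(S)$ and then exploit iteration together with the bounded-torsion property of higher motivic stable stems (stated in the abstract and, presumably, established earlier in the paper).

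First, I would normalize the setup. Since $\sE_1$ and $\sE_2$ have the same rank, the object $L := \Th(\sE_2) \otimes \Th(\sE_1)^{\otimes -1}$ is $\otimes$-invertible in $\SH(S)$, and $\phi$ corresponds to a morphism $\psi \colon \1_S \to L$. Over the generic point $\eta = \Spec F(S)$, any two vector bundles of the same rank are trivial, so $L_\eta \cong \1_\eta$ and $\psi_\eta$ becomes literally multiplication by $k$ on $\1_\eta$. The conclusion to prove is therefore equivalent to $L^{\otimes k^N} \cong \1_S$ in $\SH(S)$ for some $N$, i.e.\ the vanishing of $k^N [L]$ in $\pi_0\mathbf{Pic}(\SH(S))$.

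Second, I would iterate $\psi$ and analyze cofibers. The octahedral axiom applied to the factorization
\[
\psi^{\otimes n} \colon \1_S \xrightarrow{\psi} L \xrightarrow{\psi \otimes \id_{L}} L^{\otimes 2} \to \cdots \to L^{\otimes n}
\]
produces a finite filtration on $\cofib(\psi^{\otimes n})$ whose successive subquotients are $L^{\otimes i} \otimes C$ for $0 \le i < n$, where $C := \cofib(\psi)$ is a ``twisted Moore object'' with $C_\eta \simeq \1_\eta / k$. Since $L$ is invertible, tensoring with $L$ preserves any torsion annihilator.

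Third, I would combine the bounded-torsion theorem for higher motivic stable stems with a spreading-out argument. Because $\psi_\eta$ is an equivalence after inverting $k$ and because $\sE_1,\sE_2$ have the same rank, the map $\psi$ becomes an equivalence on some dense open $U \subseteq S$; hence $C$ is ``supported'' on the closed complement. Using regularity of $S$ and descent along a Nisnevich cover adapted to this stratification, the torsion of $C$ gets controlled by the torsion of motivic stable stems over finitely many residue fields, which is bounded. This produces an integer $M$ with $k^M \cdot \id_C = 0$, and the filtration from the previous step then gives $k^{nM} \cdot \id_{\cofib(\psi^{\otimes n})} = 0$ for every $n \ge 1$.

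The main obstacle is the final step: converting ``a power of $k$ annihilates the identity of $\cofib(\psi^{\otimes n})$'' into the cleaner conclusion ``$L^{\otimes k^N} \cong \1_S$''. I expect to handle this via a motivic analog of Dold's classical trick --- splitting the $k^M$-multiplication on a Moore spectrum $\1/k^{M}$ through a carefully chosen nullhomotopy --- which, applied inductively to the filtration quotients, allows one to promote the annihilator to an outright isomorphism $\1_S \isoto L^{\otimes k^N}$ for some large $N$. An alternative route is a dévissage on the codimension of the support of $C$: on $U$, $L|_U \cong \1_U$ directly, and the Picard-theoretic obstruction to extending this trivialization across a stratum of codimension $c$ lies in a group whose $k$-torsion is bounded by the stable-stem input, so a finite induction on the finitely many strata of $S$ yields the desired $N$.
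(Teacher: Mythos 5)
You take a genuinely different route from the paper, and unfortunately it has several gaps that I believe are fatal rather than cosmetic.

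First, and most importantly, your strategy leans on the bounded-torsion theorem for higher motivic stable stems (the paper's Theorem~\ref{thm:bounded_exponent}). That theorem holds only after inverting the exponential characteristic $e$, whereas Theorem~\ref{thm:modkDold} is an integral statement in $\SH(S)$, with no inversion. The paper's introduction even flags this explicitly: Brown's original proof used Serre finiteness, but the argument here ``proceeds in a slightly different manner and avoids this step.'' The actual mechanism replacing Serre finiteness is a nilpotence statement (Corollary~\ref{cor:kernel_nilpotent}, resting on Morel's connectivity theorem), together with control of boundary terms in localization sequences via the $2$-power torsion of Witt rings (Lemmas~\ref{lem:local_extension1} and~\ref{lem:local_extension2}). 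This is what allows a characteristic-free statement. Any route that feeds bounded torsion of $\pi_{s+(w)}\1_F[\tfrac{1}{e}]$ into the argument can at best reproduce a version of Theorem~\ref{thm:modkDold} after inverting $e$.

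Second, your step (c) contains a genuine logical gap: knowing that $k^{nM}$ annihilates the identity of $\cofib(\psi^{\otimes n})$ does not imply $\psi^{\otimes n}$ is an equivalence, nor that $L^{\otimes k^N}\cong\1_S$ for large $N$. (Multiplication by $2$ on the sphere spectrum has $4$-torsion cofiber and is nowhere near an equivalence.) In Adams' classical mod $k$ Dold theorem the mechanism is not ``the cofiber is torsion,'' but rather that after iterating $\phi$ the relevant obstruction class becomes divisible by a large power of $k$, and then bounded torsion of the obstruction group kills it. Your filtration on $\cofib(\psi^{\otimes n})$ tracks torsion of the cofiber but does not produce the $k$-divisibility of the obstruction to invertibility, which is the heart of Dold's trick. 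In the paper this divisibility is produced by replacing $\alpha$ with its powers $\alpha^{K^m}$ (and correspondingly $\Vv$ with $K^m\otimes\Vv$) before confronting each boundary term; see Lemmas~\ref{lm:powerkernel}, \ref{lm:power_partial_zero}, \ref{lm:strictify_restrictions} and the open-cover induction in Lemma~\ref{lem:mod_k_Dold}.

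Third, the assertion that $C=\cofib(\psi)$ is ``supported'' on the closed complement of $U$ is not correct as stated. You yourself note $C|_U$ is a twisted Moore object; that is nonzero (indeed nonzero at every point). So there is no closed subset on which $C$ is honestly supported, and the proposed Nisnevich-descent-plus-stratification reduction to residue fields does not get off the ground in the form written. The paper instead works directly with elements of $\1^0(-;\Vv)$ over an increasing chain of opens, and the obstructions to extension across a new generic point of the boundary are analyzed via the localization sequence and the Cousin/Gersten complex (Lemma~\ref{lem:Gersten}), not via a support argument on the cofiber spectrum.

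Finally, the reformulation in your first step ($\Th(k^N\otimes\sE_i)\cong\Th(\sE_i)^{\otimes k^N}$, so the goal is $L^{\otimes k^N}\cong\1_S$) is fine and matches the paper, as is the observation that $\psi_\xi$ is, up to a unit, multiplication by $k$. These are the right starting points, and your intuition that one must iterate and look for bounded torsion is in the right spirit historically. But to close the argument you would have to (a) replace the appeal to Theorem~\ref{thm:bounded_exponent} by something integral (the paper uses Corollary~\ref{cor:kernel_nilpotent} and the Witt ring torsion), and (b) replace the cofiber-torsion step by an actual obstruction-theoretic iteration that makes the obstruction $k$-divisible before invoking any torsion bound, as in Lemmas~\ref{lm:power_partial_zero} and~\ref{lm:strictify_restrictions}.
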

	\noindent
	The proof of the mod $k$ Dold theorem is in turn based on the observation that two Thom spaces over connected regular $F$-schemes are stably isomorphic if and only if there exists a morphism between them such that its $\A^1$-degree at the generic point is invertible.
        Such an isomorphism clearly exists over an open subset of the base (e.g. over which both Thom spaces are trivial) and we iteratively extend this isomorphism to the whole base taking additive multiples of the considered vector bundles to eliminate the arising obstructions.
        This requires a delicate analysis of the arising localization sequences and the following ingredient.
	\begin{lem*}[Lemma~\ref{lm:kernel_nilpotent}]
		Let $S$ be a regular scheme over a field $F$, $A\in\SH(F)$ be a commutative ring spectrum, $\Vv$ be a virtual vector bundle over $S$, $U\subseteq S$ be a dense open subset and $\alpha\in A^{p}(S;\Vv)$ be such that $\alpha|_U=0$ (see Definition~\ref{def:twist} for the notation). Then there exists $n\in \N$ such that $\alpha^n=0$.
	\end{lem*}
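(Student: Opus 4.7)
The plan is to prove this by induction on $d_0 := \dim(S\setminus U)$, implicitly assuming the standard convention that $S$ is Noetherian of finite Krull dimension. I would formulate the inductive hypothesis so that it applies to every regular $F$-scheme $T$, every virtual bundle, every class, and every dense open $W\subseteq T$ satisfying $\dim(T\setminus W) < d_0$. The base case $d_0=-1$ is immediate: then $U=S$ and $\alpha=\alpha|_U=0$.

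For the inductive step, I would first set $Z := (S\setminus U)_{\mathrm{red}}$ and, using generic regularity for excellent reduced schemes (after passing to the perfect hull of $F$ if needed), choose a dense open $Z_0\subseteq Z$ for which $i\colon Z_0\hookrightarrow U':=S\setminus(Z\setminus Z_0)$ is a regular closed immersion with normal bundle $N:=N_{Z_0/U'}$ of positive rank. Put $Z_1 := Z\setminus Z_0$, so that $\dim Z_1 < d_0$ and $S\setminus U' = Z_1$. Since $\alpha|_{U'\setminus Z_0}=\alpha|_U=0$, the localization triangle lifts $\alpha|_{U'}$ to some $\tilde\alpha\in A^p_{Z_0}(U';\Vv|_{U'})$, and the Gysin isomorphism identifies $\tilde\alpha$ with a class $\beta\in A^p(Z_0;\Vv|_{Z_0}-N)$ satisfying $\alpha|_{U'}=i_*\beta$.

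Next, an induction combining the projection formula $i_*\beta\cdot\gamma = i_*(\beta\cdot i^*\gamma)$ with the self-intersection identity $i^*i_*(-)=e(N)\cdot(-)$ gives, for every $n\ge 1$,
\[
(\alpha|_{U'})^n = i_*\bigl(\beta^n\cdot e(N)^{n-1}\bigr).
\]
The key observation I would then exploit is that the Euler class $e(N)$ vanishes after restriction to any Zariski open $W\subseteq Z_0$ on which $N$ trivializes, because a trivial bundle of positive rank admits a nowhere-vanishing section, $\A^1$-homotopic to the zero section, forcing its Thom class to be null. Such a dense $W$ exists because vector bundles on regular Noetherian schemes trivialize on a dense open. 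Because $\dim(Z_0\setminus W)<\dim Z_0\le d_0$, the inductive hypothesis applies to $e(N)$ on $Z_0$ and yields $m\in\N$ with $e(N)^m=0$, whence $\alpha^{m+1}|_{U'}=i_*(\beta^{m+1}e(N)^m)=0$. A second application of the inductive hypothesis, now to $\alpha^{m+1}$ on $S$ with dense open $U'$ whose complement $Z_1$ has dimension $<d_0$, then furnishes $k\in\N$ with $\alpha^{(m+1)k}=0$, closing the induction.

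The main obstacles I expect are technical rather than conceptual: assembling the Gysin isomorphism, the projection formula, and the self-intersection identity for an arbitrary commutative ring spectrum $A\in\SH(F)$ with arbitrary virtual bundle twists (standard in the six-functor formalism of Ayoub, Khan, and D\'eglise--Jin--Khan, but demanding careful bookkeeping); and the reduction to a regular dense stratum $Z_0\subseteq Z$, which uses $F$-excellence of $S$ together with the perfect-hull trick to handle imperfect base fields.
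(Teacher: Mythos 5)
Your proof is correct in outline, but it takes a genuinely different route from the paper's. The paper argues as follows: it first reduces by Mayer--Vietoris induction to $S$ affine, then by Popescu's theorem and continuity to $S$ smooth over a perfect field, and then shows directly that $\alpha^n=0$ for $n\gg 0$ by exhibiting the $n$-fold product as a composite factoring (after applying $f_\sharp$) through an object of the form $\Sigma^\infty_\T(\sE^{\times n}/(\sE^{\times n}-\Delta_Z(Z)))$, which is highly connective by Morel's connectivity theorem; the vanishing then follows from the coniveau spectral sequence once the connectivity exceeds the dimension of the source. Your argument instead inducts on $\dim(S\setminus U)$ and reduces nilpotence of $\alpha$ to nilpotence of the Euler class $e(N_{Z_0/U'})$ via the Gysin pushforward, projection formula, and self-intersection formula $(i_\ast\beta)^n = i_\ast(\beta^n\, e(N)^{n-1})$, with the Euler class of a positive-rank bundle shown to die on a dense open by the nowhere-vanishing-section homotopy. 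This is a cleaner, more geometric intersection-theoretic argument; it replaces Morel's connectivity bound with the excess-intersection identity (which itself sits in the $\mathrm{DJK}$ fundamental-classes framework and must be checked to hold with the right twists for an arbitrary commutative ring spectrum and arbitrary virtual bundle, as you correctly anticipate).

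One point that deserves more care than your sketch gives it is the reduction making generic regularity available. A Noetherian regular scheme over a field need not be excellent, and base change to the perfect hull can destroy regularity when $F$ is imperfect; the paper instead \emph{replaces} $F$ by its prime subfield (always perfect, and regularity of $S$ is unaffected) and then invokes Popescu's theorem together with continuity of $\SH$ to reduce the affine case to $S$ smooth over a perfect field, where closed subschemes are of finite type and generic regularity is automatic. Your induction on $\dim(S\setminus U)$ would have to be combined with this preliminary reduction (e.g.\ Mayer--Vietoris to reduce to the affine case, then Popescu plus continuity, then the dimension induction) to secure the existence of a regular dense open $Z_0\subseteq Z_{\mathrm{red}}$ and hence the regular closed immersion you need; as phrased, your appeal to generic regularity for excellent schemes after passing to a perfect hull does not quite close this gap.
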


	\noindent The proof of this lemma is based on Morel's connectivity theorem~\cite[Theorem~6.4.1]{morel-connectivity}.

	In case of $k=p^{\pm n}$ being a power of the characteristic of the base field one may use the Frobenius homomorphism in place of the Brown's trick, effectively skipping the step \textbf{(1)} above. In particular, this allows to avoid inversion of the characteristic, see Theorem~\ref{thm:Adams_at_char} for the details.
	
	One of the ingredients in Brown's original approach to the proof of Adams conjecture is finiteness of the higher homotopy groups of the sphere spectrum obtained by Serre in his thesis. Although our argument proceeds in a slightly different manner and avoids this step, we prove the following motivic version of this finiteness property, which may be of independent interest.
	
	\begin{thm*}[Theorem~\ref{thm:bounded_exponent}]
		Let $F$ be a field of exponential characteristic $e$. Then for $0<s,w\in \bN$ there exists a natural number $N$ (depending only on $s$ and on $w$, and not on $F$) such that the abelian group $\pi_{s+(w)}\1_F[\tfrac{1}{e}]=\Hom_{\SH(F)}(\Sigma_{\mathrm t}^w \Sigma_{\mathrm{s}}^s \1_F, \1_F)\otimes_\Z \Z[\tfrac{1}{e}]$ is $N$-torsion.
	\end{thm*}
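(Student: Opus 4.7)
The plan is to combine strong convergence of the motivic slice spectral sequence for $\1_F[\tfrac{1}{e}]$ with the universal-in-$F$ description of its slices, and then apply Serre's finiteness theorem for topological stable stems.

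First I would invoke the convergence theorem for the slice filtration on $\1_F[\tfrac{1}{e}]$ (Levine's convergence theorem in characteristic zero, together with its extensions to positive characteristic after $e$-inversion by Pelaez, Bachmann and others): the filtration is strongly convergent in each bidegree, and only slices $s_q(\1_F)[\tfrac{1}{e}]$ with $q$ in an explicit finite range depending on $s$ and $w$ can contribute to $\pi_{s+(w)}\1_F[\tfrac{1}{e}]$. This reduces the task to controlling finitely many slice contributions.

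Next I would use the ``topological'' description of the slices: after inverting $e$, each slice $s_q(\1_F)[\tfrac{1}{e}]$ is equivalent, as an $H\mathbb{Z}[\tfrac{1}{e}]$-module, to a Tate-suspension of $H\mathbb{Z}[\tfrac{1}{e}]\wedge M_q$, where $M_q$ is a finite abelian group built from topological stable-stem data and depending only on $q$, not on $F$. By Serre's finiteness theorem, each $M_q$ has exponent bounded by a universal constant $N_q$. The entries of the $E_2$-page in the relevant range are then motivic cohomology groups $H^{p,q'}(F;M_q)$, which are $M_q$-modules and hence inherit exponent $\le N_q$, regardless of how large the underlying abelian group is (e.g.~when $M_q=\mathbb{Z}/\ell$ the group can be infinite, being Milnor K-theoretic, yet is $\ell$-torsion). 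Only finitely many such entries contribute to the bidegree $(s+w,w)$, and a standard filtration and extension argument then yields the desired uniform bound $N=N(s,w)$.

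The main obstacle is the second step: securing a universal-in-$F$ identification of the slices of $\1_F[\tfrac{1}{e}]$ together with the finiteness of the associated coefficient groups $M_q$. This requires the inversion of $e$ both for the convergence statement and for the topological form of the slices, which is precisely the hypothesis of the theorem; once this input is available, the remaining steps are essentially formal.
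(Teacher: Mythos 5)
Your proposal identifies the correct general direction (slice spectral sequence plus topological finiteness of the coefficient groups in the slices), but it has a genuine gap at the crucial step, which is precisely the hard part of the paper's proof.

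First, the slice spectral sequence for $\1_F$ converges to $\pi_\ast\slicecomp(\1_F)\simeq\pi_\ast\1_{F,\eta}^{\wedge}$, not to $\pi_\ast\1_F$. Your appeal to a ``strong convergence theorem'' for $\1_F[\tfrac{1}{e}]$ itself does not hold uniformly in $F$; Levine's convergence theorem and its variants require cohomological-dimension hypotheses on $F$, which would destroy the uniformity in $F$ that the theorem demands. The paper instead uses the $\eta$-arithmetic fracture square to split the problem into $\1^\wedge_\eta$, $\1[\eta^{-1}]$, and $\1^\wedge_\eta[\eta^{-1}]$, handling the $\eta$-inverted pieces via the Bachmann--Hopkins computation together with Serre finiteness; your proposal omits this step entirely.

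Second, and more seriously, your assertion that ``only finitely many entries of the $E_2$-page contribute to a given bidegree $(s+w,w)$'' is exactly what fails in general. The relevant column of the slice $E_1$-page is a priori \emph{infinite}: each slice $\s_q\1$ contributes a finite sum of motivic cohomology groups with finite coefficients, but these contributions persist for unboundedly many $q$. As the paper explains, the column does become finite at $E_2$ when $s\equiv 1,2\pmod 4$ (by the vanishing results of Ormsby--R\"ondigs--\O{}stv\ae{}r), but for $s\equiv -1,0\pmod 4$ finiteness of the column requires additional field hypotheses such as bounded Galois cohomological dimension of $F[\sqrt{-1}]$ at $2$. To obtain a field-independent bound the paper replaces the naive finiteness-of-column claim by an analysis of the $\alpha_1$-periodic tail, using Andrews--Miller's resolution of Zahler's conjecture (an explicit identification of $\Ext_{\MU_\ast\MU}^{\ast,\ast}(\MU_\ast,\MU_\ast)[\alpha_1^{-1}]$) and the Bachmann--Hopkins determination of $\pi_\ast\1[\eta^{-1}]$, showing the tail contributes a $2^a$-torsion subquotient of $\Witt(F)$ with $a$ depending only on $s$. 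Without this, there is no way to conclude a uniform exponent $N(s,w)$: your ``standard filtration and extension argument'' has infinitely many filtration steps, and the straightforward bound degenerates. So the proposal correctly sets up the framework but is missing the decisive input that bounds the infinite part of the column uniformly in $F$.
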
	
	\noindent This theorem is obtained combining the finiteness of stable stems in topology, delicate analysis of the slice spectral sequence for the motivic sphere spectrum and the recent results of Bachmann and Hopkins on the $\eta$-inverted motivic sphere spectrum \cite{bachmann-hopkins}.
	
	\textbf{Acknowledgements.} Alexey Ananyevskiy is supported by the DFG Heisenberg grant AN 1545/1-1 and the DFG research grant AN 1545/4-1. Oliver R\"ondigs is supported by the DFG project ``Algebraic bordism spectra'' and thanks Ben Williams for mentioning James periodicity, and Karl-Heinz Knapp for a copy of \cite{Brown73}. Elden Elmanto acknowledges support by an Erik Ellentuck fellowship while at the Insitute for Advanced Study and thanks Adeel Khan for conversations related to this project many years ago. Maria Yakerson is grateful to CNRS and Sorbonne Universit\'e for generous and supportive work conditions. We thank Toni Annala for interesting questions related to this project and Oscar Randal-Williams for the questions which eventually led to the improvement of the main results of the paper, almost allowing to remove the inversion of the exponential characteristic. We also thank the anonymous referees for the detailed reading of our manuscript and helpful remarks.

	Throughout the paper we employ the following assumptions and notations.
	
	\begin{tabular}{l|l}
	  $S$ & a scheme \\	
	  $\Sch_S, \Sm_S$ & the category of schemes, resp. smooth schemes, over $S$\\		  
	  & all schemes are assumed to be Noetherian\\		
	  $F$ & a field \\
	  $\Vv$ & a virtual vector bundle, that is a formal difference $\sE_1\ominus \sE_2$ of vector bundles \\	
  	  $n\otimes \Vv$ & $\sE_1^{\oplus n}\ominus \sE_2^{\oplus n}$ for $n\in \N_{\ge 0}$, resp. $\sE_2^{\oplus -n}\ominus \sE_1^{\oplus -n}$ for $n\in \Z_{<0}$, and $\Vv=\sE_1\ominus \sE_2$\\
	  $\SH(S)$ & the motivic stable homotopy category over $S$ \cite{MV,Voevodsky:1998}\\
	  $\1_S$ & the motivic sphere spectrum over the scheme $S$\\	
	  $\T$ & $\A^1/(\A^1-\{0\})$ considered as a pointed motivic space over some base scheme $S$\\
	  $\Th(\sE)$ & the Thom spectrum $\Sigma^\infty_\T (\sE/(\sE-S))\in \SH(S)$ of a vector bundle $\sE$ over $S$ \\
	  $\Sigma^{p}_{\mathrm{s}}$, $\Sigma^{p}_{\mathrm{t}}$, $\Sigma^{p}_\T$ & $p$-fold simplicial, $(\Gm,1)$ and $T$-suspensions respectively \\
	  $\GW(F)$ & the Grothendieck--Witt ring of nondegenerate symmetric bilinear forms over $F$
	\end{tabular}

	\section{Preliminaries on cohomology theories}\label{sec:prel-cohom-theor}
        This section contains several definitions, conventions, and results regarding cohomology theories represented by motivic spectra, that is, objects in the Morel-Voevodsky $\P^1$-stable $\A^1$-homotopy category $\SH(S)$.
        The main result is Corollary~\ref{cor:kernel_nilpotent}, a nilpotence statement for elements in a suitable cohomology theory over a connected regular scheme over a field restricting to zero on its generic point. 
        Base change is relevant for that statement, thus we are going to freely employ the six functor formalism on the motivic stable homotopy categories as in \cite{Ayoub, Ayoub2}.
	As more concise references, the reader may consider \cite[Section 2]{HoyoisGLV} or \cite{hoyois-sixops}. We briefly recall some properties and fix notation.
	
	\begin{defn}\label{def:six-functors}
	Let $f\colon Y\to X$ be a morphism of schemes. There is an adjoint pair of functors
	\[
	f^*\colon \SH(X) \leftrightarrows \SH(Y) \colon f_*.
	\]
	If $f$ is smooth, then one also has an adjoint pair of functors
	\[
	f_\sharp\colon \SH(Y) \leftrightarrows \SH(X) \colon f^*.
	\]
	We usually denote the corresponding units and counits of the adjunctions as
	\[
	\id_{\SH(X)} \xrightarrow{\rho} f_\ast f^\ast,\quad f^\ast f_\ast \xrightarrow{\omega} \id_{\SH(Y)},\quad 
	\id_{\SH(Y)} \xrightarrow{\lambda} f^\ast f_\sharp, \quad f_\sharp f^\ast\xrightarrow{\mu} \id_{\SH(X)}.
	\]
	The functor $f^\ast$ is strong symmetric monoidal, whence $f_\ast$ is lax symmetric monoidal and $f_\sharp$ is op-lax symmetric monoidal. In particular, $f^\ast \1_X\simeq \1_Y$ and for $A,B\in \SH(Y)$ there is a natural morphism
	\[
	\Delta \colon f_\sharp (A\wedge B) \to f_\sharp A\wedge f_\sharp B
	\]
	adjoint to the morphism
	\[
	\lambda_A \wedge \lambda_B\colon A\wedge B \to f^*(f_\sharp A\wedge f_\sharp B) \cong f^*f_\sharp A\wedge f^*f_\sharp B.
	\]
	For $A\in \SH(Y)$ and $B\in \SH(X)$ the projection formula \cite[Theorem~6.18(7)]{hoyois-sixops} yields an isomorphism $f_\sharp (A\wedge f^*B) \xrightarrow{\simeq}  f_\sharp A\wedge B$ that is adjoint to
	\[
	\lambda_A \wedge \id_{f^*B} \colon A\wedge f^*B \to f^*(f_\sharp A\wedge B) \cong f^*f_\sharp A\wedge f^*B
	\]
	If $B$ is $\wedge$-invertible, then there is also an isomorphism $f_\ast A\wedge B  \xrightarrow{\simeq} f_\ast(A\wedge f^*B) $ adjoint to
	\[
	\omega_A \wedge \id_{f^*B}\colon f^\ast f_* A\wedge f^*B \cong f^\ast ( f_* A\wedge B ) \xrightarrow{\simeq} A\wedge f^*B.
	\]
        \end{defn}	

        Among the many consequences of Ayoub's work on the six functor formalism is the extension of the Thom spectrum construction from vector bundles to classes in $K_0$, that is, virtual vector bundles. 
        
	\begin{defn}\label{def:thom-spectrum}
		Let $\Vv=\sE_1\ominus \sE_2$ be a virtual vector bundle over $S$ that is a formal difference of vector bundles over $S$. The \textit{Thom spectrum of $\Vv$} is
		\[
		\Th(\Vv)=\Th(\sE_1)\wedge  \Th(\sE_2)^\vee \in \SH(S)
		\]
		where $	\Th(\sE_2)^\vee = \underline{\Hom}_{\SH(S)}(\Th(\sE_2),\1_S)$ is the object dual to $\Th(\sE_2)$ in $\SH(S)$.
	\end{defn}

        Thom spectra allow a reasonable formulation of twisted generalized cohomology theories, as in \cite[Definition~2.2.1.(ii)]{DJKFundamental}.
        
	\begin{defn} \label{def:twist}
	  Let $X\in \Sch_S$ with the structure morphism $f\colon X\to S$ and let $\Vv = \sE_1\ominus \sE_2$ be a virtual vector bundle over $X$.
          For $A\in\SH(S)$ and $p\in \Z$ we set
		\[
		A^{p}(X;\Vv) := \Hom_{\SH(X)} (\1_X, \Sigma_{\mathrm s}^{p} f^*A 	\wedge \Th(\Vv)).
		\]
		For $Y\in \Sch_X$ with the structure morphism $g\colon Y\to X$ we let
		$A^{p}(Y;\Vv) := A^{p}(Y;g^*\Vv)$. For a closed embedding $i\colon Z\to X$ we denote
		\[
		A^{p}_Z(X;\Vv) := \Hom_{\SH(X)} (i_*\1_Z, \Sigma_{\mathrm s}^{p} f^*A \wedge \Th(\Vv))
		\]
		the cohomology supported on $Z$. For $Z=X$ one has $A^{p}_X(X;\Vv)=A^{p}(X;\Vv)$. The shortened form $A^{p}_Z(X)=A^{p}_Z(X;0)$ will be used for the zero (virtual) vector bundle.
        \end{defn}

        The localization theorem in motivic homotopy theory (see e.g. \cite[Theorem~6.18]{hoyois-sixops}) yields an exact sequence
		\[
		A^{p}_{Z}(X;\Vv) \xrightarrow{} A^{p}(X;\Vv) \xrightarrow{} A^{p}(X-Z;\Vv) \xrightarrow{} A^{p+1}_{Z}(X;\Vv).
		\]
                A trivialization $\theta \colon \Th(\Vv) \xrightarrow{\simeq} \Sigma_\T^{r} \1_X$ induces an isomorphism
		\[
		\Theta := ((\id \wedge \theta) \circ -) \colon A^{p}_Z(X;\Vv) \xrightarrow{\simeq} A^{p}_Z(X;r\otimes \sO_X).
		\]
		We will always denote such isomorphisms by the corresponding capital letters, i.e.~a trivialization $\theta$ induces an isomorphism $\Theta$, a trivialization $\upsilon$ induces an isomorphism $\Upsilon$, and so on.
		Note that if $X$ is the spectrum of a field or of a local ring, then one always has a trivialization $\theta \colon \Th(\Vv) \xrightarrow{\simeq} \Sigma_\T^{r} \1_X$ with $r=\rk \Vv = \rk \sE_1 - \rk \sE_2$.
		
		For a commutative ring spectrum $A\in\SH(S)$ and virtual vector bundles $\Vv, \Vv'$ over $S$, smash product together with the canonical isomorphism $\Th(\Vv\oplus \Vv')\cong \Th(\Vv)\wedge \Th(\Vv')$ gives rise to an associative bilinear pairing
		\[
		A^{p}(X;\Vv)\times A^{q}(X;\Vv') \to A^{p+q}(X;\Vv\oplus \Vv').
		\]
		This endows $\bigoplus_{n\ge 0} A^{*}(X;n\otimes \Vv)$ with a ring structure. A trivialization $\theta\colon \Th(\Vv) \xrightarrow{\simeq} \Sigma_T^r\1_X$ induces a family of isomorphisms $\Theta\colon A^{*}(X;n\otimes \Vv) \xrightarrow{\simeq} A^{*}(X;nr\otimes \sO_{X})$ and it is straightforward to see that these combine into an isomorphism of rings
		\[
		\Theta\colon \bigoplus_{n\ge 0} A^{*}(X,n\otimes \Vv) \xrightarrow{\simeq} \bigoplus_{n\ge 0}  A^{*}(X; nr \otimes \sO_{X}).
		\]
		If $r = 0$ then abusing the notation we also denote $\Theta$ the composition
		\[
		\Theta\colon \bigoplus_{n\ge 0} A^{*}(X,n\otimes \Vv) \xrightarrow{\simeq} \bigoplus_{n\ge 0}  A^{*}(X) \xrightarrow{sum} A^{*}(X)
		\]
		that is a homomorphism of rings.
				
		Recall that by \cite[Theorem~6.4.1]{morel-pi0} and \cite[Corollary~1.25]{Morel} for a field $F$ there is a natural isomorphism of rings
		\[
		\1^{0}(\Spec F)\cong \GW(F),
		\]
		with $\GW(F)$ being the Grothendieck--Witt ring of nondegenerate symmetric bilinear forms over $F$. Let $\Vv$ be a virtual vector bundle of rank $0$ over $S$ and $f\colon \Spec F \to S$ be a regular morphism, where $F$ is a field. By the above a trivialization $\theta\colon \Th(f^*\Vv)\xrightarrow{\simeq} \1_F$ gives rise to a homomorphism of rings
		\[
		\Theta\colon \bigoplus_{n\ge 0} \1^{0}(\Spec F;n\otimes \Vv) \to \GW(F).
		\]
		We will usually abuse the notation writing $\Theta$ also for the isomorphisms that are components of this ring homomorphism.

	\begin{rem} \label{rem:J}
		Similar to the setting of the equivariant stable homotopy theory, a natural grading on a cohomology theory representable in $\SH(S)$ is given by the Picard group $\Pic(\SH(S))$ of $\wedge$-invertible objects in $\SH(S)$, but this group is rather mysterious. One has a much better understood substitute $\Z\times K_0(S)$ that comes with a homomorphism
		\[
		\Z\times K_0(S) \to \Pic(\SH(S)),\quad (n,\Vv)\mapsto \Sigma_{\mathrm s}^n \Th(\Vv)
		\]
		and we use it here as the grading. Note that the above homomorphism is in general neither injective (in particular, by the Adams conjecture that we discuss in the current paper, but visible already in \cite[Prop.~2.2]{rondigs.theta}) nor surjective (see for example \cite{hu.picard} and \cite{bachmann.inv-motives}). Following \cite[Section 16.2]{norms}, one may enhance this homomorphism to the \textit{motivic} $J$-\textit{homomorphism}
		\[
		\mathbf{Th}\colon \mathbf{K}(S) \to \mathbf{Pic}(\SH(S))
		\]
		from the Thomason-Trobaugh $K$-theory space $\mathbf{K}(S)$ of $S$ to the Picard space of $\SH(S)$. On path components it induces the above homomorphism
		$\Th\colon K_0(S) \to \Pic(\SH(S))$, but it also induces homomorphisms of higher homotopy groups.
		The path component of $\mathbf{Pic}(\SH(S))$ containing the basepoint $\1_S\in \mathbf{Pic}(\SH(S))$ coincides with the classifying space of the group-like simplicial monoid of all self-equivalences of $\1_S$.
                The space of all endomorphisms of $\1_S$ on the one hand coincides with the global sections of the infinite $\P^1$-loop space of $\1_S$, and on the other hand contains a path component containing $\id_{\1_S}$, the identity on $\1_S$.
                This path component coincides with the path component of $\id_{\1_S}$ in the simplicial monoid of all self-equivalences of $\1_S$.
                In particular, $\pi_1\mathbf{Pic}(\SH(S))$ is the group of units in $\1^0(S)$, and $\pi_n\mathbf{Pic}(\SH(S))\cong \1^{1-n}(S)$ for $n>1$.
                Hence the motivic $J$-homomorphism induces homomorphisms
		\[
		K_1(S)\to (\1^0(S))^\times,\quad K_n(S)\to \1^{1-n}(S),\, n\ge 2
		\]
                from the algebraic $K$-groups to the motivic stable homotopy groups
                of spheres over $S$.
	\end{rem}

	\begin{rem} \label{rem:local_purity}
		Suppose that $S$ is a regular scheme over a field $F$. Then \cite[Proposition~4.3.10~(ii)]{DJKFundamental} (see also \cite[Theorem~C.1]{DFJK21}) yields that $S$ satisfies local purity (cf. \cite[Conjecture~B]{DegliseOrientation}), i.e. for a point $\zeta\in S$ and a virtual vector bundle $\Vv$ over $S$ one has purity isomorphisms 
		\[
		\1^p_{\{\zeta\}}(V;\Vv)\cong \1^p_{\{\zeta\}}(V;r\otimes \sO_V) \cong \1^p(\zeta;(r-c)\otimes \sO_\zeta),
		\]
		where $V=\Spec \sO_{S,\zeta}$ $r=\rk \Vv$, $c=\dim V$. Here the first isomorphism is induced by a trivialization of $\Th(\Vv|_{V})$, and the second isomorphism is induced by isomorphism \cite[Definition~4.3.7]{DJKFundamental} and a trivialization of $\Th(N_{\zeta/V})$. 
	\end{rem}
	
	\begin{lem} \label{lm:kernel_nilpotent}
		Let $S$ be a regular scheme over a field $F$,  $A\in\SH(F)$ be a commutative ring spectrum, $\Vv$ be a virtual vector bundle over $S$, $U\subseteq S$ be a dense open subset and $\alpha\in A^{p}(S;\Vv)$ be such that $\alpha|_U=0$. Then there exists $n\in \N$ such that $\alpha^n=0$.
	\end{lem}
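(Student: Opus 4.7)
I would prove this by Noetherian induction on the closed complement $Z := S \setminus U$; the base case $Z = \varnothing$ is immediate since then $\alpha = \alpha|_U = 0$. In the inductive step, the localization triangle
\[
A^{p}_Z(S;\Vv) \longrightarrow A^{p}(S;\Vv) \longrightarrow A^{p}(U;\Vv)
\]
and the hypothesis $\alpha|_U = 0$ produce a lift $\tilde\alpha \in A^{p}_Z(S;\Vv)$. Because the commutative ring structure on $A$ together with $\Th(\Vv_1 \oplus \Vv_2) \cong \Th(\Vv_1) \wedge \Th(\Vv_2)$ makes $A^{*}_Z(S; - \otimes \Vv)$ a module over $A^{*}(S; - \otimes \Vv)$, each power $\alpha^{n}$ arises as the image under the forget-supports map of a supported class $\tilde\alpha \cdot \alpha^{n-1} \in A^{np}_Z(S; n \otimes \Vv)$.

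So it suffices to find $N$ such that $\tilde\alpha \cdot \alpha^{N-1}$ vanishes in a Zariski neighborhood of each generic point $\zeta$ of $Z$: taking $U'$ to be the union of these neighborhoods together with $U$ yields an open strictly larger than $U$ on which $\alpha^N$ vanishes, and the induction hypothesis applied to $\alpha^N$ and the pair $(S, U')$ furnishes a further power $(\alpha^N)^M = \alpha^{NM} = 0$. The local claim reduces via local purity (Remark~\ref{rem:local_purity}, which extends from $\1$ to any $A$ since the purity isomorphism in \cite{DJKFundamental,DFJK21} is natural in the coefficient spectrum) to showing that
\[
A^{Np}_{\{\zeta\}}(V_\zeta; N \otimes \Vv) \cong A^{Np}\bigl(\kappa(\zeta);\, N \otimes \Vv|_\zeta \ominus c \otimes \sO_\zeta\bigr)
\]
vanishes for $N \gg 0$, where $V_\zeta := \Spec \sO_{S,\zeta}$ and $c := \codim_S \overline{\{\zeta\}}$.

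Morel's connectivity theorem~\cite[Theorem~6.4.1]{morel-connectivity}, which asserts that $\1_F$ is $(-1)$-connective in the homotopy $t$-structure (equivalently, $\1^{q}(K;\, s \otimes \sO_K) = 0$ for $q + s > 0$ and any finitely generated field extension $K/F$), produces exactly this kind of vanishing for $A = \1$: each additional multiplication by $\alpha$ raises the bidegree by $(p, r)$ where $r := \rk \Vv$, so the bidegree $(Np,\, Nr - c)$ of the iterated class eventually satisfies $Np + Nr - c > 0$ for $N > c/(p+r)$ provided $p + r > 0$, and Morel's theorem kills the class. For a general commutative ring spectrum $A$, one transfers the vanishing through the unit $\1_F \to A$ combined with the Postnikov tower of $A$ in the homotopy $t$-structure, exploiting that $A$ is a $\1_F$-module and that local purity is compatible with this module structure, via an Atiyah--Hirzebruch-type spectral sequence whose $E_2$-page consists of $\1$-cohomology with coefficients in the homotopy modules of $A$ and where Morel's bound kills the relevant columns.

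The main obstacle I foresee is making this transfer from $\1$ to a general $A$ precise, particularly in the degenerate bidegrees $p + r \le 0$ where Morel's bound does not directly activate on the iterated power. Resolving this likely requires a preliminary multiplicative maneuver — multiplying $\tilde\alpha$ by a class of sufficiently positive bidegree in the bigraded ring $\bigoplus_{n \ge 0} A^{*}(S;\, n \otimes \Vv)$ — to move the class into the range where Morel's connectivity forces the vanishing at the generic points of $Z$, after which the Noetherian induction proceeds as above.
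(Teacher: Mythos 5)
Your induction shape --- Noetherian induction on $Z$, lift $\alpha$ to a supported class $\tilde\alpha$, try to kill $\alpha^N$ near each generic point of $Z$, recurse --- is reasonable in outline (the paper runs a parallel induction, on a minimal affine open cover, finishing via Mayer--Vietoris), and the local purity reduction is sound. The obstacle you flag at the end, however, is not a technicality to be cleaned up; it is where the argument breaks, and the ``multiplicative maneuver'' does not repair it. Your local step asks the \emph{whole group} $A^{Np}\bigl(\kappa(\zeta);(Nr-c)\otimes\sO_{\kappa(\zeta)}\bigr)$ to vanish for $N\gg 0$, where $r=\rk\Vv$ and $c=\codim_S\overline{\{\zeta\}}$, and Morel's connectivity bound gives that only when $N(p+r)>c$. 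When $p+r\le 0$ --- a perfectly legitimate case of the lemma --- the bound never activates, and these groups need not vanish for any $N$ (for instance $\1^0(\kappa;-\sO_\kappa)\cong\pi_{1+(1)}\1(\kappa)$, independent of $N$ and nontrivial over $\kappa=\Q$ by \cite{rso.oneline}). Multiplying $\tilde\alpha\cdot\alpha^{N-1}$ by an auxiliary class $\beta$ only shows $\beta\cdot\tilde\alpha\cdot\alpha^{N-1}=0$, which does not give $\tilde\alpha\cdot\alpha^{N-1}=0$ unless $\beta$ is invertible --- and a class of bidegree large enough to reach the Morel range would itself vanish at the residue field by the same estimate.

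The published proof avoids this because it never tries to make the internal $n$-th power over $S$ vanish pointwise. Internal iteration keeps the support on $Z$, of fixed codimension $c$; it never increases. The paper instead factors $\alpha^n$ through an $n$-fold \emph{external} smash over the base field --- this is what the large commutative diagram accomplishes via the unit $\lambda\colon\id\to f^*f_\sharp$ and the op-lax monoidality $\Delta$ of $f_\sharp$. Over $\Spec F$ the supports really do multiply: $(f_\sharp i_*i^*\Th(-\Vv))^{\wedge n}$ is a Thom-type spectrum supported on the $n$-fold product of $j(Z)$, of codimension growing like $n(\rk\sE+c)$. The required vanishing of
\[
\Hom_{\SH(F)}\bigl(f_\sharp\Th(-n\otimes\Vv),\ (f_\sharp i_*i^*\Th(-\Vv))^{\wedge n}\bigr)
\]
then comes from comparing this connectivity against the dimension $\dim S+n\rk\sE$ of the source, giving the bound $nc>\dim S$ --- which involves neither $p$, nor $\rk\Vv$, nor the ring spectrum $A$, since the maps $\widetilde{\alpha}$ to $\Sigma^p_{\mathrm s}A$ are applied only after the $\Hom$-group above is already known to be zero. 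To rescue your route you would need to recover this factor-of-$n$ gain in codimension, and restricting to $\zeta\in S$ --- in effect pulling back along the diagonal $S\to S\times_F\cdots\times_F S$ --- is precisely what destroys it.
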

	\begin{proof}
		The justification of the lemma proceeds by induction on the size of a minimal covering $S=\bigcup_{i=1}^m W_i$ with $W_i\subseteq S$ being affine open. 
		
		Suppose first that $S$ is affine. Changing $F$ to its prime subfield we may assume that $F$ is perfect. Popescu's theorem \cite[Theorem~1.1]{Spivakovsky99} yields that $S$ is a filtered limit of smooth schemes over $F$, thus by continuity property of the stable motivic homotopy category \cite[Proposition~C.12(4)]{HoyoisGLV} we may assume that $S$ is smooth over $F$, with $f\colon S\to \Spec F$ being its structure morphism. Given $U$ as in the statement of the lemma, set $Z:=S-U$. Let $i\colon Z\to S$ be the resulting closed embedding and consider the isomorphisms
		\begin{gather*}
		A^{np}(S;n\otimes \Vv)=\Hom_{\SH(S)}(\1_S,\Sigma_{\mathrm s}^{np} f^*A\wedge \Th(n\otimes \Vv)) \cong \Hom_{\SH(S)}(\Th(-n\otimes \Vv),\Sigma_{\mathrm s}^{np} f^*A),\\
		\begin{multlined}
		A^{p}_Z(S;\Vv) = \Hom_{\SH(S)}(i_*\1_Z,\Sigma_{\mathrm s}^p f^*A \wedge \Th(\Vv))\cong \Hom_{\SH(S)}(i_*\1_Z\wedge  \Th(-\Vv),\Sigma_{\mathrm s}^p f^*A) \cong \\ \cong \Hom_{\SH(S)}(i_*(\1_Z\wedge  i^*\Th(-\Vv)),\Sigma_{\mathrm s}^p f^*A) \cong  \Hom_{\SH(S)}(i_*i^* \Th(-\Vv),\Sigma_{\mathrm s}^p f^*A).
		\end{multlined}
		\end{gather*}
		Here the isomorphisms are given by $\wedge$-invertibility of $\Th(n\otimes \Vv)$ and $\Th(\Vv)$, see also the end of Definition~\ref{def:six-functors} for the isomorphism $i_*\1_Z\wedge  \Th(-\Vv)\cong i_*(\1_Z\wedge  i^*\Th(-\Vv))$.
        By the localization exact sequence there exists $\overline{\alpha}\in A^{p}_Z(S;\Vv)$ which maps to $\alpha$ under the extension of support. Slightly abusing notation, we denote in the same way the elements that one obtains under the isomorphisms given above.
                Consider the following commutative diagram: 
		\[
		\begin{tikzcd}
		\Th(-n\otimes \Vv) \ar[d,"\cong"] \ar[r,"{\lambda}"] & f^*f_\sharp \Th(-n\otimes \Vv) \ar[d,"\cong"]  \\
		\underbrace{\Th(-\Vv) \wedge \dots \wedge \Th (-\Vv) }_n \ar[r,"\lambda"] \ar[d,"\rho\wedge\dots\wedge\rho"] & f^* f_\sharp(\underbrace{\Th(-\Vv) \wedge \dots \wedge \Th (-\Vv) }_n) \ar[d,"f^*(\Delta)"] \\
		\underbrace{i_*i^*\Th(-\Vv) \wedge \dots \wedge i_*i^*\Th (-\Vv) }_n \ar[dr,"\lambda\wedge\dots\wedge \lambda"] \ar[d,"\cup \circ \overline{\alpha}\wedge\dots\wedge \overline{\alpha}"] & f^* (\underbrace{f_\sharp \Th(-\Vv) \wedge \dots \wedge f_\sharp\Th (-\Vv) }_n) \ar[d,"f^*f_\sharp(\rho)\wedge\dots\wedge f^*f_\sharp(\rho)"] \\
		\Sigma_{\mathrm s}^{np} f^*A & \underbrace{f^*f_\sharp i_*i^*\Th(-\Vv) \wedge \dots \wedge f^*f_\sharp i_*i^*\Th (-\Vv) }_n \ar[l,"\cup \circ f^*(\widetilde{\alpha}\wedge\dots\wedge \widetilde{\alpha})"]
		\end{tikzcd}
		\]
		Here $\lambda$ and $\rho$ are units of the respective adjunctions, the vertical isomorphisms in the first row are the canonical ones, $\Delta$ is induced by the op-lax monoidality of $f_\sharp$, and $\widetilde{\alpha}\colon f_\sharp i_*i^* \Th (-\Vv) \to \Sigma_{\mathrm s}^p A$ corresponds to $\overline{\alpha}$ under the adjunction $f_\sharp \dashv f^*$. Note that the composition along the  left column is $\alpha^n$. We claim that there exists a natural number $n$ such that
		\[
		\Hom_{\SH(F)}(f_\sharp \Th(-n\otimes \Vv), \underbrace{f_\sharp i_*i^*\Th(-\Vv) \wedge \dots \wedge f_\sharp i_*i^*\Th (-\Vv) }_n)=0
		\]
		This would imply that the composition along the right column is $0$, whence $\alpha^n=0$. Since $S$ is affine then $\Th(-\Vv)\cong \Sigma_\T^{-r}\Th(\sE)$ for a vector bundle $\sE$ over $S$ and an integer $r$. Suspension $\Sigma_\T^{-r}$ commutes with $f_\sharp$, $i^*$ and $i_*$, yielding
		\begin{multline*}
		\Hom_{\SH(F)}(f_\sharp \Th(-n\otimes \Vv), f_\sharp i_*i^*\Th(-\Vv) \wedge \dots \wedge f_\sharp i_*i^*\Th (-\Vv) )\cong\\
		\cong\Hom_{\SH(F)}(f_\sharp \Th(n\otimes \sE), f_\sharp i_*i^*\Th(\sE) \wedge \dots \wedge f_\sharp i_*i^*\Th (\sE)).
		\end{multline*}
		Let $j\colon Z\to \sE$ be the composition of the embedding $i\colon Z\to S$ and the zero section $S\to \sE$. Then $ f_\sharp i_*i^*\Th (\sE) \cong \Sigma_\T^\infty\sE/(\sE-j(Z))$, so it suffices to check that
		\[
		\Hom_{\SH(F)}(\Sigma_\T^\infty ( \sE^{\oplus n} / (\sE^{\oplus n} - S) ), \Sigma_\T^\infty (\underbrace{\sE\times \sE \times \dots \times\sE}_n / (\underbrace{\sE\times \sE \times \dots \times\sE}_n- \Delta_Z (Z))))=0,
		\]
		where $\Delta_Z \colon Z \to \sE\times \sE \times \dots \times\sE$ is the diagonal embedding induced by $j\colon Z\to \sE$. It follows from \cite[Theorem~6.4.1]{morel-connectivity} that 
		\[
		\Sigma^\infty_\T (\underbrace{\sE\times \sE \times \dots\times \sE}_n / (\underbrace{\sE\times \sE \times \dots\times \sE}_n- \Delta_Z (Z))) \in \SH(F)_{\ge ns+nc}
		\]
		for $s=\rk \sE$ and $c=\codim_S Z$; here the subscript $\ge ns+nc$ refers to Morel's homotopy $t$-structure \cite[Section~5.2]{Morel:2003}. If $ns+nc>\dim \sE^{\oplus n}=ns+d$ or, equivalently, $n>\frac{\dim S}{c}$, then the coniveau spectral sequence (see, for example, \cite[Definition~3.2]{ana-zero}) yields
		\[
		\Hom_{\SH(F)}\bigl(\Sigma_\T^\infty ( \sE^{\oplus n} / (\sE^{\oplus n} - S) ), \Sigma_\T^\infty (\underbrace{\sE\times \sE \times \dots\times \sE}_n / (\underbrace{\sE\times \sE \times \dots \times\sE}_n- \Delta_Z (Z)))\bigr)=0
		\]
		as claimed above\footnote{Here are more details: Since the domain of the homomorphism group in question is a cofiber of two smooth schemes, it suffices to prove the vanishing result for smooth schemes. The connectivity of the target in the homotopy $t$-structure and the boundedness of Nisnevich cohomological dimension of finite dimensional Noetherian schemes implies that the cohomology of a scheme with coefficients in the homotopy sheaves of the target vanishes for $n$ large enough. The global statement then follows from the coniveau spectral sequence (which also manifests as the Nisnevich descent spectral sequence).}. This proves the lemma if $S$ is affine, providing the induction start.
		
		For the induction step, suppose that $S=\bigcup_{i=1}^m W_i$ with $W_i\subseteq S$ being affine open. Assume $U$ and $\alpha$ are in the statement of the lemma. Set $W=\bigcup_{i=1}^{m-1} W_i$. By induction and the induction start, there exist $N\geq n\in \N$ such that $(\alpha|_W)^N=0$ and $(\alpha|_{W_m})^n=0$. Since then also $(\alpha|_{W_m})^N=0$, the Mayer--Vietoris exact sequence
		\[
		A^{p-1}(W\cap W_m;\Vv) \xrightarrow{\partial} A^{p}(S;\Vv) \to A^{p}(W;\Vv)\oplus A^{p}(W_m;\Vv)
		\]
		provides $\beta\in A^{p-1}(W\cap W_m;\Vv)$ with $\partial{\beta}=\alpha^N$. Then $\alpha^{N+n} = (\partial{\beta}) \cdot \alpha^n = \partial (\beta \cdot \alpha^n|_{W\cap W_m})=0$ since $\alpha^n|_{W\cap W_m} = (\alpha^n|_{W_m})|_{W\cap W_m}=0$, concluding the induction step and thus the proof.
	\end{proof}
	
	\begin{cor} \label{cor:kernel_nilpotent}
		Let $S$ be a connected regular scheme over a field $F$ with the generic point $\xi \in S$, $\Vv$ be a virtual vector bundle over $S$ and $A\in\SH(F)$ be a commutative ring spectrum. Then the kernel
		\[
		\ker (A^{p}(S;\Vv)\to A^{p}(\xi;\Vv))
		\]
		consists of nilpotent elements.
	\end{cor}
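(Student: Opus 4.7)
The plan is to reduce the corollary directly to Lemma~\ref{lm:kernel_nilpotent} by finding, for a given kernel element $\alpha$, a dense open $U \subseteq S$ on which $\alpha$ already vanishes. Concretely, given $\alpha \in A^p(S;\Vv)$ with $\alpha|_\xi = 0$, I would like to exhibit $U \subseteq S$ dense open such that $\alpha|_U = 0$, at which point Lemma~\ref{lm:kernel_nilpotent} immediately yields $\alpha^n = 0$ for some $n \in \N$.

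First I would note that $S$ is irreducible: a connected Noetherian regular scheme is locally integral and connected, hence irreducible, so the dense open subsets of $S$ form a cofiltered (in fact filtered under reverse inclusion) system whose limit realizes the generic point, $\Spec \sO_{S,\xi} = \lim_{U \ni \xi} U$, with all transition maps open immersions. Next, I would invoke the continuity property of the motivic stable homotopy category already used in the proof of Lemma~\ref{lm:kernel_nilpotent} (via \cite[Proposition~C.12(4)]{HoyoisGLV}): since the cofiltered limit has affine transition maps, for any spectrum $E \in \SH(S)$ pulled back from $F$ and any compact test object one obtains
\[
A^p(\xi;\Vv) \;\cong\; \colim_{U \ni \xi} A^p(U;\Vv).
\]
Applying this to the image of $\alpha$ under restriction, the vanishing $\alpha|_\xi = 0$ therefore factors through a finite stage of the colimit, i.e.\ there exists a dense open $U \subseteq S$ with $\alpha|_U = 0$.

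With such a $U$ in hand, Lemma~\ref{lm:kernel_nilpotent} applied to $\alpha$ and $U$ produces $n \in \N$ with $\alpha^n = 0$, which is exactly the claim. The only step that requires a little care is the continuity statement, because $\alpha$ lives in a twisted cohomology group and the twist $\Vv$ is not a priori bounded; however, writing $\Vv = \sE_1 \ominus \sE_2$ and using that $\Th(\Vv)$ is $\wedge$-invertible, the twisted group is a mapping space out of a compact object in $\SH(S)$ after dualisation, so continuity along the affine cofiltered system of dense opens applies verbatim. I expect this continuity verification (matching the twist with the filtered colimit) to be the only nontrivial point; everything else is formal given the lemma.
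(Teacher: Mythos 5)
Your proposal is correct and follows essentially the same route as the paper: identify the kernel to the generic point with the union (colimit) of kernels to dense open subsets via the continuity property of $\SH(-)$ from \cite[Proposition~C.12(4)]{HoyoisGLV}, then apply Lemma~\ref{lm:kernel_nilpotent}. The only point you flagged as delicate — matching the twist with the colimit — is handled exactly as you say, by dualizing so that one is mapping out of the compact object $\Th(-\Vv)$; the paper invokes continuity without elaborating, so your version is, if anything, slightly more explicit.
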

	\begin{proof}
		The statement follows from Lemma~\ref{lm:kernel_nilpotent} since
		\[
		\ker (A^{p}(S;\Vv) \to A^{p}(\xi;\Vv))=\bigcup_{\substack{\emptyset\neq U\subseteq S\\ \text{open}}} \ker (A^{p}(S;\Vv) \to A^{p}(U;\Vv) )
		\]
                using the continuity property of the stable motivic homotopy category \cite[Proposition~C.12(4)]{HoyoisGLV}.
	\end{proof}

	\section{A digression on the degree}\label{sec:digression-a1-degree}
        In order to provide equivalences of Thom spectra, a degree criterion detecting such equivalences will be provided in Lemma~\ref{lem:generic_iso}.
        Besides that, the motivic discrepancy between the degree of an Adams operation (which is an integer) and the degree of a map of motivic spheres (which is a class in the Grothendieck-Witt) ring will be moderated with the help of Lemma~\ref{lem:local_degree}.

	\begin{defn}\label{def:a1-degree}
	  Let $F$ be a field. A morphism 
	  $\phi\in \Hom_{\SH(F)}(\Sigma^\infty_\T \A^n/(\A^n- \{0\}), \Sigma^\infty_\T \A^n/(\A^n- \{0\}))$ is said to be of $\A^1$-\textit{degree} $q\in \GW(F)$ if $\Sigma^{-n}_\T \phi$ corresponds to $q$ under the isomorphism 
	  \[
	  \phi \in \Hom_{\SH(F)}(\Sigma^\infty_\T \A^n/(\A^n- \{0\}),\Sigma^\infty_\T \A^n/(\A^n- \{0\})) \cong \Hom_{\SH(F)}(\1_F,\1_F) \cong \GW(F)\ni q
	  \]
          given by \cite[Corollary~1.25]{Morel}.
          A morphism $\phi\colon \A^n/(\A^n- \{0\}) \to \A^n/(\A^n- \{0\})$ in the unstable motivic category over $F$ is of $\A^1$-\textit{degree} $q\in \GW(F)$ if $\Sigma^\infty_\T \phi$ is of $\A^1$-degree $q$.
          In both cases the abbreviation $q=\adeg(\phi)$ may be used.        
	\end{defn}

        Notation for bilinear forms in $\GW(F)$ will follow standard rules, as in \cite{ekm} or \cite{Scharlau85}.
        In particular, the rank 1 symmetric bilinear form associated with a unit $u\in F$ is denoted $\langle u\rangle$. Roughly speaking, the next statement says that any integer $k \in \mathbb{Z} \subset GW(F)$ can be written as $p q$ where $q \in GW(F)$ and $p$ is realized as the $\mathbb{A}^1$-degree of a map between spheres of geometric origin. This should be thought of as a motivic analog of the fact that any integer can be realized as the degree of a self-map between spheres. 
        
	\begin{lem} \label{lem:local_degree}
		Let $F$ be a field and $k\in \N$.
		\begin{enumerate}
			\item 
			If $k$ is odd, then there exists $q\in\GW(F)$ such that $\adeg(\phi) \cdot q=k\in \Z\subseteq GW(F)$, where 
			$\phi\colon \A^1/(\A^1 - \{0\})\to \A^1/(\A^1 - \{0\})$ is defined via $\phi(x):=x^k$.
			\item
			If $k$ is even then there exist $M\in \N_0$, degree $k$ homogeneous polynomials 
			\[
			f_1,f_2,\hdots,f_k\in \Z[x_1,x_2,\hdots,x_k]
			\]
			with $Z(f_1,f_2,\hdots,f_k)=\{0\}\in \A^k$ as a set and $q\in GW(F)$ such that $\adeg(\phi)\cdot q=k^M\in \Z\subseteq GW(F)$, where 
			\[
			\phi\colon \A^k/(\A^k- \{0\}) \to \A^k/(\A^k- \{0\}),\quad (x_1,x_2,\hdots,x_k)\mapsto (f_1,f_2,\hdots,f_k).
			\]
		\end{enumerate}
	\end{lem}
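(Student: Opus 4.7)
For part (1), the plan is to invoke Morel's formula $\adeg(x\mapsto x^n) = \sum_{i=1}^n\langle(-1)^{i-1}\rangle$ in $\GW(F)$. For odd $k$ this simplifies to $\adeg(\phi) = \tfrac{k-1}{2}\mathbb{H} + \langle 1\rangle$, where $\mathbb{H}=\langle 1,-1\rangle$ is the hyperbolic form. A direct calculation using $\mathbb{H}^2 = 2\mathbb{H}$ and $\mathbb{H}\cdot r = \rk(r)\,\mathbb{H}$ shows that the rank-one element $q := k\langle 1\rangle - \tfrac{k-1}{2}\mathbb{H}\in\GW(F)$ satisfies
\[
\Bigl(\tfrac{k-1}{2}\mathbb{H}+\langle 1\rangle\Bigr)\Bigl(k-\tfrac{k-1}{2}\mathbb{H}\Bigr)
= k + \tfrac{k-1}{2}\mathbb{H}\bigl[k-(k-1)-1\bigr] = k,
\]
proving (1).

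For part (2), with $k$ even, the plan is to construct a polynomial endomorphism of $\A^k$ whose $\A^1$-degree is not purely hyperbolic, so that it can be absorbed into a power $k^M$. A natural candidate is the $k$-th power operation on a rank-$k$ free $\Z$-algebra, e.g.\ $A:=\Z[t]/(t^k+1)$ (or an appropriate variant accommodating bad characteristics): writing a general element as $z=x_1+x_2t+\cdots+x_kt^{k-1}$ and expanding $z^k$ in the basis $(1,t,\dots,t^{k-1})$ yields universal degree-$k$ homogeneous polynomials $f_1,\dots,f_k\in\Z[x_1,\dots,x_k]$. The set-theoretic vanishing $Z(f_1,\dots,f_k)=\{0\}$ follows in characteristics where $A_F$ has no nontrivial nilpotents with $k$-th power zero, and must be ensured uniformly by choosing the defining polynomial of $A$ with enough care.

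To produce $q$ and $M$, I would compute $\adeg(\phi)\in\GW(F)$ via the Eisenbud--Khimshiashvili--Levine / Scheja--Storch form on the graded local complete intersection algebra $F[x_1,\dots,x_k]/(f_1,\dots,f_k)$, and exhibit a decomposition $\adeg(\phi)=\nu+h\mathbb{H}$ with $\nu$ invertible in $\GW(F)[\tfrac{1}{k}]$ and $h\in\Z$. The identity $\mathbb{H}\cdot r = \rk(r)\,\mathbb{H}$ then allows one to choose $q$ of the form $k^{M-a}\nu^{-1} - c\mathbb{H}$ with $c\in\Z$ adjusted to cancel the hyperbolic contribution, yielding $\adeg(\phi)\cdot q = k^M$ for $M$ sufficiently large. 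The main obstacle is the explicit verification of this structural decomposition of $\adeg(\phi)$, uniformly in the characteristic. As a sanity check, the baby case $k=2$ with $\phi(x,y)=(x^2-y^2,\,2xy)$ gives $\adeg(\phi)=\mathbb{H}+\langle 2,2\rangle$, and then $(\mathbb{H}+\langle 2,2\rangle)\bigl(4\langle 2\rangle - \mathbb{H}\bigr) = 8 = 2^3$ in $\GW(F)$, confirming both the overall pattern and the necessity of allowing $M>1$.
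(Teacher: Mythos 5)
Part (1) is correct and coincides with the paper's argument: the same degree formula is used and, after simplification, your $q=k\langle 1\rangle-\tfrac{k-1}{2}\mathbb{H}=\tfrac{k+1}{2}\langle 1\rangle-\tfrac{k-1}{2}\langle -1\rangle$ is exactly the $q$ the paper writes down.

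Part (2) has a genuine gap, and the concrete model you propose is in fact broken in characteristic $2$. The map $(x,y)\mapsto(x^2-y^2,\,2xy)$ coming from squaring in $\Z[i]$ does not have $\{0\}$ as its zero locus over $\mathbb{F}_2$: the equation $2xy=0$ is identically true there and $x^2-y^2=(x-y)^2$, so $Z(f_1,f_2)$ is the whole diagonal. The same phenomenon (nilpotents in $A\otimes F$) plagues $\Z[t]/(t^k+1)$ whenever $\operatorname{char}F\mid k$, and since the lemma must hold over every field and in particular produce \emph{integral} $f_i$ with $Z(f_1,\dots,f_k)=\{0\}$ for every residue characteristic, the "choose the defining polynomial of $A$ with enough care" step is not a side remark but the crux — and no single monic $\Z$-polynomial will make $A\otimes F$ reduced for all $F$ when $k$ is even. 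Beyond the zero-locus issue, the claimed decomposition $\adeg(\phi)=\nu+h\mathbb{H}$ with $\nu$ a unit in $\GW(F)[\tfrac{1}{k}]$ is asserted rather than proved, and your candidate $q=k^{M-a}\nu^{-1}-c\mathbb{H}$ still needs to be shown to lie in $\GW(F)$ itself, as the statement requires.

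The paper sidesteps all of this with an explicit construction: it factors $k=2^{r}s$ with $s$ odd, uses the map $u(x_1,x_2)=(x_1^2-x_2^2,\,x_1x_2)$ (note: \emph{no} factor of $2$, so the zero locus is $\{0\}$ in every characteristic) whose $\A^1$-degree is $3\langle 1\rangle+\langle -1\rangle$ by \cite{BMP21}, and composes $r$ copies of $u$ with the diagonal $s$-th power map $v$. Smashing $k/2$ copies of the resulting self-map of $\A^2/(\A^2-\{0\})$ gives the required $\phi$ on $\A^k/(\A^k-\{0\})$, and the complementary $q$ is written down explicitly using the identities $(3\langle 1\rangle+\langle -1\rangle)(3\langle 1\rangle-\langle -1\rangle)=8$ and $(\langle 1\rangle+\tfrac{s^2-1}{2}\mathbb{H})(\langle 1\rangle+\tfrac{s^2-1}{2}(\langle 1\rangle-\langle -1\rangle))=s^2$, yielding $M=3k/2$. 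Your EKL/Scheja--Storch route is conceptually natural, but as written it replaces a finite explicit computation with an unverified structural claim, and the test case you chose to validate the pattern is precisely the one that fails.
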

	\begin{proof}
	  \textbf{(1)} The map $\phi$ has $\A^1$-degree $k_{\epsilon}=\langle 1 \rangle + \frac{k-1}{2}\left( \langle 1 \rangle + \langle -1 \rangle\right)$
          by \cite[Theorem 1.6]{dugger-isaksen.hopf}. Hence the claim holds with $q:=\langle 1 \rangle + \frac{k-1}{2}\left( \langle 1 \rangle - \langle -1 \rangle\right)$.
          
		\textbf{(2)} Write $k=2^r s$ with $s$ being odd. Applying \cite[Theorem~1.2]{BMP21} (see also \cite[Remark~5.5]{BMP21}) shows that the morphism 
		\[
		u\colon \A^2/(\A^2 - \{0\}) \to \A^2/(\A^2 - \{0\}),\quad u(x_1,x_2)= (x_1^2-x_2^2,x_1x_2)
		\]
		has $\A^1$-degree $3\cdot \langle 1 \rangle + \langle -1 \rangle\in GW(F)$. Similarly, the morphism
		\[
		v\colon \A^2/(\A^2 - \{0\}) \to \A^2/(\A^2 - \{0\}),\quad v(x_1,x_2)= (x_1^s,x_2^s)
		\]
		has $\A^1$-degree $s_\epsilon^2 = \langle 1 \rangle + \frac{s^2-1}{2}\left( \langle 1 \rangle + \langle -1 \rangle\right) \in GW(F)$. Then the composition
		\[
		g=\underbrace{u\circ u\circ \hdots \circ u}_r\circ v \colon \A^2/(\A^2 - \{0\}) \to \A^2/(\A^2 - \{0\})
		\]
		is given by integral homogeneous polynomials $f_1,f_2$ of degree $2^r\cdot s = k$ and has $\A^1$-degree
		\[
		(3\cdot \langle 1 \rangle + \langle -1 \rangle)^r \cdot \left(\langle 1 \rangle + \frac{s^2-1}{2}\left( \langle 1 \rangle + \langle -1 \rangle\right)\right) \in GW(F).
		\]
		For $i=1,\hdots, k$ let $f_i:=f_1$ if $i$ is odd and $f_i:=f_2$ if $i$ is even. Then the morphism
		\[
		\phi\colon \A^k/(\A^k- \{0\}) \to \A^k/(\A^k- \{0\}),\quad (x_1,x_2,\hdots,x_k)\mapsto (f_1,f_2,\hdots,f_k),	
		\]
		has $\A^1$-degree equal to the $\A^1$-degree of $\underbrace{g\wedge g \wedge \dots \wedge g}_{k/2}$ which equals to 
		\[
		\adeg(\phi)=\left(\left(3\cdot \langle 1 \rangle + \langle -1 \rangle\right)^r \cdot \left(\langle 1 \rangle + \frac{s^2-1}{2}\left( \langle 1 \rangle + \langle -1 \rangle\right)\right)\right)^{k/2} \in GW(F).
		\]
		With the definition
		\[
		q:= s^{k/2} \cdot \left(\left(3\cdot \langle 1 \rangle - \langle -1 \rangle\right)^r \cdot \left(\langle 1 \rangle + \frac{s^2-1}{2}\left( \langle 1 \rangle - \langle -1 \rangle\right)\right)\right)^{k/2} \in GW(F)
		\]
		the equality $\adeg(\phi)\cdot q = k^{3k/2}\in \Z \subseteq GW(F)$ holds, giving the statement with $M=3k/2$.
	\end{proof}
	
	\begin{lem} \label{lem:local_extension1}
		Let $V=\Spec R$ be the spectrum of a regular local ring $R$, $\xi\in V$ be the generic point and $g\colon \1^0(V) \to \1^0(\xi)$ be the restriction homomorphism. Then for even $K\in \N$ and an invertible $u \in \left(\1^0(\xi)\right)^*$ there exists $n\in \N_0$ such that $u^{K^n}=1$. In particular, $u^{K^n}\in \Im g$.			
	\end{lem}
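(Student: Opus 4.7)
The plan is to show that $t := u^2 - 1$ is a nilpotent, $2$-primary torsion element of $\1^0(\xi) \cong \GW(F)$ (where $F = \mathrm{Frac}(R)$), and then use a $2$-adic estimate on binomial coefficients to deduce $u^{2K^n} = 1$ for $n$ sufficiently large; since $K$ is even, $2K^n$ divides $K^{n+1}$ so $u^{K^{n+1}} = 1$ follows.

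First, Morel's theorem $\1^0(\Spec F) \cong \GW(F)$ \cite[Corollary~1.25]{Morel} identifies the unit $u$ with an element of $\GW(F)^\times$. The rank map $\rk\colon \GW(F) \to \Z$ sends $u$ to $\pm 1$, so $\rk(u^2) = 1$ and $t \in I := \ker(\rk)$. At each ordering $P$ of $F$, the signature $\sigma_P\colon \GW(F) \to \Z$ also sends $u$ to $\pm 1$, giving $\sigma_P(u^2) = 1$ and thus $\sigma_P(t) = 0$. The standard injection $\GW(F) \hookrightarrow \Z \oplus W(F)$, $x \mapsto (\rk(x), \bar x)$ (whose kernel consists of elements $nh$ with $2n = \rk(nh) = 0$, forcing $n = 0$), reduces the computation of the nilradical of $\GW(F)$ to that of $W(F)$: by Pfister's theorem the latter is the kernel of the total signature, and by Pfister--Scharlau it is $2$-primary torsion. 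Hence $t$ is nilpotent and $2$-primary torsion; pick $m, a \in \N$ with $t^m = 0$ and $2^a t = 0$.

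Now expand
\[ u^{2K^n} = (1 + t)^{K^n} = 1 + \sum_{j=1}^{m-1} \binom{K^n}{j} t^j. \]
Each term with $j \ge 1$ vanishes once $2^a$ divides $\binom{K^n}{j}$, since $2^a t^j = 0$. A direct Legendre-formula calculation yields $v_2\bigl(\binom{K^n}{j}\bigr) = n\cdot v_2(K) - v_2(j)$ as soon as $K^n$ is divisible by a power of $2$ exceeding $j$, which grows with $n$ uniformly in $1 \le j \le m-1$. Thus $u^{2K^n} = 1$ for all $n$ sufficiently large. Finally, $v_2(2K^n) = 1 + n v_2(K) \le (n+1)v_2(K) = v_2(K^{n+1})$, and odd parts obviously divide, so $2K^n \mid K^{n+1}$; hence $u^{K^{n+1}} = 1$ and the lemma follows after relabelling.

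The main obstacle is the identification of the nilradical of $\GW(F)$ with the kernel of the combined rank and total signature map; this transfers Pfister's classical result from $W(F)$ through the injection $\GW(F) \hookrightarrow \Z \oplus W(F)$, using that signatures vanish on the hyperbolic form and therefore descend compatibly. Once this is in place, the lemma reduces to the elementary $2$-adic estimate of binomial coefficients.
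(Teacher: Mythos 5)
Your proof is correct and follows essentially the same route as the paper's, which replaces $u$ by $u^K$ (so that $\rk u = 1$ and all signatures of $u$ equal $1$), cites Baeza's book to get $(u-1)^r = 0$ and $2^s(u-1) = 0$, and then annihilates the nonconstant terms in $(1+(u-1))^{K^n}$ with the same $2$-adic estimate $v_2\binom{K^n}{i}\ge v_2(K^n)-v_2(i!)$. One minor imprecision worth noting: your assertion that the nilradical of $\W(F)$ equals the kernel of the total signature is only correct for formally real $F$ --- for non-formally-real $F$ the nilradical is the fundamental ideal, which is strictly smaller than the (vacuously full) total-signature kernel --- but since $\rk t = 0$ forces $\bar t$ into the fundamental ideal of $\W(F)$, the conclusion $t\in\mathrm{Nil}(\GW(F))$ holds in all cases and your argument goes through.
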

	\begin{proof}
		\cite[Corollary~1.25]{Morel} provides an isomorphism $\1^0(\xi)\cong \GW(F(\xi))$, where $F(\xi)$ is the quotient field of $R$, so we need to show that for an invertible element $u \in \GW(F(\xi))^*$ there exists $n$ such that $u^{K^n}=1$. Since $u$ is invertible we have $\rk u=\pm 1$ and $\sgn (u)=\pm 1$ for all possible signatures $\sgn$ \cite[Definition~7.1]{baeza1978quadratic}. Changing $u$ to $u^K$ we may assume that $\rk u = 1$ and  $\sgn ( u)= 1$ for all signatures $\sgn$. Then \cite[Theorem~6.6, Theorem~7.16 and Theorem~8.8]{baeza1978quadratic} yield that $(u-1)^r=0$ and $2^s \cdot (u-1)=0$ for some $s,r\in \N$. Set $n := s +v_2(r!)$
		for the $2$-adic valuation $v_2$, then 
		\[
		u^{K^n} = (1 + (u-1))^{K^n} = 1 + \sum_{i=1}^{\min(r-1,K^n)} { K^n \choose 	i} 	\cdot  (u-1)^i = 1,
		\]
		where for the last equality we used that for $1\le i \le r-1$ one has
		\[
		v_2\left( {K^n \choose i} \right)\ge v_2(K^n) - v_2(i!) \ge n - v_2(r!) =s. \qedhere
		\]
	\end{proof}
Now, if $\sE_1$ and $\sE_2$ are vector bundles over a scheme $X$, which of the same rank, then it does not quite make sense to define the $\A^1$-degree of a map between $\Th(\sE_1)$ and $\Th(\sE_2)$. This only makes sense if $\sE_1$ and $\sE_2$ are trivial vector bundles over $X$. The next concept, while only well defined up to choices, is useful in defining a notion of degree between Thom spaces of non-trivial vector bundles, which will feature in our proof of the motivic mod $k$ Dold theorem. 
		
	\begin{defn}
		Let $\sE_1, \sE_2$ be rank $r$ vector bundles over a scheme $S$, $\zeta \in S$ be a point with the residue field $F(\zeta)$ and let $\phi\in \Hom_{\SH(S)}(\Th(\sE_1), \Th(\sE_2))$ be a morphism. For $q\in \GW(F(\zeta))$ we say that \textit{$\phi$ is of unoriented $\A^1$-degree $q$ at $\zeta$} if there exist trivializations 
		$\theta_1\colon  \Th((\sE_1)|_\zeta) \xrightarrow{\simeq} \Sigma^r_\T \1_\zeta$, $\theta_2\colon \Th((\sE_2)|_\zeta) \xrightarrow{\simeq} \Sigma^r_\T \1_\zeta$
		such that the morphism $\theta_2\circ(i^*\phi)\circ \theta_1^{-1}\in \Hom_{\SH(F(\zeta))}(\Sigma^r_\T \1_,\Sigma^r_\T \1)$ is of $\A^1$-degree $q$, where $i^*\colon \SH(S)\to \SH(F(\zeta))$ is the functor induced by the inclusion $i\colon \zeta\to S$. The abbreviation $\aged_\zeta(\phi)=q$ may be used.
	\end{defn}
	
	\begin{rem}
		Note that the above unoriented $\A^1$-degree $\aged_\zeta(\phi)$ of a morphism $\phi$ is well-defined up to multiplication by units $\GW(F(\zeta))^\times\subset\GW(F(\zeta))$, which in turn correspond to automorphisms of $\Sigma^r_\T \1_\zeta$, that is, different choices of trivializations.
	\end{rem}
	
	\begin{rem}
		Let $F$ be a field, $\sE_1, \sE_2$ be rank $r$ vector bundles over a scheme $S$ over $F$, $\zeta\in S$ be a point and $\phi\in \Hom_{\SH(S)}(\Th(\sE_1), \Th(\sE_2))$. Then in general one can not expect that there exists $q\in \GW(F)$ such that $\phi$ is of unoriented $\A^1$-degree $q$ at $\zeta$ where $q$ is viewed as an element of $\GW(F(\zeta))$ via extension of scalars. For example, take $F=\Q$ to be the field of rational numbers, $S=\A^1-\{0,1\}$, $\sE_1=\sE_2=\sO_S$. Consider morphisms
		\[
		\phi_1\colon \Th(\sO_S) \to \Th(\sO_S),\, (x,t) \mapsto (x,xt), 	\quad	\phi_2\colon \Th(\sO_S) \to \Th(\sO_S),\, (x,t) \mapsto (x,(x-1)t),
		\]
		where $x$ is the coordinate on $S$ and $t$ is the coordinate on $\sO_S$. Let $\phi= \phi_1 + \phi_2$, then the $\A^1$-degree of $i^*\phi$ for the inclusion of the generic point $i\colon \xi \to S$ is $\langle x\rangle + \langle x - 1 \rangle\in \GW(\Q(x))$. One may embed $\Q(x)\to \RR$ sending $x$ to $\pi$ and then consider the induced signature $\sgn_\pi\colon \GW(\Q(x))\to \Z$. Similarly, one may embed $\Q(x)\to \RR$ sending $x$ to $\pi/4$ and then consider the induced signature $\sgn_{\pi/4}\colon \GW(\Q(x))\to \Z$. We have 
		\[
		\sgn_\pi(\langle x\rangle + \langle x - 1 \rangle)=2,\quad \sgn_{\pi/4}(\langle x\rangle + \langle x - 1 \rangle) = 0.
		\]
		Consequently, if $v\in \GW(\Q(X))$ is an unoriented $\A^1$-degree of $\phi$ at the generic point, then it differs from $\langle x\rangle + \langle x - 1 \rangle$ by a factor from $\GW(\Q(x))^*$ whence $\sgn_{\pi}(v)=\pm 2$ and $\sgn_{\pi/4}(v)=0$. Hence $\sgn_{\pi}(v)\neq\sgn_{\pi/4}(v)$. On the other hand, for every $q\in \GW(\Q)$ one has $\sgn_{\pi}(q)=\sgn_{\pi/4}(q)=\sgn(q)$ for the standard signature $\sgn\colon \GW(\Q)\to \Z$.
	\end{rem}

	\begin{lem} \label{lem:generic_iso}
		Let $S$ be a regular scheme over a field $F$, $\sE_1, \sE_2$ be rank $r$ vector bundles over $S$. Then for a morphism $\phi\in\Hom_{\SH(S)}(\Th(\sE_1), \Th(\sE_2))$
		the following are equivalent.
		\begin{enumerate}
			\item For every generic point $\xi$ of $S$ one has $\aged_\xi(\phi)=1$.
		
			\item For every point $\zeta$ of $S$ one has $\aged_\zeta(\phi)=1$.
		
			\item The map $\phi$ is an equivalence.
		\end{enumerate}
	\end{lem}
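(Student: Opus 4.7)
First, the implications $(3) \Rightarrow (2) \Rightarrow (1)$ are essentially immediate, and I would dispatch them in one sentence each: if $\phi$ is an equivalence then every base change $i^\ast\phi$ along a point $\zeta$ is a self-equivalence of $\Sigma_\T^r \1_{F(\zeta)}$, whose $\A^1$-degree therefore lies in $\GW(F(\zeta))^\times$ and is absorbed by the ambiguity in the definition of $\aged_\zeta$; and restricting further to generic points yields (1).

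The main content is $(1) \Rightarrow (3)$, where my plan is to reduce to a local calculation on each $V_\zeta := \Spec \sO_{S,\zeta}$ via Zariski descent for $\SH$, and then combine Lemma~\ref{lem:local_extension1} with Corollary~\ref{cor:kernel_nilpotent}. Treating connected components separately, I may assume $S$ is connected with a unique generic point $\xi$. Consider the cofiber $C := \cofib(\phi) \in \SH(S)$; it is compact since $\Th(\sE_1)$ and $\Th(\sE_2)$ are, and the goal reduces to showing $C = 0$. I claim it suffices to verify $C|_{V_\zeta} = 0$ for every $\zeta \in S$: the continuity property $\SH(V_\zeta) \simeq \colim_{\zeta \in U} \SH(U)$ over affine open neighbourhoods, combined with compactness of $C|_{V_\zeta}$, forces $C$ to vanish already on some affine open $U_\zeta \ni \zeta$, and the resulting open cover $\{U_\zeta\}_{\zeta \in S}$ of $S$ together with Zariski descent for $\SH$ then yields $C = 0$.

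It remains to check equivalence on each $V_\zeta$. Since $V_\zeta$ is regular local, both $\sE_1|_{V_\zeta}$ and $\sE_2|_{V_\zeta}$ are free; fixing trivialisations realises $\phi|_{V_\zeta}$ as an element $u \in \1^0(V_\zeta)$, and $\phi|_{V_\zeta}$ is an equivalence iff $u$ is a unit in the commutative ring $\1^0(V_\zeta)$. The generic point of $V_\zeta$ is $\xi$, and by hypothesis (1) together with the chosen trivialisations, $u|_\xi \in \1^0(\xi) = \GW(F(\xi))$ is a unit. Applying Lemma~\ref{lem:local_extension1} with $K=2$ gives some $n \ge 0$ with $(u|_\xi)^{2^n} = 1$, so $u^{2^n} - 1$ lies in $\ker(\1^0(V_\zeta) \to \1^0(\xi))$, which consists of nilpotent elements by Corollary~\ref{cor:kernel_nilpotent}. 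Hence $u^{2^n} = 1 + \varepsilon$ with $\varepsilon$ nilpotent is a unit (inverse given by the finite geometric sum $\sum_{k\ge 0}(-\varepsilon)^k$), and commutativity forces $u$ itself to be a unit.

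The step I expect to require the most care is the first part of $(1) \Rightarrow (3)$: rigorously reducing the global equivalence of $\phi$ to equivalences on each local scheme $V_\zeta$ leans on the interplay of compactness of the cofiber with continuity and Zariski descent for $\SH$. The remainder is a clean concatenation of Lemma~\ref{lem:local_extension1} and Corollary~\ref{cor:kernel_nilpotent}.
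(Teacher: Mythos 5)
Your proof is correct, and the heart of it — trivialise over the local scheme $V_\zeta=\Spec\sO_{S,\zeta}$, realise $\phi|_{V_\zeta}$ as an element $u\in\1^0(V_\zeta)$, use Lemma~\ref{lem:local_extension1} to get $(u|_\xi)^{2^n}=1$, then Corollary~\ref{cor:kernel_nilpotent} to conclude $u^{2^n}=1+\varepsilon$ with $\varepsilon$ nilpotent, hence $u$ is a unit — is essentially identical to the paper's computation. The difference lies in the global assembly. You deduce global equivalence from the local ones by forming $C=\cofib(\phi)$, arguing that compactness of $C$ together with continuity $\SH(V_\zeta)\simeq\colim_{\zeta\in U}\SH(U)$ forces $C$ to vanish on an affine neighbourhood $U_\zeta$ of each $\zeta$, and then applying Zariski descent to conclude $C=0$. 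This works, but it is more machinery than necessary. The paper instead observes that the local analysis already shows $i_\zeta^\ast\phi$ is an equivalence for every point $\zeta\in S$ (this is exactly the content of the paper's step $(1)\Rightarrow(2)$), and then invokes joint conservativity of the family of restriction functors $\{i_\zeta^\ast\}_{\zeta\in S}$ to points (\cite[Proposition~B.3]{norms}), immediately giving $(2)\Rightarrow(3)$. In other words, you do not need to pass through local rings, Zariski descent, or compactness of the cofiber at all: conservativity of restriction to residue fields delivers the global statement in one line. Your route is sound, but if you present it, it would be worth flagging precisely which form of compactness of $C|_{V_\zeta}$ you use in the filtered colimit argument, since that is the one non-formal point; citing joint conservativity as the paper does sidesteps this entirely.
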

	\begin{proof}
		Let $\Vv= \sE_2\ominus \sE_1$ and let $\alpha$ be the element corresponding to $\phi$ under the isomorphism
		\[
			\alpha\in\1^{0}(S;\Vv)=\Hom_{\SH(S)}(\1,  \Th(\sE_2)\wedge 	 \Th(\sE_1)^\vee )\cong \Hom_{\SH(S)}( \Th(\sE_1) ,  \Th(\sE_2))\ni \phi.
		\]
		
		$\mathbf{(1)\Rightarrow (2)}$
		Let $\xi\in S$ be a generic point. Then it follows from the assumption that there exists a trivialization $\theta\colon \Th(\Vv|_\xi)\xrightarrow{\simeq} \1_\xi$ such that $\Theta(\alpha|_\xi)=1$. Let $\zeta \in \overline{\{\xi\}}\subseteq S$ be a point, $V=\Spec \sO_{S,\zeta}$ and let $\upsilon\colon \Th(\Vv|_V)\xrightarrow{\simeq} \1_V$ be a trivialization. Consider the following commutative diagram. 
		\[
		\begin{tikzcd}
		  \1^0(\zeta;\Vv) \ar[d,"\Upsilon\vert_\zeta"', "\cong"] &
                  \1^0(V;\Vv) 	\ar[d,"\Upsilon"', "\cong"] \ar[r,"(-)|_\xi"]
                  \ar[l,"(-)|_\zeta"'] &
                  \1^0(\xi;\Vv) \ar[d,"\Upsilon|_\xi"', "\cong"] \\
		  \1^0(\zeta) & \1^0(V) \ar[r,"(-)|_\xi"] \ar[l,"(-)|_\zeta"'] &
                  \1^0(\xi)
                \end{tikzcd}
	          \]
		Here the vertical isomorphisms are induced by $\upsilon$ and the horizontal homomorphisms are given by restrictions. Set $u:=\Upsilon(\alpha|_V)$. Since $\Theta(\alpha|_\xi)=1$ then $u|_\xi = (\Upsilon|_\xi \circ\Theta^{-1})(1)$ is an invertible element of $\1^0(\xi)$. Then it follows from Lemma~\ref{lem:local_extension1} that $(u^N)|_\xi=1$ for some $N$ and Corollary~\ref{cor:kernel_nilpotent} yields that $u^N-1=\nu$ for a nilpotent element $\nu \in \1^0(V)$. Thus $u^N=1+\nu$ is invertible, $u$ is invertible and $u|_\zeta$ is invertible as well. Choose trivializations $\psi_1\colon \Th((\sE_1)|_{\zeta})\xrightarrow{\simeq} \Sigma^{r}_\T \1_\zeta$, $\psi_2\colon \Th((\sE_2)|_{\zeta})\xrightarrow{\simeq} \Sigma^{r}_\T \1_\zeta$ in such a way that $\psi_1\wedge (\psi_2^{-1})^\vee = u|_\zeta^{-1}\circ \Upsilon$, where $u|_\zeta$ is viewed as an automorphism of $\1_\zeta$. Then the morphism $\psi_2\circ i_\zeta^*\phi \circ \psi_1^{-1}$ is of $\A^1$-degree $1$. 
		
		$\mathbf{(2)\Rightarrow (3)}$
		The assumption yields that for every point $\zeta\in S$ with inclusion $i_\zeta\colon \zeta \to S$ the morphism $i_\zeta^*(\phi)$ is an isomorphism. The claim follows, since the collection of functors $\{i_\zeta^*\}_{\zeta \in S}$ is jointly conservative by \cite[Proposition~B.3]{norms}. 
		
		$\mathbf{(3)\Rightarrow (1)}$ This is clear.
\end{proof}
	
	\section{A motivic mod $k$ Dold theorem away from the characteristic}\label{sec:motivic-mod-k}

        The purpose of this section is to provide Theorem~\ref{thm:modkDold}, a motivic version of Adams' celebrated mod $k$ Dold theorem \cite[Theorem 1.1]{adams.j1}, which will imply the line bundle case of Theorem~\ref{thm:Adams}.
        The proof of Theorem~\ref{thm:modkDold} requires several preliminary statements, starting with the following, which should be well-known.

	\begin{lem} \label{lem:Gersten}
		Let $S$ be a regular scheme over a field $F$ and $\zeta\in S$ be a point. Set $V:=\Spec \sO_{S,\zeta}$ and let $\xi\in V$ be the generic point. Then the Cousin complex
		\[
		\1^0(V) \xrightarrow{g}		\1^0(\xi) \xrightarrow{\partial} \bigoplus_{z\in V^{(1)}} \1^{1}_{\{z\}}(\Spec \sO_{V,z})\xrightarrow{\partial}\hdots \xrightarrow{\partial} \bigoplus_{z\in V^{(d)}} \1^{d}_{\{z\}}(\Spec \sO_{V,z})
		\]
		is exact. Here $g$ is the restriction to the generic point, the direct sums are over all the points of $V$ of fixed codimension, $\partial$ are induced by the localization sequences and $d$ is the dimension of $V$.
	\end{lem}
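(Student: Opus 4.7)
The plan is to identify the displayed complex as the row~$0$ of the $E_1$-page of the coniveau spectral sequence converging to $\1^{\ast}(V)$, and then to appeal to Morel's Gersten resolution theorem for the motivic sphere spectrum over essentially smooth regular local schemes over a perfect field.

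First I would run the same preliminary reduction as in the proof of Lemma~\ref{lm:kernel_nilpotent}: replacing $F$ by its prime subfield one may assume $F$ is perfect, and Popescu's theorem \cite[Theorem~1.1]{Spivakovsky99} combined with the continuity of $\SH$ \cite[Proposition~C.12(4)]{HoyoisGLV} allows one to reduce to the case where $V$ is the local ring at $\zeta$ of a smooth $F$-scheme. The filtration of $V$ by closed subsets of increasing codimension then produces a coniveau spectral sequence converging to $\1^{\ast}(V)$ whose $E_1$-term, after applying the local purity isomorphisms of Remark~\ref{rem:local_purity} to each summand $\1^{p}_{\{z\}}(\Spec \sO_{V,z})$, is precisely the displayed Cousin complex; its differentials match the boundary maps $\partial$ arising from the localization triangles.

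Exactness of this complex is then Morel's Gersten-type result \cite{Morel} applied to the strictly $\A^1$-invariant Nisnevich sheaves associated to the homotopy $t$-structure of \cite{morel-connectivity} on $\SH(F)$, evaluated on $\1_F$. Concretely, after identifying the entries with twisted cohomology of residue fields, one shows by the standard geometric presentation lemma for essentially smooth local schemes that each cycle in degree $\ge 1$ is a boundary, and that the restriction $g\colon \1^0(V)\to \1^0(\xi)$ is injective. The main obstacle, if one were to prove this from scratch, is exactly the geometric presentation lemma underlying the Gersten property; since this is by now standard in the motivic setting (and the paper treats the statement as well-known), it suffices to quote the existing literature.
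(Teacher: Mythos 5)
Your proposal is a valid alternative but takes a genuinely different route from the paper's proof, and it leaves the key step less precise. Both arguments begin the same way (reduce to $F$ perfect, then to $V$ essentially smooth local via Popescu and continuity, then set up the coniveau spectral sequence), but they diverge at the crucial point. You run the coniveau spectral sequence for the untruncated spectrum $\1$, which has $E_1$-page with contributions in many rows; you therefore must import exactness of the $q=0$ row as an external input, namely by quoting Morel's Gersten resolution for the strictly $\A^1$-invariant sheaf $\pi_0(\1)_\ast$. The paper instead replaces $\1$ by its truncation $\1_{\le 0}$ in the homotopy $t$-structure, for which the $E_1$-page collapses to a single row $q=0$ by the very definition of the truncation combined with local purity; the abutment $(\1_{\le 0})^{p+q}(V)$ is concentrated at $p+q=0$, so exactness of the row (which, after the purity isomorphisms, is literally the Cousin complex of the lemma with $\1$-coefficients) drops out of strong convergence of the spectral sequence, with no further input. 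This is tidier because it sidesteps a compatibility check that your approach leaves implicit: Morel's Gersten/Rost--Schmid complex is built with explicitly constructed residue maps, whereas the Cousin complex in the lemma has boundary maps coming from localization triangles, and identifying the two is nontrivial. It is also worth noting that the phrase ``the $E_1$-term \ldots\ is precisely the displayed Cousin complex'' is slightly inaccurate: the initial term $\1^0(V)$ and the map $g$ lie outside the $E_1$-page and instead appear through the edge map to the abutment, which is precisely what the paper's argument keeps track of when it observes that the target of the spectral sequence is concentrated at $p+q=0$ where it equals $\1^0(V)$. Your approach would work if the residue-map compatibility were carried out carefully, but as written it glosses over that identification.
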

	\begin{proof}
		Replacing $F$ with its prime subfield we may assume that $F$ is perfect. Let $f\colon \1\to \1_{\le 0}$ be the zeroth truncation of $\1=\1_F\in \SH(F)$ with respect to the homotopy $t$-structure \cite[Section~5.2]{Morel:2003}, i.e. for $p,q \in \Z$ and a local scheme $W$ essentially smooth over $F$ one has 
		\[
		(\1_{\le 0})^{p}(W; \sO_W^{\oplus q}) = \begin{cases}
			\1^{p}(W;\sO_W^{\oplus q}), & p+q =0,\\
			0, & p+q\neq 0,			
		\end{cases}
		\]
		with the isomorphism induced by $f$. Recall that by Popescu's theorem every regular local ring containing $F$ is a filtered colimit of essentially smooth local $F$-algebras \cite[Corollary~1.3]{Swan98}, whence continuity of stable motivic homotopy \cite[Proposition~C.12(4)]{HoyoisGLV} yields that the above identifications hold for an arbitrary local regular $W$ over $F$ as well.
		
		Localization sequences give rise to the strongly convergent coniveau spectral sequence
		\[
		E_1^{p,q}=\bigoplus_{z\in V^{(p)}} (\1_{\le 0})_{\{z\}}^{p+q}(\Spec \sO_{V,z}) \Rightarrow (\1_{\le 0})^{p+q}(V)
		\]
		concentrated in the strip $0\le p\le \dim V$. It follows from \cite[Proposition~4.3.10~(ii)]{DJKFundamental} (see also \cite[Theorem~C.1]{DFJK21}) that for $z\in V^{(p)}$ one has natural isomorphism
		\[
		(\1_{\le 0})_{\{z\}}^{p+q}(\Spec \sO_{V,z})\cong (\1_{\le 0})^{p+q}(z;-p\otimes \sO_z),
		\]
		whence 
		\[
			(\1_{\le 0})^{p+q}_{\{z\}}(\Spec \sO_{V,z}) = \begin{cases}
			\1^{p+q}_{\{z\}}(\Spec \sO_{V,z}), & q =0,\\
			0, & q\neq 0.			
			\end{cases}
		\]		
		Thus the coniveau spectral sequence is concentrated at the $0$-th line and coincides with the Cousin complex. The claim follows, since the target of the spectral sequence is nontrivial only when $p+q=0$ where it is given by $\1^{0}(V)$.
	\end{proof}

The next lemma is an odd counterpart to the even version in Lemma~\ref{lem:local_extension1}.

	\begin{lem} \label{lem:local_extension2}
		Let $V=\Spec R$ be the spectrum of a regular local ring $R$ containing a field $F$, $\xi\in V$ be the generic point and $g\colon \1^0(V) \to \1^0(\xi)$ be the restriction homomorphism. Then for odd $K\in \N$ and $u \in \1^0(\xi)$ if $K\cdot u \in \Im g$ then $u \in \Im g$.
	\end{lem}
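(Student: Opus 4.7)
The plan is to transport the divisibility question along the exact Cousin complex of Lemma~\ref{lem:Gersten} and then identify each residue group with a Witt ring, where Pfister's classical theorem supplies the needed injectivity.

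First, suppose $Ku = g(v)$ for some $v\in \1^0(V)$. Lemma~\ref{lem:Gersten} provides the exact Cousin complex
\[
\1^0(V) \xrightarrow{g} \1^0(\xi) \xrightarrow{\partial} \bigoplus_{z\in V^{(1)}} \1^{1}_{\{z\}}(\Spec \sO_{V,z}) \to \cdots,
\]
so $\Im g = \ker \partial$. Applying $\partial$ yields $K\cdot \partial(u) = \partial(Ku) = 0$, so it suffices to show that multiplication by the odd integer $K$ acts injectively on each summand $\1^{1}_{\{z\}}(\Spec \sO_{V,z})$ indexed by a codimension one point $z\in V^{(1)}$.

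To analyze this summand I would invoke local purity (Remark~\ref{rem:local_purity}) to get $\1^{1}_{\{z\}}(\Spec \sO_{V,z}) \cong \1^{1}(z;-\sO_z)$. Using the identification $\Sigma_s^{1}\Th(-\sO_z)\simeq \Sigma_\Gm^{-1}\1_z$, this becomes $\Hom_{\SH(F(z))}(\Sigma_\Gm \1_{F(z)}, \1_{F(z)}) = \pi_{1,1}(\1_{F(z)})$, which Morel's fundamental computation (cf.~\cite[Corollary~1.25]{Morel}) recognizes as Milnor--Witt K-theory in degree $-1$, naturally isomorphic to the Witt ring $W(F(z))$. As in the proof of Lemma~\ref{lem:Gersten}, a reduction to the prime subfield together with Popescu approximation and continuity handles the case of a possibly non-essentially-smooth residue field.

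The final ingredient is Pfister's classical theorem that the torsion subgroup of $W(F)$ is $2$-primary for every field $F$. Consequently multiplication by the odd integer $K$ is injective on $W(F(z))$, which forces $\partial(u)=0$ in the target of $\partial$. Exactness of the Cousin complex then produces the desired $w\in \1^0(V)$ with $g(w)=u$. The main subtlety I anticipate is pinning down the bigraded identification $\pi_{1,1}(\1_{F(z)}) \cong W(F(z))$ with correct sign conventions; once that is in place, the arithmetic input from quadratic form theory is entirely standard.
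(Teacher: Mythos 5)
Your proposal is correct and follows essentially the same route as the paper: exactness of the Cousin complex from Lemma~\ref{lem:Gersten} reduces the claim to showing $\partial(u)=0$; purity identifies each $\1^{1}_{\{z\}}(\Spec\sO_{V,z})$ with $\W(F(z))$ via Morel's computation; and the fact that Witt rings have only $2$-primary torsion forces $\partial(u)=0$ since $K$ is odd. The only cosmetic difference is that you spell out the grading bookkeeping and unnecessarily invoke a Popescu/continuity step (Morel's theorem applies directly to the residue field $F(z)$, and the purity isomorphism from Remark~\ref{rem:local_purity} already contains the approximation argument), whereas the paper simply cites \cite[Corollary~1.25, Lemma~3.10]{Morel} and \cite[Chapter~V, Theorem~6.6]{baeza1978quadratic}.
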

	\begin{proof}
		Consider the exact sequence 
		\[
		\1^0(V) \xrightarrow{g} \1^0(\xi) \xrightarrow{\partial} \bigoplus_{z\in X^{(1)}} \1^{1}_{\{z\}}(\Spec \sO_{V,z})
		\]		
		of Lemma~\ref{lem:Gersten}. The purity property combined with \cite[Corollary~1.25, Lemma~3.10]{Morel} yields an isomorphism
		\[
		\1^{1}_{\{z\}}(\Spec \sO_{V,z}) \cong  \W(F(z))
		\]
		where $\W(F(z))$ is the Witt ring of nondegenerate symmetric bilinear forms over the residue field $F(z)$. The only possible torsion in the Witt ring is $2$-power torsion \cite[Chapter~V, Theorem~6.6]{baeza1978quadratic} whence $K\cdot \partial (u) =\partial (K\cdot u)=0$ implies $\partial (u)=0$ and the claim follows. 
	\end{proof}

	\begin{lem} \label{lm:powerkernel} 
		Let $j^*\colon S\to R$ be a homomorphism of commutative rings, and $\beta,\nu\in R$ and $K\in\N$ be such that $\nu$ is nilpotent and $\beta-\nu,\,K\cdot \nu \in j^*(S)$. Then there exists $m\in \N_0$ such that $\beta^{K^m}  \in j^*(S)$.
	\end{lem}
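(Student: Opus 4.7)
The plan is to expand $\beta^{K^m}$ binomially, using that $\alpha:=\beta-\nu$ lies in $j^*(S)$ and that $\nu$ is nilpotent, and then to rewrite each error term as an integer multiple of a power of $K\nu$. Let $n\in\N$ be such that $\nu^n=0$. Since $j^*\colon S\to R$ is a ring homomorphism, its image is a subring of $R$, and we have
\[
\beta^{K^m}=(\alpha+\nu)^{K^m}=\sum_{i=0}^{n-1}\binom{K^m}{i}\alpha^{K^m-i}\nu^i.
\]
The $i=0$ term $\alpha^{K^m}$ lies in $j^*(S)$ since $\alpha$ does. For $1\le i\le n-1$, I will realize $\binom{K^m}{i}\nu^i$ as an integer multiple of $(K\nu)^i=K^i\nu^i\in j^*(S)$, which reduces the claim to the divisibility assertion $K^i\mid\binom{K^m}{i}$ for all $1\le i\le n-1$.

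The main obstacle is precisely this divisibility — the trivial observation that $K\mid\binom{K^m}{i}$ only handles $i=1$, so a strengthening is required. Writing $i!\binom{K^m}{i}=K^m(K^m-1)\cdots(K^m-i+1)$ and taking $p$-adic valuations for a prime $p\mid K$ gives the elementary lower bound
\[
v_p\!\left(\binom{K^m}{i}\right)\ge m\,v_p(K)-v_p(i!).
\]
Hence $K^i\mid\binom{K^m}{i}$ holds as soon as $m\,v_p(K)-v_p(i!)\ge i\,v_p(K)$ for every prime $p\mid K$ and every $1\le i\le n-1$. Since $v_p(K)\ge 1$, it suffices to require $m\ge(n-1)+v_p((n-1)!)$ for each such prime $p$; this is a finite system of inequalities, hence is satisfied for all $m$ sufficiently large in terms of $K$ and the nilpotence index $n$.

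Once this bound on $m$ is fixed, for each $1\le i\le n-1$ write $\binom{K^m}{i}=d_iK^i$ with $d_i\in\Z$, so that
\[
\binom{K^m}{i}\alpha^{K^m-i}\nu^i=d_i\,\alpha^{K^m-i}(K\nu)^i\in j^*(S).
\]
Adding these contributions to the $i=0$ term yields $\beta^{K^m}\in j^*(S)$, as desired. The argument is entirely elementary commutative algebra internal to the ring $R$; the preceding lemmas of the paper enter only indirectly, insofar as they produce the nilpotent element $\nu$ and the divisibility hypothesis $K\nu\in j^*(S)$ that feed into this binomial computation.
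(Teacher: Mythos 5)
Your proof is correct and takes essentially the same approach as the paper: binomially expand $\beta^{K^m}=((\beta-\nu)+\nu)^{K^m}$, truncate using nilpotence of $\nu$, and choose $m$ (the paper takes $m=r+\max_{p\mid K}v_p(r!)$ with $\nu^{r+1}=0$, i.e.\ $r=n-1$) so that $K^i\mid\binom{K^m}{i}$ for all $i$ up to the nilpotence bound, allowing each error term to be rewritten as an integer multiple of $(K\nu)^i(\beta-\nu)^{K^m-i}\in j^*(S)$. Your only deviation is spelling out the $p$-adic valuation estimate that the paper asserts in one line, which is a welcome elaboration rather than a different argument.
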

	\begin{proof}
		If $K=1$ then $\beta = \beta-\nu+\nu \in j^*(S)$, so we may assume $K>1$. Let $r\in \N$ be such that $\nu^{r+1}=0$ and put $m := r+\max_{p|K}v_p(r!)$ for the $p$-adic valuation $v_p$. Then 
		\[
		\beta^{K^m}= \left((\beta-\nu)+\nu\right)^{K^m} = \sum_{i=0}^{r} {K^m \choose i} \nu^i(\beta-\nu)^{K^m-i},
		\]
		here we used that $\nu^{r+1}=0$. Recall that ${K^m \choose i} = \frac{\prod_{j=0}^{j=i-1} (K^m-j)}{i!}$ whence by the choice of $m$ we have $K^i\,|\,{K^m \choose i}$ for $1\le i \le r$. Thus
		\[
		\beta^{K^m} = \sum_{i=0}^{r} a_i (K\cdot \nu)^i(\beta-\nu)^{K^m-i},\quad a_i \in \N.
		\]
		Each summand by the assumption belongs to $j^*(S)$ whence the claim.
	\end{proof}

	\color{black}
	
	\begin{lem} \label{lm:power_partial_zero} 
		Let $V=\Spec R$ be the spectrum of a regular local ring containing a field $F$, $\zeta\in V$ be the closed point, $V^o=V-\{\zeta\}$ and $\xi \in V$ be the generic point. Let $\Vv$ be a virtual vector bundle of rank $0$ over $V$ and $\theta\colon \Th(\Vv|_\xi) \xrightarrow{\simeq} \1_\xi$ be a trivialization. Let $\alpha\in \1^{0}(V;\Vv)$ and $\beta\in \1^{0}(V^o;\Vv)$
		be such that
		\[
		\alpha|_{V^o} = K\cdot \beta,\quad \Theta(\beta|_\xi)=1\in\Z\subseteq \GW(F(V))
		\]
		for some $K\in \N$. Then there exists $m\in \N_0$ such that for the localization sequence
		\[
		\1^{0}_{\{\zeta\}}(V;K^m \otimes \Vv)\to \1^{0}(V;K^m \otimes \Vv) \to \1^{0}(V^o;K^m \otimes \Vv) \xrightarrow{\partial} \1^{1}_{\{\zeta\}}(V;K^m \otimes \Vv)
		\]
		one has $\partial \left(\beta^{K^m}\right) = 0$.
	\end{lem}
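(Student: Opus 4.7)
The plan is to trivialize $\Th(\Vv)$ on $V$ and reduce the statement to a question about the restriction ring homomorphism $j^\ast\colon \1^0(V)\to \1^0(V^o)$, to which Lemma~\ref{lm:powerkernel} can then be applied.

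First, since $V=\Spec R$ is local, vector bundles on $V$ are trivial, so there exists a trivialization $\upsilon\colon \Th(\Vv)\xrightarrow{\simeq}\1_V$. The induced ring maps $\Upsilon$ from $\bigoplus_{n\ge 0}\1^0(-;n\otimes \Vv)$ onto $\1^0(-)$ let one set $a:=\Upsilon(\alpha)\in \1^0(V)$ and $b:=\Upsilon(\beta)\in \1^0(V^o)$; the hypothesis $\alpha|_{V^o}=K\beta$ becomes $j^\ast(a)=Kb$. Because $\upsilon|_\xi$ and $\theta$ are two trivializations of $\Th(\Vv|_\xi)$, they differ by multiplication by a unit $u\in (\1^0(\xi))^\times\cong \GW(F(\xi))^\times$, and the hypothesis $\Theta(\beta|_\xi)=1$ translates to $b|_\xi=u$. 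Since $\Upsilon$ commutes with restrictions, the desired vanishing $\partial(\beta^{K^m})=0$ is equivalent to $b^{K^m}$ lying in $j^\ast(\1^0(V))$.

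Next I find $n\in\N_0$ and a lift $\tilde u_n\in \1^0(V)$ of $u^{K^n}$. For $K$ odd, take $n=0$: by Lemma~\ref{lem:local_extension2}, since $Ku=j^\ast(a)|_\xi\in\Im g$, there exists $\tilde u_0\in \1^0(V)$ with $\tilde u_0|_\xi=u$. For $K$ even, Lemma~\ref{lem:local_extension1} produces $n$ with $u^{K^n}=1$, so $\tilde u_n:=1\in \1^0(V)$ works. Set $\nu:=b^{K^n}-\tilde u_n|_{V^o}$; its restriction $\nu|_\xi=0$ combined with Corollary~\ref{cor:kernel_nilpotent} (applied to the connected regular scheme $V^o$) makes $\nu$ nilpotent, while $b^{K^n}-\nu=\tilde u_n|_{V^o}\in j^\ast(\1^0(V))$ by construction.

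The key identity is
\[
K^{K^n}b^{K^n}=(Kb)^{K^n}=(a|_{V^o})^{K^n}=(a^{K^n})|_{V^o},
\]
which yields $K^{K^n}\nu=(a^{K^n}-K^{K^n}\tilde u_n)|_{V^o}\in j^\ast(\1^0(V))$. Applying Lemma~\ref{lm:powerkernel} with the integer $K$ there replaced by $K^{K^n}$, I obtain $m'\in\N_0$ with $(b^{K^n})^{(K^{K^n})^{m'}}=b^{K^{n+K^n m'}}\in j^\ast(\1^0(V))$. Setting $m:=n+K^n m'$ gives $b^{K^m}\in j^\ast(\1^0(V))$, which via $\Upsilon$ translates back to $\partial(\beta^{K^m})=0$. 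The main subtlety is the use of $K^{K^n}$ rather than $K$ itself: in the even case $Kb^{K^n}$ need not extend on the nose---only $(Kb)^{K^n}=K^{K^n}b^{K^n}$ does---and this deficit is absorbed by raising $b$ to a yet higher $K$-power, which the conclusion permits.
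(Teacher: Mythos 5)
Your proof is correct and follows essentially the same route as the paper: trivialize $\Th(\Vv)$ over $V$, find a lift of a suitable $K$-power of the generic-point unit using Lemma~\ref{lem:local_extension2} (odd case) or Lemma~\ref{lem:local_extension1} (even case), invoke Corollary~\ref{cor:kernel_nilpotent} on the connected regular scheme $V^o$ to show the resulting discrepancy $\nu$ is nilpotent, and finish with Lemma~\ref{lm:powerkernel} applied to $K^{K^n}$. The only cosmetic difference is that you work entirely in the trivialized picture (setting $a=\Upsilon(\alpha)$, $b=\Upsilon(\beta)$, and abstracting the two trivializations by the unit $u$), and you unify the odd and even cases via the notation $\tilde u_n$ with $n=0$ in the odd case; the paper instead writes out the two cases separately with $\hat\gamma$, $\widetilde\beta$, $\widetilde K$, $\widetilde\Vv$. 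All hypotheses of the cited lemmas are verified correctly, including the connectedness and regularity of $V^o$ needed for Corollary~\ref{cor:kernel_nilpotent}.
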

	\begin{proof}
		First we introduce some notation. Since $V$ is the spectrum of a local ring, the vector bundle $\Vv$ is trivial. Let $\upsilon\colon \Th(\Vv) \xrightarrow{\simeq} \1_V$ be a trivialization. For every $m\in\N_0$ we consider the following commutative diagram.
		\[
		\begin{tikzcd}
		  \1^{0}(V;K^m \otimes \Vv) \ar[r,"j^*"] \ar[d,"\Upsilon"',"\cong"] &
                  \1^{0}(V^o;K^m \otimes \Vv) \ar[d,"\Upsilon^o_\xi"] \ar[r,"\partial"] &
                  \1^{1}_{\{\zeta\}}(V;K^m \otimes \Vv) \\
		  \1^{0}(V) \ar[r,"(-)|_\xi"] & \1^{0}(\xi) & 
		\end{tikzcd}
		\]
		Here $\Upsilon$ is induced by $\upsilon$, $\Upsilon^o_\xi=\Upsilon|_\xi\circ (-)|_{\xi}$ is the composition of the restriction to the generic point and the restricted isomorphism $\Upsilon|_{\xi}$, 
		\[
		\1^{0}(V^o;K^m \otimes \Vv) \xrightarrow{(-)|_{\xi}} \1^{0}(\xi;K^m \otimes \Vv) \xrightarrow{\Upsilon|_\xi} \1^{0}(\xi),
		\]
		and $j^*$ is the pullback. 
		
		The reasoning depends on the parity of $K$.
		
		\textbf{[$K$ is odd]} We have 
		\[
		\Upsilon(\alpha)|_\xi = {\Upsilon^o_\xi} (j^* (\alpha)) = {\Upsilon^o_\xi}( K\cdot \beta)  =  K\cdot {\Upsilon^o_\xi}(\beta)
		\]
		whence Lemma~\ref{lem:local_extension2} yields that there exists $\hat{\gamma} \in \1^0(V)$ such that $ \hat{\gamma}|_\xi = {\Upsilon^o_\xi}(\beta)$. Set $\gamma:=j^*\left(\Upsilon^{-1}(\hat{\gamma})\right)$.
		We have ${\Upsilon^o_\xi}(\gamma)={\Upsilon^o_\xi}(\beta)$ whence $\gamma|_{\xi} = \beta|_{\xi}$ and
		Corollary~\ref{cor:kernel_nilpotent} yields $\beta=\gamma+\nu$ for some nilpotent $\nu\in \1^{0}(V^o;\Vv)$. We have 
		\[
		\beta-\nu=\gamma =j^*\left(\Upsilon^{-1}(\hat{\gamma})\right)\in j^*(\1^{0}(V^o;\Vv)),\, K\cdot \nu = K\cdot \beta - K\cdot \gamma=j^*(\alpha)- K\cdot j^*\left(\Upsilon^{-1}(\hat{\gamma})\right) \in j^*(\1^{0}(V^o;\Vv)).
		\]
		Thus Lemma~\ref{lm:powerkernel} yields that there exists $m\in \N_0$ such that $\beta^{K^m}  \in j^*(\1^{0}(V^o;K^m \otimes \Vv))$ whence $\partial(\beta^{K^m}) =0$ by the exactness of the localization sequence.
		
		\textbf{[$K$ is even]}
		The assumption $\Theta(\beta|_\xi)=1$ yields that $\Upsilon^o_\xi(\beta) = (\Upsilon|_\xi \circ \Theta^{-1})(1)$ is an invertible element of $\1^{0}(\xi)$. Then Lemma~\ref{lem:local_extension1} yields that there exists $n\in \N$ and $\hat{\gamma}\in \1^0(V)$ such that $\hat{\gamma}|_\xi=\left({\Upsilon^o_\xi}(\beta)\right)^{K^n}={\Upsilon^o_\xi}(\beta^{K^n})$. Set $\widetilde{\beta}:=\beta^{K^n}$, $\widetilde{K}:=K^{K^n}$, $\widetilde{\Vv}:={K^n}\otimes \Vv$ and $\widetilde{\gamma}:=(j^*\circ \Upsilon^{-1})(\hat{\gamma})$. We have  $\widetilde{\gamma}|_{\xi} = \widetilde{\beta}|_{\xi}$ whence Corollary~\ref{cor:kernel_nilpotent} yields $\widetilde{\beta}=\widetilde{\gamma}+\widetilde{\nu}$ for some nilpotent $\widetilde{\nu}\in \1^{0}(V^o;\widetilde{\Vv})$. As in the case of $K$ being odd, we have
		\[
		\widetilde{\beta}-\widetilde{\nu},\, \widetilde{K}\cdot \widetilde{\nu} \in j^*(\1^{0}(V^o;\widetilde{\Vv})).
		\]
		Lemma~\ref{lm:powerkernel} yields that there exists $\widetilde{m}\in \N_0$ such that $\widetilde{\beta}^{\widetilde{K}^{\widetilde{m}}}  \in j^*(\1^{0}(V^o;\widetilde{K}^{\widetilde{m}} \otimes \widetilde{\Vv}))$ whence $\partial(\widetilde{\beta}^{\widetilde{K}^{\widetilde{m}}}) =0$ by the exactness of the localization sequence. The claim follows with $m=\widetilde{m}\cdot {K^n} +n$.
		\color{black}
	\end{proof}
	
	\begin{lem} \label{lm:strictify_restrictions}
		Let $\Vv$ be a virtual vector bundle of rank $0$ over a connected regular scheme $S$ over a field $F$. Let $\xi \in S$ be the generic point, $W\subseteq S$ be an open subset, $K\in \N$ and $\alpha\in \1^{0}(S;\Vv)$, $\beta\in \1^{0}(W;\Vv)$ be such that $\alpha|_\xi = K\cdot \beta|_\xi$. Then there exists $M\in \N_0$ and $\widetilde{\beta}\in \1^{0}(W;K^M\otimes \Vv)$ such that
		\begin{enumerate}
			\item $\alpha^{K^M}|_{W} = K^{K^M} \cdot \widetilde{\beta}$,
			\item $\widetilde{\beta}|_\xi = \beta^{K^M}|_\xi$.
		\end{enumerate}
	\end{lem}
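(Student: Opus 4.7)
My plan is to measure the discrepancy $\gamma := \alpha|_W - K\beta \in \1^{0}(W;\Vv)$. By hypothesis $\gamma|_\xi = 0$, and continuity of $\SH$ promotes this to $\gamma|_U = 0$ for some nonempty (hence dense) open $U \subseteq W$; Lemma~\ref{lm:kernel_nilpotent} applied to the regular $F$-scheme $W$ then produces an integer $n$ with $\gamma^n = 0$ in the graded ring $\bigoplus_{m \ge 0} \1^{0}(W; m\otimes \Vv)$.

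Next I would expand $\alpha^{K^M}|_W = (K\beta + \gamma)^{K^M}$ via the binomial theorem, valid because $K$ is central and $\beta, \gamma$ lie in the same degree $\Vv$ (of rank zero, so no Koszul sign obstructs commutativity). Since $\gamma^j = 0$ for $j \ge n$, only finitely many terms survive:
\[
\alpha^{K^M}|_W \;=\; \sum_{j=0}^{n-1} \binom{K^M}{j}\, K^{K^M-j}\, \beta^{K^M-j}\, \gamma^j.
\]
The combinatorial heart of the argument is then to choose $M$ large enough that $K^j \mid \binom{K^M}{j}$ for every $1 \le j \le n-1$. This follows from the identity $\binom{K^M}{j} = \tfrac{K^M}{j}\binom{K^M-1}{j-1}$, which gives $v_p\bigl(\binom{K^M}{j}\bigr) \ge M\,v_p(K) - v_p(j)$ for every prime $p$; since $v_p(j) \le \log_p(n-1)$ uniformly in $j < n$ and in $p\mid K$, taking $M$ sufficiently large secures $v_p\bigl(\binom{K^M}{j}\bigr) \ge j\,v_p(K)$ for all $p$ and all $j$ in the range.

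Writing $\binom{K^M}{j} = K^j c_j$ with $c_j \in \Z$, I can factor $K^{K^M}$ out of every term and define
\[
\widetilde{\beta} \;:=\; \beta^{K^M} + \sum_{j=1}^{n-1} c_j\, \beta^{K^M-j}\, \gamma^j \;\in\; \1^{0}(W; K^M\otimes \Vv).
\]
Then $\alpha^{K^M}|_W = K^{K^M}\cdot \widetilde{\beta}$ yields~(1), while~(2) is immediate from $\gamma|_\xi = 0$, which annihilates every $j\ge 1$ term at the generic point. The step I expect to demand the most care is the jump from $\gamma|_\xi=0$ to nilpotence of $\gamma$ on all of $W$ together with the clean application of the binomial expansion in the graded ring; once these are granted, the combinatorial divisibility estimate is routine and the rest of the proof is bookkeeping.
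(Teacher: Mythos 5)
Your proof is correct, but it takes a genuinely different route from the paper's. The paper argues by induction on the nilpotency order $r$ of $\nu := \alpha|_W - K\beta$: raising $\alpha|_W = K\beta + \nu$ to the $K$-th power gives $\alpha^K|_W = K^K(\beta^K + \beta^{K-1}\nu) + x\nu^2$, which is again of the same shape with a new ``error'' $x\nu^2$ whose nilpotency order has dropped to $\lceil r/2\rceil$, and then the inductive hypothesis is applied to $(\alpha^K,\beta^K+\beta^{K-1}\nu,K^K)$ over the twist $K\otimes\Vv$. You instead expand $(K\beta+\gamma)^{K^M}$ in one shot, truncate at $\gamma^n = 0$, and pick $M$ large enough that $K^j \mid \binom{K^M}{j}$ for $1\le j\le n-1$, after which $K^{K^M}$ factors out cleanly. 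Your divisibility estimate is essentially the same $p$-adic computation the paper carries out in the proof of Lemma~\ref{lm:powerkernel}, so your argument effectively unifies Lemma~\ref{lm:powerkernel} and Lemma~\ref{lm:strictify_restrictions} under a single binomial-divisibility computation, whereas the paper treats them separately (one directly, one by induction). The trade-off: the paper's induction avoids tracking explicit $p$-adic valuations of binomial coefficients in this lemma, at the cost of more bookkeeping of twists and exponents across the recursion; your version is arguably more transparent about what exponent $M$ is actually produced.

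Two very minor points of economy rather than correctness: since $S$ is connected and regular (hence irreducible), any nonempty open $W$ containing $\xi$ is itself irreducible with the same generic point, so you can invoke Corollary~\ref{cor:kernel_nilpotent} at $\xi$ directly without the intermediate continuity step (though that step is also fine); and you should note that the trivial cases $K=1$ and small $n$ are handled by taking $M=0$, so one may assume $K\ge 2$ and $n\ge 2$ when choosing $M\ge (n-1)+\log_2(n-1)$, which also guarantees $K^M\ge n$ so that the binomial sum genuinely truncates.
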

	\begin{proof}
		Put $\nu=\alpha|_W - K\cdot \beta$. Then by the assumption we have $\nu|_\xi =0 $ whence Corollary~\ref{cor:kernel_nilpotent} yields that $\nu$ is nilpotent. We proceed by induction on $r$ such that $\nu^r=0$. If $r=1$ then $\alpha|_W = K\cdot \beta$ and we are done, so suppose $r>1$. Raising the equality $\alpha|_W =K\cdot \beta+\nu$ to the $K$-th power we obtain
		\[
		(\alpha^K)|_W = K^K (\beta^K + \beta^{K-1} \nu) + x\cdot \nu^2.
		\]
		Put 
		\[
		\Vv'=K\otimes \Vv,\quad \alpha'=\alpha^K\in \1^{0}(X;\Vv'),\quad \beta'= \beta^K + \beta^{K-1} \nu\in \1^{0}(W;\Vv'),\quad K' = K^K.
		\]
		Then we have $\alpha'|_W = K' \cdot \beta' + x\cdot \nu^2$ and since $\nu|_\xi=0$ then $\beta'|_\xi = \beta^K|_\xi$ and $\alpha'|_\xi = K' \cdot \beta'|_\xi$. Moreover, $(x\cdot \nu^2)^{\lceil r/2\rceil}=0$ thus we may apply induction whence there exists $M'\in\N_0$ and $\widetilde{\beta}'\in\1^{0}(W;(K')^{M'}\otimes \Vv')$ such that $(\alpha')^{(K')^{M'}}|_{W} = (K')^{(K')^{M'}} \cdot \widetilde{\beta}'$ and $\widetilde{\beta}'|_\xi = (\beta')^{(K')^{(M')}}|_\xi$. Then the claim follows with $\widetilde{\beta} = \widetilde{\beta}'$ and $M= K\cdot M'+1$.
	\end{proof}

	\begin{lem} \label{lem:mod_k_Dold}
		Let $\Vv$ be a virtual vector bundle of rank $0$ over a connected regular scheme $S$ over a field $F$, let $\xi \in S$ be the generic point and $F(\xi)$ be the field of rational functions on $S$. Suppose that there exists $\alpha\in \1^{0}(S;\Vv)$, a trivialization $\theta\colon \Th(\Vv|_\xi) \xrightarrow{\simeq}  \1_\xi$ and $k\in \Z$ such that
		\[
		\Theta(\alpha|_\xi) = k \in \Z\subseteq \GW(F(\xi)).
		\]
		Then there exists $N\in \N_0$ and $\beta\in \1^{0}(S;k^N\otimes \Vv)$ such that
		\[
		\Theta(\beta|_\xi) = 1\in \GW(F(\xi)).
		\]
	\end{lem}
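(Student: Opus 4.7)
The plan is to construct $\beta$ by Noetherian induction on the open subset $U\subseteq S$ on which a candidate is defined, extending $U$ one step at a time using Lemma~\ref{lm:power_partial_zero} and Mayer--Vietoris, and invoking Lemma~\ref{lm:strictify_restrictions} to maintain the structural link to powers of $\alpha$. Since $S$ is connected and regular, hence irreducible, the generic point $\xi$ lies in every non-empty open, which will be crucial for the gluing.

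To initiate the induction, continuity of the motivic stable homotopy category \cite[Proposition~C.12(4)]{HoyoisGLV} yields $\1^0(\xi;\Vv) = \colim_{W\ni\xi}\1^0(W;\Vv)$, so the distinguished element $\Theta^{-1}(1)$ lifts to some $\beta_0\in\1^0(W_0;\Vv)$ on an open $W_0\ni\xi$. Then $\alpha|_\xi = k\cdot\beta_0|_\xi$ in $\1^0(\xi;\Vv)$, and Lemma~\ref{lm:strictify_restrictions} with $K=k$ produces $M_0\in\N_0$ and $\widetilde\beta_0\in\1^0(W_0;k^{M_0}\otimes\Vv)$ satisfying $\alpha^{k^{M_0}}|_{W_0} = k^{k^{M_0}}\cdot\widetilde\beta_0$ and $\Theta(\widetilde\beta_0|_\xi)=1$. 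This is the base case of the inductive hypothesis, namely that there exist an open $U\supseteq W_0$, an exponent $N\in\N_0$, and $\beta_U\in\1^0(U;k^N\otimes\Vv)$ with $\alpha^{k^N}|_U = k^{k^N}\cdot\beta_U$ and $\Theta(\beta_U|_\xi)=1$.

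For the inductive step, if $U\neq S$ I would choose a maximal point $\zeta$ of $S\setminus U$, set $V=\Spec\sO_{S,\zeta}$ and $V^o = V-\{\zeta\}\subseteq U$, and apply Lemma~\ref{lm:power_partial_zero} with virtual bundle $k^N\otimes\Vv$, elements $\alpha^{k^N}|_V$ and $\beta_U|_{V^o}$, and $K=k^{k^N}$. This produces $m\in\N_0$ with $\partial(\beta_U|_{V^o}^{K^m})=0$, and exactness of the localization sequence lifts $\beta_U|_{V^o}^{K^m}$ to some $\beta_V\in\1^0(V;K^mk^N\otimes\Vv)$. By continuity, $\beta_V$ spreads to $\beta_{U_\zeta}\in\1^0(U_\zeta;K^mk^N\otimes\Vv)$ on an open $U_\zeta\ni\zeta$. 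Both $\beta_U^{K^m}$ and $\beta_{U_\zeta}$ restrict to $(\beta_U|_{V^o})^{K^m}$ on $V^o$, so after shrinking $U_\zeta$ (using continuity, since $V^o$ is the filtered limit of $U\cap U_\zeta$ as $U_\zeta$ shrinks) they agree on $U\cap U_\zeta$, and Mayer--Vietoris glues them to some $\beta'\in\1^0(U\cup U_\zeta;K^mk^N\otimes\Vv)$. The relation $\alpha^{K^mk^N}|_{U\cup U_\zeta} = k^{K^mk^N}\cdot\beta'$ is then satisfied on $U$ and at $\xi$, and a final invocation of Lemma~\ref{lm:strictify_restrictions} on $U\cup U_\zeta$ upgrades $\beta'$, at the cost of another $k$-power, to a $\beta_{U'}$ satisfying the full inductive hypothesis on $U'=U\cup U_\zeta$.

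Since $S$ is Noetherian, the descending chain of closed complements $S\setminus U$ must terminate, so the induction ends with $U=S$ and yields the required $\beta\in\1^0(S;k^N\otimes\Vv)$. The main technical challenge will be the bookkeeping of the growing $k$-powers and verifying that Mayer--Vietoris gluing is compatible with the structural relation to $\alpha$ beyond the generic point; this is absorbed into the repeated use of Lemma~\ref{lm:strictify_restrictions}, which itself relies on Corollary~\ref{cor:kernel_nilpotent} to dispose of the nilpotent discrepancies appearing at each step.
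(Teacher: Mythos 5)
Your proposal is correct and follows the paper's proof in all essential respects: Noetherian induction on the open set carrying a candidate $\beta$, Lemma~\ref{lm:strictify_restrictions} to set up a power-structural relation to $\alpha$, Lemma~\ref{lm:power_partial_zero} to kill the obstruction to extending past a maximal point $\zeta$ of the complement, and continuity to pass from $V=\Spec\sO_{S,\zeta}$ back to opens. The one organizational difference is that you glue via Mayer--Vietoris on $U\cap U_\zeta$, whereas the paper instead passes to the filtered colimit of localization sequences over opens $U\supseteq\{\zeta\}\cup W_i$ and compares it to the localization sequence on $V$. Your route works, but the step ``$V^o$ is the filtered limit of $U\cap U_\zeta$'' is only true after first shrinking $U_\zeta$ so that $U_\zeta\cap(S\setminus U)\subseteq\overline{\{\zeta\}}$ (i.e.\ avoiding the other irreducible components of $S\setminus U$), and one should also note that the transition maps are affine because $U\cap U_{\zeta'}=U_{\zeta'}\times_{U_\zeta}(U\cap U_\zeta)$; the paper's colimit-of-localization-sequences formulation sidesteps these verifications. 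A second, purely cosmetic, difference is that you carry the structural relation $\alpha^{k^N}|_U=k^{k^N}\cdot\beta_U$ in the inductive hypothesis (hence needing a final invocation of Lemma~\ref{lm:strictify_restrictions} after gluing), while the paper keeps only $\Theta(\beta_i|_\xi)=1$ and re-derives the structural relation at the start of each inductive step.
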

	\begin{proof}
	For the proof we will inductively construct triples $(W_i,N_i,\beta_i)$, $i\in \N_0$, where
		\begin{itemize}
			\item $W_i\subseteq S$ is an open subset,
			\item $N_i\in \N_0$,
			\item $\beta_i \in \1^{0}(W_i;k^{N_i}\otimes \Vv)$,
		\end{itemize}
		satisfying
		\begin{enumerate}
			\item
			$W_{i-1}\subseteq W_{i}$ and either $W_{i-1}\subsetneq W_{i}$ or $W_{i-1}=W_{i}=S$,
			\item
			$\Theta(\beta_i|_\xi)=1\in\GW(F(\xi))$.
		\end{enumerate} 
		Since $S$ is Noetherian we have $W_n=S$ for some $n$ and the claim follows with $N=N_n,\,\beta=\beta_n$.
		
		Let $W_0\subseteq S$ be an open subset such that there exists a trivialization $\theta'\colon \Th(\Vv|_{W_0}) \xrightarrow{\simeq} \1_{W_0}$ satisfying $\theta'|_{\xi}=\theta$. Set $N_0=0$. Let $1\in \1^{0}(W_0)$ be the ring unit and denote
		\[
		\beta_0:=(\Theta')^{-1}(1)\in \1^{0}(W_0;\Vv).
		\]
		The condition $(2)$ for the triple $(W_0,N_0,\beta_0)$ clearly holds. The condition (1) is vacuous.
		
		Now suppose that we have already constructed $(W_{i},N_{i},\beta_{i})$. We may assume $W_i\neq S$; otherwise let $(W_{i+1},N_{i+1},\beta_{i+1})=(W_{i},N_{i},\beta_{i})$. By the assumption we have 
		\[
		\alpha^{k^{N_{i}}}|_\xi= \Theta^{-1} \left( k^{k^{N_{i}}}\right) = k^{k^{N_{i}}}\cdot \Theta^{-1} \left( 1\right) = k^{k^{N_{i}}} \cdot \beta_{i}|_\xi.
		\]
		Applying Lemma~\ref{lm:strictify_restrictions} to 
		\[
		\alpha^{k^{N_{i}}} \in \1^{0}(S;k^{N_i}\otimes \Vv),\quad \beta_i \in \1^{0}(W_i;k^{N_i}\otimes \Vv),\quad K=k^{k^{N_i}},
		\]
		we obtain $M\in \N$ and $\widetilde{\beta}\in \1^{0}\left(W_i;\widetilde{\Vv}\right)$, where $\widetilde{\Vv}=\left(K^M\cdot k^{N_i} \right)\otimes \Vv$, such that for 
		\[
		\widetilde{\alpha} = \left(\alpha^{k^{N_i}}\right)^{ K^M},\quad \widetilde{K}=K^{ K^M},
		\]
		we have
		\[
		\widetilde{\alpha} |_{W_i} = \widetilde{K} \cdot \widetilde{\beta}, \qquad \Theta(\widetilde{\beta}|_\xi)=1.
		\]
		Let $\zeta\in S$ be the generic point of an irreducible component of $S- W_i$ and consider the following commutative diagram.
		\[
		\begin{tikzcd}
		  \varinjlim\limits_{(\{\zeta\} \cup W_i)\subseteq U\subseteq S}\1^{0}\left(U;\widetilde{\Vv}\right) \ar[r]\ar[d]&
                  \1^{0}\left(W_i;\widetilde{\Vv}\right) \ar[r,"\partial"] \ar[d] &
                  \varinjlim\limits_{(\{\zeta\} \cup W_i)\subseteq U\subseteq S}\1^{1}_{U- W_i}\left(U;\widetilde{\Vv}\right) \ar[d,"f"] \\
		  \1^{0}\left(V;\widetilde{\Vv}\right) \ar[r,"j^*"] &
                  \1^{0}\left(V^o;\widetilde{\Vv}\right) \ar[r,"\partial"] &
                  \1^{1}_{\{\zeta\}}\left(V;\widetilde{\Vv}\right)
		\end{tikzcd}
		\]
		Here
		\begin{itemize}
			\item $V=\Spec \sO_{S,\zeta}$, $V^o=V-\{\zeta\}$,
			\item both horizontal rows are localization sequences,
			\item the vertical morphisms are induced by the canonical morphisms $V\to U$.
		\end{itemize}
		Applying Lemma~\ref{lm:power_partial_zero} to $\widetilde{\alpha}|_V$, $\widetilde{\beta}|_{V^o}$, $\widetilde{\theta}=\theta^{\oplus K^M\cdot k^{N_i}}$ and $\widetilde{K}$ we obtain $m\in \N_0$ such that
		\[
		\partial\left({\widetilde{\beta}^{\widetilde{K}^m}|_{V^o}}\right)=0.
		\]
		Note that by continuity \cite[Proposition~C.12(4)]{HoyoisGLV} morphism $f$ is an isomorphism, whence commutativity of the above diagram yields $\partial\left({\widetilde{\beta}^{\widetilde{K}^m}}\right)=0$. Since the upper row is exact then for some open subset $U\subseteq S$ satisfying $(\{\zeta\} \cup W_i)\subseteq U$ there exists $\beta_{i+1} \in \1^{0}(U;\widetilde{K}^m\otimes \widetilde{\Vv})$ such that $\beta_{i+1}|_{W_i}= {\widetilde{\beta}^{\widetilde{K}^m}}$. We set
		\[
		(W_{i+1},N_{i+1},\beta_{i+1}) = (U, k^{N_i}\cdot (m\cdot k^{M\cdot k^{N_i}}+M)+N_i,\beta_{i+1}).
		\]
		The first property is satisfied since $(\{\zeta\} \cup W_i) \subset W_{i+1}$ and $\zeta \in W_{i+1}- W_i$. The second property is satisfied since
		\[
		\beta_{i+1}|_{W_i} = {\widetilde{\beta}^{\widetilde{K}^m}}, \quad \Theta(\widetilde{\beta}|_\xi) =1. \qedhere
		\]	
	\end{proof}

	\begin{thm} \label{thm:modkDold}
	  Let $S$ be a connected regular scheme over a field $F$ with the generic point $\xi\in S$, let $\sE_1, \sE_2$ be vector bundles over $S$ of the same rank and let $k \in \Z$. Suppose that there exists a morphism $\phi\in \Hom_{\SH(S)}( \Th(\sE_1), \Th(\sE_2))$ such that $\aged_\xi(\phi)=k\in \Z\subseteq \GW(F(\xi))$.
          Then for some $N\in \N_0$ there exists an equivalence
		 $\Th(k^N\otimes \sE_1) \cong \Th(k^N\otimes \sE_2) $ in $\SH(S)$.
	\end{thm}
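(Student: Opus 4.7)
The plan is to reduce the statement to Lemma~\ref{lem:mod_k_Dold} by a straightforward repackaging via Thom-spectrum duality, and then extract the desired equivalence via the degree criterion of Lemma~\ref{lem:generic_iso}. Essentially all the real work has already been done by the time we reach the theorem: the bulk of the argument lies in Lemma~\ref{lem:mod_k_Dold}, which inductively extends a generic trivialization across the base.

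First I would set $\Vv := \sE_2 \ominus \sE_1$, a virtual vector bundle of rank $0$ over $S$. Invoking the $\wedge$-invertibility of Thom spectra (as used in the proof of Lemma~\ref{lm:kernel_nilpotent}) produces an identification
\[
\1^{0}(S;\Vv) \cong \Hom_{\SH(S)}(\Th(\sE_1), \Th(\sE_2))
\]
under which $\phi$ corresponds to a class $\alpha \in \1^{0}(S;\Vv)$. Next, I would fix trivializations $\theta_1\colon \Th((\sE_1)|_\xi) \xrightarrow{\sim} \Sigma^r_\T \1_\xi$ and $\theta_2\colon \Th((\sE_2)|_\xi) \xrightarrow{\sim} \Sigma^r_\T \1_\xi$ that witness $\aged_\xi(\phi) = k$, and assemble them into a trivialization $\theta := \theta_2 \wedge \theta_1^{\vee}\colon \Th(\Vv|_\xi) \xrightarrow{\sim} \1_\xi$. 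Under this trivialization the hypothesis $\aged_\xi(\phi) = k$ translates precisely into the statement $\Theta(\alpha|_\xi) = k \in \Z \subseteq \GW(F(\xi))$.

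With these identifications in place, I would invoke Lemma~\ref{lem:mod_k_Dold} applied to $(\Vv, \alpha, \theta, k)$. This yields $N \in \N_0$ and a class $\beta \in \1^{0}(S; k^N \otimes \Vv)$ satisfying $\Theta(\beta|_\xi) = 1$ (where $\Theta$ is now induced by $\theta^{\oplus k^N}$). Since $k^N \otimes \Vv = (k^N \otimes \sE_2) \ominus (k^N \otimes \sE_1)$, the same $\wedge$-invertibility identification shows that $\beta$ corresponds to a morphism
\[
\psi\colon \Th(k^N \otimes \sE_1) \to \Th(k^N \otimes \sE_2)
\]
in $\SH(S)$, and by construction (using the trivializations $\theta_i^{\oplus k^N}$) this morphism satisfies $\aged_\xi(\psi) = 1$.

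Finally, since $S$ is connected and regular, hence irreducible, $\xi$ is its unique generic point, so the hypothesis (1) of Lemma~\ref{lem:generic_iso} is satisfied by $\psi$. The implication $(1) \Rightarrow (3)$ of that lemma then promotes $\psi$ to an equivalence, giving the desired isomorphism $\Th(k^N \otimes \sE_1) \cong \Th(k^N \otimes \sE_2)$ in $\SH(S)$. No serious obstacle arises in this reduction itself; the genuine difficulty was already absorbed into Lemma~\ref{lem:mod_k_Dold}, whose proof rested on the nilpotence statement Corollary~\ref{cor:kernel_nilpotent} and a careful iterative extension argument along the codimension filtration of $S$.
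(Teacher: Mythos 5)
Your proposal is correct and follows exactly the paper's own argument: set $\Vv = \sE_2 \ominus \sE_1$, translate $\phi$ to $\alpha\in\1^0(S;\Vv)$ via $\wedge$-invertibility of Thom spectra, feed $\alpha$ into Lemma~\ref{lem:mod_k_Dold} to obtain $\beta$ with $\Theta(\beta|_\xi)=1$, and conclude via Lemma~\ref{lem:generic_iso}. The only difference is that you spell out the trivializations $\theta_1,\theta_2$ and the irreducibility of $S$ in slightly more detail, but the reduction is the same.
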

	\begin{proof}
		Let $\phi$ be as in the assumption, write $\Vv:= \sE_2\ominus \sE_1$ and let $\theta\colon  \Th(\Vv|_\xi) \xrightarrow{\simeq} \1_\xi$ be the respective trivialization. Let $\alpha$ be the element corresponding to $\phi$ under the isomorphism
		\[
		\alpha\in\1^{0}(S;\Vv)=\Hom_{\SH(S)}(\1_S,  \Th(\sE_2)\wedge  \Th(\sE_1)^\vee )\cong \Hom_{\SH(S)}( \Th(\sE_1) , \Th(\sE_2))\ni \phi.
		\]
		Then $\alpha$ satisfies the assumption of Lemma~\ref{lem:mod_k_Dold} whence there exists $N\in \N_0$ and $\beta\in\1^{0}(S;k^N\otimes \Vv)$ such $\Theta(\beta|_\xi)=1$. Lemma~\ref{lem:generic_iso} implies that the homomorphism $\phi'$ corresponding to $\beta$ under the isomorphism
		\[
		\phi'\in \Hom_{\SH(S)}(\Th(k^N\otimes\sE_1) , \Th(k^N\otimes\sE_2) )\cong \1^{0}(S;k^N\otimes \Vv) \ni \beta
		\]
		is an equivalence.
	\end{proof}

	\section{Brown's trick}\label{sec:browns-trick}

        The Becker-Gottlieb proof \cite{becker-gottlieb.adams} of the classical Adams conjecture uses a stable transfer map for certain fiber bundles and the deep fact that the classifying space of the monoid of self-homotopy equivalences of topological spheres is an infinite loop space.
        Brown's trick from \cite{Brown73} circumvents this fact.
        In the motivic situation it is unclear (at least to the authors) whether the classifying space of the monoid of self-$\A^1$-equivalences of motivic spheres over a field is an infinite $\P^1$-loop space; see \cite[Section 16]{norms}.
        Therefore a rather ad hoc motivic version of Brown's trick given below as Lemma~\ref{lem:Brown_trick_adhoc}, designed specifically for the transfer argument used in the proof of Theorem~\ref{thm:Adams}, comes in handy.
        Fortunately the stable transfer part has been taken care of by Hoyois and Levine. It relies on the following abstract notion of duality.
        
        \begin{defn}[{\cite{DP} and \cite{May01}}]
		Let $(\mathcal{C},\wedge,\1_{\mathcal{C}})$ be a symmetric monoidal category. An object $\sX\in \mathcal{C}$ is called \textit{strongly dualizable} if there exists $\sY\in\mathcal{C}$ and morphisms
		\[
		\mathrm{coev}_{\sX}\colon \1_{\mathcal{C}} \to {\sX}\wedge \sY,\quad
		\mathrm{ev}_{\sX}\colon \sY\wedge {\sX} \to \1_{\mathcal{C}} 
		\]
		such that the compositions
		\begin{gather*}
		{\sX}\xrightarrow{\simeq} \1_{\mathcal{C}} \wedge {\sX}\xrightarrow{\mathrm{coev}_{\sX}\wedge \id_{\sX}} {\sX}\wedge \sY\wedge {\sX} \xrightarrow{\id_{\sX}\wedge \mathrm{ev}_{\sX}} {\sX}\wedge \1_{\mathcal{C}} \xrightarrow{\simeq} {\sX},\\  \sY\xrightarrow{\simeq} \sY\wedge \1_{\mathcal{C}}\xrightarrow{\id_{\sY}\wedge \mathrm{coev}_{\sX}} \sY\wedge {\sX}\wedge \sY \xrightarrow{\mathrm{ev}_{\sX}\wedge \id_{\sY}} \1_{\mathcal{C}} \wedge \sY \xrightarrow{\simeq} \sY
		\end{gather*}
		are the respective identities. Let ${\sX}\in \mathcal{C}$ be strongly dualizable with the dual $\sY$, then the \textit{Euler characteristic} $\chi^{\mathcal{C}}({\sX})\in \End_{\mathcal{C}}(\1_{\mathcal{C}})$ is the composition
		\[
		\chi^{\mathcal{C}}({\sX})\colon \1_{\mathcal{C}} \xrightarrow{\mathrm{coev}_{\sX}} {\sX}\wedge \sY \xrightarrow{\simeq} \sY\wedge {\sX} \xrightarrow{\mathrm{ev}_{\sX}} \1_{\mathcal{C}}.
		\]
		Let $({\sX},\Delta\colon {\sX}\to {\sX}\wedge {\sX},\mu\colon {\sX}\to \1_{\mathcal{C}})$ be a coalgebra object in a symmetric monoidal category $(\mathcal{C},\wedge,\1_{\mathcal{C}})$ and suppose that ${\sX}$ is strongly dualizable with the dual $\sY$. Then the \textit{transfer} $\bglt$ is the composition
		\[
		\bglt \colon \1_{\mathcal{C}} \xrightarrow{\mathrm{coev}_{\sX}} {\sX}\wedge \sY \xrightarrow{\simeq} \sY\wedge {\sX} \xrightarrow{\id_{\sY}\wedge \Delta} \sY\wedge {\sX}\wedge {\sX} \xrightarrow{ \mathrm{ev}_{\sX}\wedge \id_{\sX}} \1_{\mathcal{C}}\wedge {\sX} \xrightarrow{\simeq} {\sX}.
		\]
	\end{defn}
		It is straightforward to check that $\mu \circ \bglt = \chi^{\mathcal{C}}({\sX})$ \cite[Remark~4.4]{May01}.
                The Euler characteristic and transfers are natural with respect to symmetric monoidal functors.
		If $\mathcal{C}=\SH$ is the classical stable homotopy category and $M$ is a compact manifold (or, more generally, a topological space of finite homotopy type) then $\chi^{\SH}({\Sigma^\infty M_+})=\chi^{\mathrm{top}}(M)$ is the classical topological Euler characteristic and the transfer is a special case of the one introduced in \cite{becker-gottlieb76}.

\begin{defn} \label{defn:bglt}
	Let $f\colon X\to S$ be a smooth morphism and $\Lambda\subseteq \Q$ be a subring. Then $f_\sharp f^*\1_S$ has a canonical coalgebra structure with $\Delta\colon f_\sharp f^*\1_S \to f_\sharp f^*\1_S\wedge f_\sharp f^*\1_S$ given by the op-lax monoidality of $f_\sharp$ and $\mu\colon f_\sharp f^*\1_S \to \1_S$ being the counit of the adjunction. Suppose that $f_\sharp f^*\1_S$ is strongly dualizable in $\SH(S)\otimes \Lambda$ with the dual $D$. The \textit{Becker-Gottlieb-Hoyois transfer} {\cite[Definition~1.5]{levine.euler}} $\bglt_f$ is the transfer for $f_\sharp f^*\1_S$, i.e. the composition
	\[
	\1_S \xrightarrow{\mathrm{coev}} f_\sharp f^*\1_S\wedge D \xrightarrow{\simeq} D\wedge f_\sharp f^*\1_S \xrightarrow{\id \wedge \Delta} D \wedge f_\sharp f^*\1_S\wedge f_\sharp f^*\1_S \xrightarrow{\mathrm{ev} \wedge \id } \1_S\wedge f_\sharp f^*\1_S \xrightarrow{\simeq} f_\sharp f^*\1_S.
	\]	
	Let $S$ be a scheme over a field of exponential characteristic $e$ and $f\colon X\to S$ be a Nisnevich locally trivial fiber bundle with the fiber $Y\in \Sm_F$, then $f_\sharp f^* \1_S$ is strongly dualizable in $\SH(S)[\frac{1}{e}]$ by the same argument as in the proof of \cite[Proposition~1.2.2]{levine.euler} combined with \cite[Theorem~3.2.1]{elden-adeel-perfection}. In particular, for such $f$ the Becker-Gottlieb-Hoyois transfer $\bglt_f\colon \1_S \to f_\sharp f^* \1_S$ is defined in $\SH(S)[\frac{1}{e}]$.
\end{defn}   	

\begin{defn}
	Let $F$ be a field. Then there are the following realization functors.

	$\mathbf{[DM]}$ There is a \textit{DM realization} functor \cite[Section~2]{Rondigs:2008}
		\[
		\Real_\DM\colon \SH(F) \to \DM(F).
		\]
		This is symmetric monoidal and satisfies $\Real_{\DM} (\Sigma_\T^\infty X_+)= M(X)$ where $M(X)$ is the Voevodsky motive of $X\in \Sm_F$ \cite{mvw}.
		
	$\mathbf{[Q_\ell]}$ 
		Let $\ell\neq \operatorname{char} F$ be a prime. Then there is an \textit{$\ell$-adic realization} contravariant functor
		\[
		\Real_{\Q_\ell}\colon \SH(F) \to D(\Q_\ell).
		\]
		It is symmetric monoidal and $H^*(\Real_{\Q_\ell}(\Sigma_\T^\infty X_+))\cong \varprojlim_n \mathrm{H}^*_{\et}(X,\Z/\ell^n\Z)\otimes_{\Z}\Q$ for $X\in\Sm_F$.
		Moreover, in view of \cite[Corollary~2.39]{Robalo}, these properties (together with the standard properties of $\ell$-adic cohomology) may be used to define the realization functor $\Real_{\Q_\ell}$. Alternatively, one can use \cite{AyoubEtale} or \cite{CDetale}. It is straightforward to see that the $\ell$-adic realization functor factors as
		\[
		\Real_{\Q_\ell}\colon \SH(F) \xrightarrow{\Real_{\DM}} \DM(F) \xrightarrow{\Real_{\Q_\ell}^{\DM}} D(\Q_\ell)
		\]
		with $\Real_{\Q_\ell}^{\DM}$ having similar properties to $\Real_{\Q_\ell}$.
		
		The functor $\Real_{\Q_\ell}$ is compatible with base change whence the following diagram commutes.
		\[
		\begin{tikzcd}
			\End_{\SH(F)}(\1_F) \ar[dd,swap,"\Real_{\Q_\ell}"] \ar[dr,pos=0.6,"g^*_{\overline{F}/F}"] &  &   & \GW(F) \ar[lll,swap,"\simeq"] \ar[dl,pos=0.6,"g^*_{\overline{F}/F}"] \ar[dd,"\rk"]\\
			 &  \End_{\SH(\overline{F})}(\1_{\overline{F}})  \ar[dl,"\Real_{\Q_\ell}"] &  \GW(\overline{F}) \ar[l,swap,"\simeq"] \ar[dr,"\rk","\simeq"'] & \\
			\End_{\D (\Q_\ell)}(\Q_\ell)&  & \Q_\ell \ar[ll,swap,"\simeq"] & \Z \ar[l] 
		\end{tikzcd}
		\]
		Here $\overline{F}$ is the algebraic closure of $F$ and $g^*_{\overline{F}/F}$ are the respective extensions of scalars. Thus if $\alpha_\Theta \in \GW(F)$ corresponds to $\Theta\in \End_{\SH(F)}(\1_F)$ then $\rk \alpha_\Theta = \Real_{\Q_\ell}(\Theta)$.
		
		\textbf{[R\'et]}
		Suppose that $F$ is formally real, i.e. $-1$ is not a sum of squares. Let $\sgn\colon \GW(F) \to \Z$ be a signature \cite[Chapter~2, Definition~4.5]{Scharlau85} corresponding to some ordering of $F$ and let $E/F$ be the respective real closure. Then there is a \textit{real \'etale realization} functor
		\[
		\Real_{\ret}\colon \SH(F) \to \SH
		\]
		given as the composition $\SH(F) \xrightarrow{g^*_{E/F}} \SH(E) \xrightarrow{R} \SH$. Here $\SH$ is the classical stable homotopy category and $R$ is the composition $\SH(E) \xrightarrow{} \SH(E)[\rho^{-1}] \cong \SH(E_{\ret}) \cong \SH$ of \cite[Theorem~35]{Bachmann18} (note that since $E$ is real closed then its small \ret-site is trivial). It is straightforward to see that if $\alpha_\Theta \in \GW(F)$ corresponds to $\Theta\in \End_{\SH(F)}(\1_F)$ then $\sgn \alpha_\Theta = \Real_{\ret}(\Theta)$ (cf. \cite[Remark~2.3]{Levine20}).
		
		If $F=\RR$ is the field of real numbers, then $\Real_{\ret}$ is canonically isomorphic \cite[Proposition~36]{Bachmann18} to the \textit{real Betti realization} functor
		\[
		\Real_{B\RR}\colon \SH(\RR) \to \SH
		\]
		which is symmetric monoidal and satisfies $\Real_{B\RR} (\Sigma^\infty_\T X_+)= \Sigma^\infty X(\RR)_+$ for $X\in\Sm_\RR$, where $X(\RR)$ is the set of real points of $X$ with the strong topology.
\end{defn}

\begin{lem} \label{lem:quadratic_invertible}
	Let $F$ be a field of exponential characteristic $e$ and $\alpha \in \GW(F)[\frac{1}{e}]$. Then $\alpha$ is a unit if and only if $\rk \alpha$ and all the possible signatures $\sgn \alpha$ (see \cite[Chapter~2, Definition~4.5]{Scharlau85}) are units in $\Z[\frac{1}{e}]$.
\end{lem}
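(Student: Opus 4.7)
The forward direction is automatic: since $\rk$ and each signature $\sgn_P$ are ring homomorphisms $\GW(F)[\tfrac{1}{e}]\to \Z[\tfrac{1}{e}]$, they carry units to units.

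For the converse the plan is to invoke the general principle that an element of a commutative ring is a unit if and only if it lies in no maximal ideal, combined with the classical description of the maximal ideal spectrum of $\GW(F)$. Concretely, by a theorem of Lorenz--Leicht, extended by Knebusch--Scharlau--Wolff (see for instance Chapter~3 of \cite{Scharlau85}), every maximal ideal of $\GW(F)$ has one of the two shapes $\mathfrak{m}_p := \rk^{-1}(p\Z)$ or $\mathfrak{n}_{P,p} := \sgn_P^{-1}(p\Z)$, where $p$ is a rational prime and $P$ is an ordering of $F$, with only odd $p$ needed in the second family because the congruence $\rk(\alpha)\equiv \sgn_P(\alpha)\pmod 2$ (which holds on generators $\langle a\rangle$ and extends by additivity) forces $\mathfrak{m}_2=\mathfrak{n}_{P,2}$ for every ordering.

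Upon inverting $e$, the maximal ideals of $\GW(F)[\tfrac{1}{e}]$ are precisely the extensions of those $\mathfrak{m}_p$ and $\mathfrak{n}_{P,p}$ with $p\nmid e$, the remaining ones becoming the unit ideal since $p$ is then invertible. Thus $\alpha\in \GW(F)[\tfrac{1}{e}]$ is a unit if and only if $p\nmid \rk(\alpha)$ and $p\nmid \sgn_P(\alpha)$ for every $p\nmid e$ and every ordering $P$, which is exactly the condition that $\rk(\alpha)$ and every $\sgn_P(\alpha)$ are units in $\Z[\tfrac{1}{e}]$. Note that in the formally non-real case the signature family is empty, while in the characteristic-two case $\mathfrak{m}_2$ drops out after localisation and only the rank condition survives, so the statement degenerates correctly in both extremes.

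The only genuine input is the classification of maximal ideals of $\GW(F)$; once this is cited the result is a formal consequence of basic spectrum theory of localised commutative rings. Consequently the main obstacle is not conceptual but bibliographic --- locating the precise reference and verifying that it applies in the mild generality needed here. An alternative route, closer in spirit to \lemref{lem:local_extension1}, would be to use Pfister's local-global principle to show that the kernel of $(\rk, (\sgn_P)_P)\colon \GW(F)\to \Z\times\prod_P \Z$ is nilpotent and then split off a unit factor, but this ultimately amounts to re-proving the same maximal-ideal classification.
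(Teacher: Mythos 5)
Your argument is correct, but the route you take is genuinely different from the paper's. You reduce to the prime spectrum: an element of $\GW(F)[\tfrac{1}{e}]$ is a unit iff it lies in no prime, primes of the localisation correspond to primes of $\GW(F)$ not containing $e$, and then you feed in the Harrison/Lorenz--Leicht classification of the prime ideals of $\GW(F)$ as the $\rk^{-1}(p\Z)$ and $\sgn_P^{-1}(p\Z)$ (with the $p=0$ members being the minimal primes). The paper avoids the spectrum entirely and instead exhibits $\alpha$ as a unit directly: it splits into the case $F$ not formally real, where $\ker\rk=I(F)$ is the nilradical of $\GW(F)$ by results of Baeza, so $\alpha=\pm e^m+(\text{nilpotent})$; and the case $F$ formally real, where necessarily $e=1$, and after replacing $\alpha$ by $\alpha^2$ one has $\rk\alpha=1$ and $\sgn_P\alpha=1$ for every $P$, whence $\alpha-1$ is nilpotent by \cite[Ch.~2, Thm.~7.10, Cor.~2.2]{Scharlau85} and $\alpha=1+(\text{nilpotent})$ is a unit. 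Both proofs ultimately rest on the same classical fact --- that the simultaneous kernel of rank and all signatures is the nilradical of $\GW(F)$ --- but the paper packages it as an explicit unit-plus-nilpotent decomposition while you package it as a description of $\Spec$. The alternative route you sketch at the end (Pfister local-global, nilpotent kernel, split off a unit) is essentially the paper's. Two small soft spots in your version: (i) the classification you cite is usually stated for the Witt ring $W(F)$ rather than $\GW(F)$, and transporting it uses the pullback description $\GW(F)\cong W(F)\times_{\Z/2}\Z$ via rank and reduction mod hyperbolics, which you do not spell out (this is precisely the ``bibliographic obstacle'' you flag yourself); (ii) to pass from ``maximal ideals of $\GW(F)[\tfrac{1}{e}]$ are extensions of the surviving $\mathfrak{m}_p,\mathfrak{n}_{P,p}$'' you should note that the height-zero primes $\ker\rk$ and $\ker\sgn_P$, which also survive localisation, are properly contained in some surviving $\mathfrak{m}_p$ or $\mathfrak{n}_{P,p}$ (there are primes $p\nmid e$) and hence are not maximal in the localisation --- or, more simply, work with all primes rather than just maximal ones, since ``unit iff in no prime'' suffices. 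Neither point is a genuine gap.
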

\begin{proof}
	The only if part is clear. For the if part we treat separately the cases of $F$ being a not formally real field ($-1$ is a sum of squares) and formally real.
	
	If $F$ is not formally real then it follows from \cite[Lemma~V.7.7, Theorem~V.7.7 and Theorem~V.8.9]{baeza1978quadratic} that the fundamental ideal 
	\[
	I(F)=\ker (\GW(F) \xrightarrow{\rk} \Z)
	\]
	is the nilradical of $\GW(F)$. Then $\alpha = \pm e^m + \nu$ for some $\nu\in I(F)\otimes \Z[\frac{1}{e}]$ being nilpotent, thus $\alpha$ is a unit.
	
	Now suppose that $F$ is formally real. Then we have $e=1$ and $\rk \alpha =\pm 1$ and $\sgn \alpha =\pm 1$ for all the possible signatures. Changing $\alpha$ to $\alpha^2$ we may assume that $\rk \alpha =1$ and $\sgn \alpha =1$ for all the possible signatures. It follows from \cite[Chapter~2, Theorem~7.10 and Corollary~2.2]{Scharlau85} that $\nu = \alpha - 1 $ is nilpotent. Then $\alpha=1+\nu$ is a unit.
\end{proof}

\begin{lem} \label{lem:Brown_trick_adhoc}
	Let $F$ be a field of exponential characteristic $e$, $X=(H\backslash \GL_r)_F$ with $H=\mathrm{N}_{\GL_r} T$ being the normalizer of the standard maximal torus and let $f\colon X\to \Spec F$ be the projection. Let $\hat{\phi}\colon f_\sharp f^* \1_F \to \1_F$ be the morphism adjoint to an isomorphism $\phi\colon f^* \1_F \xrightarrow{\simeq} f^* \1_F$ in $\SH(X)[\frac{1}{e}]$. Then the composition
	\[
	\1_F \xrightarrow{\bglt_f} f_\sharp f^* \1_F \xrightarrow{\hat{\phi}} \1_F
	\]
	is an isomorphism in $\SH(F)[\frac{1}{e}]$.
\end{lem}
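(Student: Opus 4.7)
The composition $\hat{\phi}\circ\bglt_f$ lies in $\End_{\SH(F)[\tfrac{1}{e}]}(\1_F)\cong\GW(F)[\tfrac{1}{e}]$ by Morel's theorem, so it corresponds to an element $\alpha$. By Lemma~\ref{lem:quadratic_invertible}, $\alpha$ is a unit if and only if $\rk\alpha$ and every signature $\sgn\alpha$ are units in $\Z[\tfrac{1}{e}]$. The plan is to verify these numerical invariants separately by applying symmetric monoidal realization functors and reducing to classical Euler characteristic computations on $X=(N_{\GL_r}T)\backslash\GL_r$ twisted by the realization of $\phi$.

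For the rank, I would apply the $\ell$-adic realization $\Real_{\Q_\ell}$ for some prime $\ell\neq\chr F$. It carries $\bglt_f$ to the classical $\ell$-adic Becker--Gottlieb transfer for $X_{\bar F}$ and sends $\phi$ to an automorphism of the constant sheaf $\underline{\Q_\ell}$ on $X_{\bar F}$; since $X_{\bar F}$ is geometrically connected (as $\GL_r$ is), this automorphism is multiplication by the integer $\rk(\phi)$, which is a unit in $\Z[\tfrac{1}{e}]$ because $\phi$ itself is a unit in $\pi_0\1_X(X)[\tfrac{1}{e}]$. Hence $\rk\alpha=\rk(\phi)\cdot\chi(X_{\bar F})$. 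The remaining fact $\chi(X_{\bar F})=1$ then follows by combining two observations: the quotient $\GL_r/T\to X$ is a finite \'etale Galois cover with group $W=S_r$ of order $r!$, so $\chi(\GL_r/T)=r!\cdot\chi(X_{\bar F})$; meanwhile $\GL_r/T$ is an $\A^{r(r-1)/2}$-bundle over the complete flag variety $\GL_r/B$, whose Schubert cell decomposition gives $\chi(\GL_r/B)=|W|=r!$.

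For the signatures, $F$ being formally real forces $e=1$, and one proceeds one ordering at a time via the real \'etale realization $\Real_{\ret}$ with real closure $E$. It realizes $\bglt_f$ as the classical Becker--Gottlieb transfer of the real analytic manifold $X(E)$ and $\phi$ as a locally constant function $X(E)\to\{\pm 1\}$, producing a signed sum $\sum_c\epsilon_c\chi(X(E)_c)$ over connected components of $X(E)$. A clean baseline is that the cellular/Galois-cover argument runs $\GW$-valued and yields $\chi^{\GW}(X)=1\in\GW(F)$, so $\chi(X(E))=1$ \emph{unweighted}. The main obstacle is that $X$ is not stably $\A^1$-connected (\remref{rem:nonconnected}), so the signs $\epsilon_c$ can genuinely vary across components; one must show $\sum_c\epsilon_c\chi(X(E)_c)=\pm 1$ for any sign-pattern arising from a global iso $\phi$. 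My plan here would be to stratify $X(E)$ using the classification of maximal $E$-tori of $\GL_r$: such tori correspond to decompositions $E^r=\bigoplus V_i$ into anisotropic subspaces of dimension $1$ or $2$, indexed by conjugacy classes of involutions in $W=S_r$ (i.e.\ by partitions $r=a+2b$). After computing the Euler characteristic of each component $X(E)_c$ and tracking which $\epsilon_c$-patterns are compatible with $\phi$ being a unit in $\GW(X)[\tfrac{1}{e}]$, the signed sum should collapse to $\pm 1$; this component-wise bookkeeping is the delicate heart of the ad hoc motivic Brown's trick and is precisely where the intricacy flagged in the introduction enters.
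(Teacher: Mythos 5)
Your reduction via Lemma~\ref{lem:quadratic_invertible}, together with the observation that $\hat\phi$ is invertible under the multiplication on $\Hom_{\SH(F)[\frac{1}{e}]}(f_\sharp f^*\1_F,\1_F)$ coming from the diagonal $\Delta$ (so that monoidal realizations send it to units), is exactly the paper's strategy; both halves of your argument then proceed by realization, as in the paper. The rank half is essentially correct and slightly more self-contained: you compute $\chi(X_{\overline{F}})=1$ directly from the finite \'etale $W$-cover $T\backslash\GL_r\to X$ of degree $r!$ together with the Bruhat cell decomposition, where the paper instead cites \cite[Lemma~3.1]{Ananyevskiy22} for the stronger fact $\Real_{\Q_\ell}(X)\cong\Q_\ell$. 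One point of care: you need $\Real_{\Q_\ell}(\phi)$ to be a unit in $\Z[\frac{1}{e}]$, not merely in $\Q_\ell^\times$; the paper secures this by factoring $\Real_{\Q_\ell}$ through $\Real_{\DM}$ and observing that $\Real_{\DM}(\hat\phi)\in CH^0(X)[\frac{1}{e}]\cong\Z[\frac{1}{e}]$ is a unit there.

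The signature half, however, has a genuine gap. You correctly identify that the signs $\epsilon_c$ vary across the components of $X(E)$ and that one must show $\sum_c\epsilon_c\,\chi(X(E)_c)=\pm 1$. But you never supply the crucial geometric input: the paper uses \cite[Lemma~4.9]{Ananyevskiy22} to conclude that $\Real_{B\RR}(f_\sharp f^*\1_\RR)\cong\bigoplus_i\Sigma^\infty(M_i)_+$ for connected compact manifolds $M_i$ satisfying $\chi^{\mathrm{top}}(M_1)=1$ and $\chi^{\mathrm{top}}(M_i)=0$ for $i\neq 1$. Once this is known, no constraint on the sign pattern is needed: $\sum_i\epsilon_i\chi^{\mathrm{top}}(M_i)=\epsilon_1=\pm 1$ for \emph{every} choice of $\epsilon_i\in\{\pm 1\}$. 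Your proposal to ``track which $\epsilon_c$-patterns are compatible with $\phi$ being a unit'' is therefore a red herring, and your baseline $\chi^{\GW}(X)=1$ (indeed the main theorem of \cite{Ananyevskiy22}) does not by itself determine how the topological Euler characteristic distributes across the components of $X(E)$. The component-wise Euler characteristic computation is precisely the delicate part you flag but do not carry out, and it is where the real content of the lemma lies.
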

\begin{proof}
	The composition $\Theta = \hat{\phi}\circ \bglt_f$ corresponds to an element of $\alpha_\Theta \in \GW(F)[\frac{1}{e}]\cong \End_{\SH(F)[\frac{1}{e}]} (\1_F)$, whence Lemma~\ref{lem:quadratic_invertible} yields that it suffices to show that $\rk \alpha_\Theta$ and all the possible signatures $\sgn \alpha_\Theta$ are invertible. We will treat below the cases of the rank and of the signatures separately, but first we make the following observation. Recall that 
	\[
	\1^0(X)[\tfrac{1}{e}]=	\Hom_{\SH(X)[\frac{1}{e}]}(f^* \1_F, f^*\1_F) \cong \Hom_{\SH(F)[\frac{1}{e}]}(f_\sharp f^* \1_F, \1_F)
	\]
	and composition on the left side corresponds to the multiplication on the right induced by the diagonal morphism $\Delta\colon f_\sharp f^* \1_F \cong f_\sharp (f^* \1_F \wedge f^* \1_F) \to f_\sharp f^* \1_F \wedge f_\sharp f^* \1_F$ arising from the op-lax monoidality of $f_\sharp$. Then $\hat{\phi}$ is invertible under this product and it follows that realization functors, being symmetric monoidal, realize $\hat{\phi}$ to invertible elements of the respective rings.
	
	\textbf{[Rank]} Without loss of generality we may assume that $F$ is algebraically closed. Let $\ell\neq e$ be a prime, then $\Real_{\Q_\ell}(X)\cong \Q_\ell$ by \cite[Lemma~3.1]{Ananyevskiy22}, whence $\Real_{\Q_\ell}(\bglt_f)=\id_{\Q_\ell}$. Since the $\ell$-adic realization functor factors as
	\[
	\SH(F)[\tfrac{1}{e}] \xrightarrow{\Real_\DM} \DM(F)[\tfrac{1}{e}] \xrightarrow{\Real^\DM_{\Q_\ell}} D(\Q_\ell),
	\]
	then $\Real_{\Q_\ell}(\hat{\phi}) = \Real^\DM_{\Q_\ell} (\Real_\DM (\hat{\phi}))$ and
	\[
	\Real_\DM (\hat{\phi}) \in \Hom_{\DM(F)[\frac{1}{e}]}(\Real_\DM (f_\sharp f^* \1_F ), \Real_\DM(\1_F)) \cong CH^0(X)[\tfrac{1}{e}]=\Z[\tfrac{1}{e}].
	\]
	By the discussion above, $\Real_\DM (\hat{\phi})$ is invertible in $\Z[\tfrac{1}{e}]$. Thus 
	\[
	\rk \alpha_\Theta = \Real_{\Q_\ell}(\Theta) = \Real_{\Q_\ell}(\bglt_f)\circ \Real_{\Q_\ell}(\hat{\phi}) = \Real_{\Q_\ell}(\hat{\phi})
	\]
	is an invertible element of $\Z[\tfrac{1}{e}]$.
	
	\textbf{[Signature]} Without loss of generality we may assume that $F$ is a real closed field, in particular, $e=1$. Let $\RR^{\mathrm{alg}}=\RR\cap \overline{\Q}$ be the real closure of $\Q$ in the field of real numbers $\RR$. There is a unique isomorphism between $\RR^{\mathrm{alg}}$ and the real closure of $\Q$ in $F$ \cite[Theorem~III.2.1]{Scharlau85}, in particular, there is a canonical embedding $\RR^{\mathrm{alg}}\subseteq F$. Then we have the following commutative diagram consisting of base change functors and realizations.
	\[
	\begin{tikzcd}
		& \SH(\RR^{\mathrm{alg}}) \ar[dd,"\Real_{\ret}"] \ar[ld,swap,"g^*_{F/\RR^{\mathrm{alg}}}"] \ar[dr,"g^*_{\RR/\RR^{\mathrm{alg}}}"] &   & \\
		\SH(F) \ar[rd,"\Real_{\ret}"] & & \SH(\RR) \ar[dl,"\Real_{\ret}\cong \Real_{B\RR}"] \\
		&  \SH & 
	\end{tikzcd}
	\]
	We have $X=(H\backslash \GL_r)_{\RR^{\mathrm{alg}}}\times_{\Spec \RR^{\mathrm{alg}}} \Spec F$, whence 
	\[
	\Real_{\ret}(f_\sharp f^* \1_F) = \Real_{\ret}(f_\sharp f^* \1_{\RR^{\mathrm{alg}}}) \cong \Real_{B\RR}(f_\sharp f^* \1_\RR),
	\]
	where we denote by the same $f$ the respective projections over different base fields. Furthermore, it follows from the proof of \cite[Lemma~4.9]{Ananyevskiy22} that there is a canonical homotopy equivalence
	\[
	\Real_{B\RR}(f_\sharp f^* \1_\RR) \cong  \bigoplus_{i=1}^n \Sigma^\infty (M_i)_+
	\]
	for some connected compact real manifolds $M_i$ such that $\chi^{\mathrm{top}}(M_1)=1$ and $\chi^{\mathrm{top}}(M_i)=0$, $i\neq 1$ ($M_i$ is $\mathcal{K}_G/\mathcal{K}_N$ corresponding to $G(\RR)/N_i(\RR)$ in the notation of the proofs of \cite[Lemma~4.8 and Lemma~4.9]{Ananyevskiy22}). Then
	\[
	\sgn(\alpha_{\Theta})=\Real_{\ret}(\Theta)= \Real_{\ret} (\hat{\phi}) \circ  \Real_{\ret} (\bglt_{f}) = \sum_{i=1}^n \hat{\phi}_i \circ \bglt_i,
	\]
	where $\bglt_i\in  \Hom_{\SH}(\mathbb{S},\Sigma^\infty (M_i)_+)$ is the Becker--Gottlieb transfer for the projection $M_i\to \mathrm{pt}$ and $\hat{\phi}_i \in \Hom_{\SH}(\Sigma^\infty (M_i)_+,\mathbb{S})$ is the respective component of $\Real_{\ret}(\hat{\phi})$ and $\mathbb{S}$ is the topological sphere spectrum. Let $H\colon \mathbb{S} \to H\Z$ be the unit map for the topological Eilenberg-MacLane spectrum, then the map
	\[
	\Hom_{\SH}(\mathbb{S},\mathbb{S})\to \Hom_{\SH}(\mathbb{S},H\Z),\quad \alpha \mapsto H\circ \alpha
	\]
	is an isomorphism. We have $\Hom_{\SH}(\Sigma^\infty (M_i)_+,H\Z)\cong \Z$ with the generator given by the projection $\mu_i \colon \Sigma^\infty (M_i)_+ \to \mathbb{S}$ composed with $H\colon \mathbb{S} \to H\Z$ whence
	\[
	\sum_{i=1}^n H\circ \hat{\phi}_i \circ \bglt_i = \sum_{i=1}^n H\circ (c_i\cdot \mu_i) \circ \bglt_i
	\]
	for some $c_i\in \Z$. Recall that by the discussion in the beginning of the proof $\Real_{\ret}(\hat{\phi})$ is invertible thus $H\circ \Real_{\ret}(\hat{\phi})$ is an invertible element of the ring $\Hom_{\SH}(\bigoplus_{i=1}^n\Sigma^\infty (M_i)_+,H\Z)\cong \Z^{\times n}$ yielding that $c_i=\pm 1$. Thus
	\[
	\sgn(\alpha_{\Theta}) = \sum_{i=1}^n \hat{\phi}_i \circ \bglt_i = \sum_{i=1}^n c_i \cdot \mu_i \circ \bglt_i =\pm 1,
	\]
	where for the last equality we used
	\[
	\mu_i \circ \bglt_i = \chi^{\mathrm{top}}(M_i) = \begin{cases}
		1, & i =1,\\
		0, & i\neq 1. \qedhere
	\end{cases}
	\]
\end{proof}

\begin{rem} \label{rem:nonconnected}
	In the notation of Lemma~\ref{lem:Brown_trick_adhoc}, let $x\in X$ be a rational point with the embedding $i_x\colon \Spec F \to X$ and 
	\[
	s_x\colon \1_F\cong i_x^*f^* \1_F \xrightarrow{i_x^*\lambda_{f^*\1_F}} i_x^*f^*f_\sharp f^* \1_F \cong f_\sharp f^* \1_F
	\]
	be the corresponding morphism in $\SH(F)[\frac{1}{e}]$, where $\lambda$ is the unit of the adjunction. It is easy to see that $\hat{\phi} \circ s_x = i_x^*\phi$ is an isomorphism and one may be tempted to expect $s_x = \bglt_f$ in $\Hom_{\SH(F)[\frac{1}{e}]}(\1_F,f_\sharp f^* \1_F)= \pi_{0+(0)}(f_\sharp f^* \1_F)$ which immediately yields (if true) the claim of Lemma~\ref{lem:Brown_trick_adhoc}. Unfortunately, in general $s_x \neq \bglt_f$ since, in particular, different points $x\in X$ may give rise to different $s_x$, e.g. if $F=\RR$ and the points realize to different connected components of $X(\RR)$. This happens because $X$ is not stably $\A^1$-connected, although it is a rational variety \cite[Theorem~7.9]{BorelSpringer68} admitting an \'etale cover consisting of affine spaces (given by translates of $U_-\times U$ with $U,U_-\le \GL_r$ being the upper and lower unitriangular matrices).
\end{rem}

\begin{rem}
	Let $F$ be a field of exponential characteristic $e$, $X\in \Sm_F$ with the structure morphism $f\colon X \to \Spec F$ and $\hat{\phi}\colon f_\sharp f^* \1_F \to \sU$ be the morphism adjoint to an isomorphism $\phi \colon f^* \1_F \xrightarrow{\simeq} f^* \sU$ in $\SH(X)[\frac{1}{e}]$ for some $\sU\in \SH(F)[\frac{1}{e}]$. Suppose that $\chi^{\SH(F)}(f_\sharp f^* \1_F)=1$. Then the composition
	\[
	\1_F \xrightarrow{\bglt_f} f_\sharp f^* \1_F \xrightarrow{\hat{\phi}} \sU
	\]
	is not necessarily an isomorphism. As an example one may take $F=\CC$ the field of complex numbers, $X=X_1\sqcup X_2\sqcup X_3 $ with $X_1=X_2=\Spec \CC$ and $X_3=(\A^1_{\CC} - \{0,1\})$ and the structure morphisms $f_i\colon X_i\to \Spec \CC$, $\sU=\1_X$ and $\hat{\phi} = (\mu_1, \mu_2, -\mu_3)$ for $\mu_i\colon (f_i)_\sharp f_i^* \1_\CC \to \1_\CC$ being the respective projections (counits of the adjunctions). Here we have 
	\[
	\chi^{\SH(F)}((f_1)_\sharp f_1^* \1_\CC)=\chi^{\SH(F)}((f_2)_\sharp f_2^* \1_\CC)=1,\quad \chi^{\SH(F)}((f_3)_\sharp f_3^* \1_\CC)=-1,
	\]
	whence $\chi^{\SH(F)}(f_\sharp f^* \1_\CC)=1$, and 
	\begin{multline*}
	\hat{\phi}\circ \bglt_f= \mu_1\circ \bglt_{f_1}+ \mu_2\circ \bglt_{f_2} -\mu_3\circ \bglt_{f_3}=\\
	= \chi^{\SH(F)}((f_1)_\sharp f_1^* \1_\CC) + \chi^{\SH(F)}((f_2)_\sharp f_2^* \1_\CC) - \chi^{\SH(F)}((f_3)_\sharp f_3^* \1_\CC) = 3
	\end{multline*}
	which is not invertible in $\End_{\SH(\CC)}(\1_\CC)\cong \Z$.
\end{rem}
	
	\section{Adams' conjecture}
	\label{sec:adams-conj-away}

        With all the ingredients in place, the proof of Theorem~\ref{thm:Adams} follows an established pattern, starting with the case of line bundles.
        
	\begin{lem} \label{lem:Adams_away_line}
		Let $\sL$ be a line bundle over a regular scheme $S$ over a field $F$ and $k\in \Z$. Then for some $N\in \N_0$ one has $ \Th(k^N\otimes \sL) \cong  \Th(k^N\otimes \sL^{\otimes k})$ in $\SH(S)$.
	\end{lem}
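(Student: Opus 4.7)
The plan is to exhibit, for each parity of $k$, an explicit morphism between Thom spaces of vector bundles of the same rank whose unoriented $\A^1$-degree at the generic point $\xi\in S$ is an integer power of $k$, and then invoke the motivic mod $k$ Dold theorem (Theorem~\ref{thm:modkDold}). The cases $k=0,1$ are immediate; below I concentrate on $k\ge 2$ (the case of negative $k$ is treated by a parallel argument).

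For $k$ odd, I would take the fibrewise $k$-th power map $\sL\to \sL^{\otimes k}$, $v\mapsto v^{\otimes k}$. This is well-defined globally because the transition cocycle of $\sL^{\otimes k}$ is the $k$-th power of that of $\sL$; it sends the zero section to itself and its complement to its complement, hence induces a morphism $\phi\colon \Th(\sL)\to \Th(\sL^{\otimes k})$ in $\SH(S)$. After trivializing $\sL$ and $\sL^{\otimes k}$ at $\xi$, the restriction $\phi|_\xi$ becomes the self-map $x\mapsto x^k$ of $\A^1/(\A^1-\{0\})$ studied in Lemma~\ref{lem:local_degree}(1). That lemma supplies $q\in \GW(F)$ with $\adeg(\phi|_\xi)\cdot q=k\in \Z\subseteq \GW(F(\xi))$. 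Composing $\phi$ with the endomorphism of $\Th(\sL^{\otimes k})$ given by the action of $q\in \GW(F)\cong \End_{\SH(F)}(\1_F)$ pulled back along $S\to \Spec F$ produces $\phi'\in \Hom_{\SH(S)}(\Th(\sL),\Th(\sL^{\otimes k}))$ with $\aged_\xi(\phi')=k$, and Theorem~\ref{thm:modkDold} then delivers $N\in \N_0$ with $\Th(k^N\otimes \sL)\cong \Th(k^N\otimes \sL^{\otimes k})$.

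For $k$ even, Lemma~\ref{lem:local_degree}(2) supplies $M\in \N_0$, degree-$k$ homogeneous polynomials $f_1,\dots,f_k\in \Z[x_1,\dots,x_k]$ with $Z(f_1,\dots,f_k)=\{0\}$, and $q\in \GW(F)$ such that the map $\phi_0\colon (x_1,\dots,x_k)\mapsto(f_1,\dots,f_k)$ on $\A^k/(\A^k-\{0\})$ satisfies $\adeg(\phi_0)\cdot q=k^M$. Homogeneity of weight $k$ together with integrality of the $f_i$ means the same formula defines a morphism of schemes $\sL^{\oplus k}\to (\sL^{\otimes k})^{\oplus k}$ over $S$ (transitions transform by $g^k$ on both sides), and the vanishing condition lets it induce $\phi\colon \Th(k\otimes \sL)\to \Th(k\otimes \sL^{\otimes k})$. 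Correcting by $q$ as in the odd case produces $\phi'$ with $\aged_\xi(\phi')=k^M$, and Theorem~\ref{thm:modkDold} applied with $k^M$ in place of $k$, $\sE_1=\sL^{\oplus k}$, $\sE_2=(\sL^{\otimes k})^{\oplus k}$ yields $N'\in \N_0$ with $\Th((k^M)^{N'}\otimes \sL^{\oplus k})\cong \Th((k^M)^{N'}\otimes (\sL^{\otimes k})^{\oplus k})$, i.e.\ $\Th(k^{MN'+1}\otimes \sL)\cong \Th(k^{MN'+1}\otimes \sL^{\otimes k})$, so $N=MN'+1$ works.

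The main technical point is the passage from an $\A^1$-degree valued in the Grothendieck--Witt ring (as naturally arises from the geometric map on Thom spaces) to an honest integer degree, which is what Theorem~\ref{thm:modkDold} requires. This is precisely why Lemma~\ref{lem:local_degree} has been set up with its ``corrector'' $q\in \GW(F)$, and also why in the even case one must accept paying the price of passing from $k$ to $k^M$. The remaining bookkeeping --- checking that the proposed polynomial formulas globalize to scheme morphisms and descend to Thom spaces --- is straightforward from homogeneity and integrality.
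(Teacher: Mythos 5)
Your proposal matches the paper's proof essentially verbatim: reduce to the generic point, build the $k$-th power map on $\sL$ (or its $k$-fold direct sum for even $k$), use Lemma~\ref{lem:local_degree} to correct the $\A^1$-degree to an honest integer power of $k$, and feed the result into Theorem~\ref{thm:modkDold}. The only loose thread is your parenthetical ``the case of negative $k$ is treated by a parallel argument'' --- the power map $v\mapsto v^{\otimes k}$ has no obvious analogue for $k<0$, so the paper instead reduces to $k>0$ at the outset via the isomorphism $\Th(\sL^{\otimes k})\cong\Th(\sL^{\otimes -k})$ (and also notes one may assume $S$ connected so that the generic point is well-defined).
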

	\begin{proof}
		Recall that $\Th(\sL^{\otimes k})\cong \Th(\sL^{\otimes -k})$ by \cite[Lemma~4.1]{Ananyevskiy20} or \cite[Prop.~2.2]{rondigs.theta}, whence we may assume $k>0$. Moreover, without loss of generality we may assume $S$ to be connected. Let $i\colon \xi\to S$ denote the inclusion of its generic point. A chosen trivialization $\theta\colon \sL|_\xi \xrightarrow{\simeq} \sO_\xi$  induces the trivialization $\theta^{\otimes k}\colon  \sL^{\otimes k}|_\xi \xrightarrow{\simeq} \sO_\xi$. We proceed separately for even and odd $k$.
		
		\textbf{[$\mathbf{k}$ is odd]} Let $\phi\colon \sL \to \sL^{\otimes k}$ be the regular morphism of schemes over $S$ given on sections by
		\[
		v\mapsto \underbrace{v\otimes v \otimes \hdots \otimes v}_k.
		\]
		Then under the trivializations $\theta,\theta^{\otimes k}$ the morphism $i^*(\Sigma_\T^\infty\Th(\phi))$ is given by the $\Sigma_\T^\infty$-suspension of
		\[
		\Th(\sO_\xi) \to \Th(\sO_\xi),\quad t\mapsto t^k,
		\]
		where $t$ is the coordinate function on $\Th(\sO_\xi)=\A^1_\xi/(\A^1_\xi - \{0\})$. Then Lemma~\ref{lem:local_degree} yields that there exists $q\in \GW(F)$ such that $q\cdot\Sigma_\T^\infty\Th(\phi)$ satisfies the assumptions of Theorem~\ref{thm:modkDold} whence the claim.
		
		\textbf{[$\mathbf{k}$ is even]} Let $q\in \GW(F)$ and $\phi\colon k\otimes \sL \to k\otimes\sL^{\otimes k}$ be the regular morphism of schemes over $S$ given on sections by $\phi$ from the second part of Lemma~\ref{lem:local_degree}. Then we may apply Theorem~\ref{thm:modkDold} to $\sE_1= k\otimes \sL$, $\sE_2= k\otimes \sL^{\otimes k}$ and the morphism $q\cdot \Sigma_\T^\infty (\phi)$ whence there exists $N'\in \N$ such that $ \Th((k^M)^{N'}\otimes (k \otimes \sE_1)) \cong  \Th((k^M)^{N'}\otimes (k \otimes \sE_2))$. The claim follows with $N=M\cdot N'+1$.
	\end{proof}	

	\begin{defn} \label{def:Adams}
		Let $k\in \N$ and $s_k\in \Z[x_1,x_2,\hdots,x_k]$ be the $k$-th Newton polynomial, that is the polynomial satisfying $t_1^k+t_2^k+\hdots + t_k^k=s_k(\sigma_1,\sigma_2,\hdots,\sigma_k)$ for the elementary symmetric polynomials $\sigma_i$. The value $\psi^k(\sE)$ of the \textit{$k$-th Adams operation} on a vector bundle $\sE$ over $S$ is the virtual vector bundle given by 
		\[
		\psi^k (\sE) = s_k(\sE, \Lambda^2\sE, \hdots, \Lambda^k \sE),
		\]
		where $\Lambda^i \sE$ is the $i$-th exterior power of $\sE$.
                Moreover, let $\psi^{-k}(\sE):=\psi^k(\sE^{\vee})$ for the dual vector bundle $\sE^\vee$ and $\psi^0(\sE)=1$, defining $\psi^k$ for all $k\in \Z$. Note that if $\sE=\sL$ is a line bundle, then $\psi^k (\sL)=\sL^{\otimes k}$ for all $k\in \Z$.	For a virtual vector bundle $\Vv=\sE_1\ominus \sE_2$ over $S$ we set 
		\[
		\psi^k (\Vv) = \psi^k (\sE_1) \ominus \psi^k (\sE_2).
		\]
		Recall \cite[Section 16.2]{norms} that $\Th$ defined on virtual vector bundles descends to
		\[
		\Th \colon K_0(S) \to \Pic (\SH(S))
		\]
		whence the usual formulas for Adams operations are applicable to $\Th\circ \psi^k$, in particular,
		\begin{itemize}
			\item 
		$\Th(\psi^k(\Vv \oplus \Vv'))\cong \Th(\psi^k(\Vv)\oplus \psi^k(\Vv'))$ 
		for $k\in \Z$ and virtual vector bundles $\Vv,\Vv'$ over $S$,
		\item
		$\Th(\psi^k(\psi^m(\Vv))) \cong \Th(\psi^{km}(\Vv))$
		for $k,m\in \Z$ and a virtual vector bundle $\Vv'$ over $S$,
		\item
		$\Th(\psi^p \sE) \cong \Th(\sE^{[p]})$ for a vector bundle $\sE$ over a scheme $S$ over a field $F$ of characteristic $p$ and $\sE^{[p]}=\mathrm{Frob}^*\sE$ with $\mathrm{Frob}\colon S\to S$ being the Frobenius morphism.
		\end{itemize}		
	\end{defn}

	\begin{defn}
		Let $\sE$ be a rank $r$ vector bundle over a scheme $S$. We say that $\sE$ \textit{admits a reduction of the structure group to $H\le \GL_r$} if there exists a (left) $H$-torsor $X\to S$ such that there exists an isomorphism of vector bundles $\sE \cong H \backslash (X\times \A^r)$ for the standard representation $H\times  \A^r \le \GL_r \times  \A^r \to \A^r$.
	\end{defn}
	
	\begin{lem} \label{lem:Adams_away_reduced} 
		Let $\sE$ be a rank $r$ vector bundle over a regular scheme $S$ over a field $F$ and $k\in \Z$. Suppose that $\sE$ admits a reduction of the structure group to $H\le \GL_r$, where $H:=N_{\GL_r} T\le \GL_r$ is the normalizer of the standard maximal torus. Then for some $N\in \N_0$ one has $ \Th(k^N\otimes \sE) \cong  \Th(k^N\otimes \psi^k \sE)$ in $\SH(S)$.
	\end{lem}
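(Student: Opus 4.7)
The plan is to reduce to the line bundle case Lemma~\ref{lem:Adams_away_line} by realizing $\sE$ as a direct image of a line bundle along a finite étale cover of $S$, and then transferring the resulting equivalence back to $S$ via the Bachmann--Hoyois norm.

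First I would construct the cover. Let $X\to S$ be the $H$-torsor witnessing the structure reduction, and let $H'\le H$ be the stabilizer of the line $[e_1]\subset\A^r$ under the standard representation $H\hookrightarrow\GL_r$; this is a subgroup of index $r$, and the character $\chi\colon H'\to\Gm$ given by the scalar action on $[e_1]$ is well defined. Setting $\pi\colon Y:=X/H'\to X/H=S$ produces a finite étale cover of degree $r$, so $Y$ is regular over $F$. The associated line bundle $\sL:=X\times^{H'}\A^1_\chi$ on $Y$ satisfies $\pi_*\sL\cong\sE$ as vector bundles on $S$, by the classical identity $\mathrm{Ind}_{H'}^H\chi\cong\A^r$ of $H$-representations.

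Next I would apply Lemma~\ref{lem:Adams_away_line} to $\sL$ over $Y$, obtaining $N\in\N_0$ and an equivalence
\[
\Th_Y(k^N\otimes\sL)\cong\Th_Y(k^N\otimes\sL^{\otimes k})
\]
in $\SH(Y)$, which I may rewrite in terms of smash powers as $\Th_Y(\sL)^{\wedge k^N}\cong\Th_Y(\sL^{\otimes k})^{\wedge k^N}$. To transfer this equivalence to $\SH(S)$, I would invoke the Bachmann--Hoyois norm functor $\pi_\otimes\colon\SH(Y)\to\SH(S)$ associated to the finite étale map $\pi$: this is a symmetric monoidal functor satisfying the key identification $\pi_\otimes\Th_Y(\sM)\cong\Th_S(\pi_*\sM)$ for every vector bundle $\sM$ on $Y$, and consequently converts the equivalence above into
\[
\Th_S(k^N\otimes\sE)\cong\Th_S(k^N\otimes\pi_*(\sL^{\otimes k}))
\]
in $\SH(S)$.

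Finally I would identify $\pi_*(\sL^{\otimes k})$ with $\psi^k\sE$ in $K_0(S)$, which is sufficient by Remark~\ref{rem:J}. After pulling back along an étale cover of $S$ that trivializes the $H$-torsor $X$, the bundle $\sE$ splits canonically as $\bigoplus_{i=1}^r\sL_i$, while $\pi_*(\sL^{\otimes k})$ becomes $\bigoplus_{i=1}^r\sL_i^{\otimes k}$, which coincides with $\psi^k\sE$ by Definition~\ref{def:Adams}. Étale descent gives the identification globally, yielding the required equivalence $\Th_S(k^N\otimes\sE)\cong\Th_S(k^N\otimes\psi^k\sE)$.

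The main obstacle I anticipate is the identification $\pi_\otimes\Th_Y(\sM)\cong\Th_S(\pi_*\sM)$ for finite étale $\pi$ and vector bundles $\sM$: this is the precise mechanism converting a rank one Thom spectrum on $Y$ into a rank $r$ Thom spectrum on $S$, and depends on the compatibility of Weil restriction of schemes with the Thom-spectrum construction together with the construction of the norm $\pi_\otimes$ in the motivic stable homotopy category.
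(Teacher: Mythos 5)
Your proposal is correct and follows essentially the same route as the paper. Both arguments pass from the $N_{\GL_r}T$-torsor to the associated degree-$r$ finite étale cover $\pi\colon Y\to S$ with its tautological line bundle $\sL$, identify $\pi_*\sL\cong\sE$ and $\pi_*(\sL^{\otimes k})$ with $\psi^k\sE$ in $K_0(S)$ via elementary representation theory of $N_{\GL_r}T$, apply the line-bundle case to $\sL$ over $Y$, and transfer the resulting equivalence along $\pi$ using the Bachmann--Hoyois norm together with the compatibility $\pi_\otimes\Th(\sM)\cong\Th(\pi_*\sM)$ of \cite[Proposition~3.13]{norms}; the only cosmetic differences are that the paper writes this multiplicative transfer as $f_*$ and phrases the representation-theoretic identification as $\operatorname{Ind}_{\widetilde H}^H(L^{\otimes k})\oplus W\ominus W=\psi^k(V)$ rather than by pulling back to a trivializing cover.
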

	\begin{proof}
		Let $X\to S$ be an $H$-torsor such that there exists an isomorphism $\sE \cong H \backslash (X\times \A^r)$.	Let $\widetilde{H}\le H$ be the subgroup stabilizing the decomposition $\A^1\times \A^{r-1}=\A^r$, $f\colon Y=\widetilde{H} \backslash X \to S$ be the associated degree $r$ finite \'etale morphism and $\sL=\widetilde{H} \backslash (X\times \A^1)$ be the  line bundle over $Y$ associated to the standard linear representation of $\widetilde{H}$. Recall that descent yields equivalences of categories of (equivariant) vector bundles:
		\[
		\mathcal{A}_{X/S}\colon \Vect_H(X) \xrightarrow{\simeq} \Vect(S), \quad \mathcal{A}_{X/Y}\colon \Vect_{\widetilde{H}}(X) \xrightarrow{\simeq} \Vect(Y).
		\]
		Consider the following commutative diagram.
		\[
		\begin{tikzcd}
		  \Vect(Y) \ar[d,"{R^0f_*}"]  &
                  \Vect_{\widetilde{H}}(X) \ar[l,"\simeq"',"\mathcal{A}_{X/Y}"]
                  \ar[d,"\operatorname{Ind}_{\widetilde{H}}^H"] &
                  \mathcal{R}ep(\widetilde{H})
                  \ar[d,"\operatorname{Ind}_{\widetilde{H}}^H"] \ar[l,"p_{\widetilde{H}}^*"']\\
		  \Vect(S)   &
                  \Vect_{H}(X) \ar[l,"\simeq"',"{\mathcal{A}_{X/S}}"] &
                  \mathcal{R}ep(H) \ar[l,"p_H^*"]
                \end{tikzcd}
		\]
		Here $R^0f_*$ is the direct image functor, $\mathcal{R}ep(\widetilde{H})$ and $\mathcal{R}ep(H)$ are the categories of representations over $F$,  $\operatorname{Ind}_{\widetilde{H}}^H$ is given by induction for the inclusion $\widetilde{H}\le H$, $p_H^*$ and $p_{\widetilde{H}}^*$ are induced by the projection $p\colon X\to \Spec F$. For the standard linear representation $L$ of $\widetilde{H}$ we have
		\[
		(R^0f_*)(\sL)=(R^0f_*)\circ \mathcal{A}_{X/Y} \circ p_{\widetilde{H}}^* (L) = \mathcal{A}_{X/S} \circ p_{\widetilde{H}}^* \circ \operatorname{Ind}_{\widetilde{H}}^H (L) = \sE,
		\]
		since $\operatorname{Ind}_{\widetilde{H}}^H (L)=V$ is the standard rank $r$ representation of $H$. Similarly, 
		\[
		(R^0f_*)(\sL^{\otimes k}) = \mathcal{A}_{X/S} \circ p_{\widetilde{H}}^* \circ \operatorname{Ind}_{\widetilde{H}}^H (L^{\otimes k}).
		\]
		Using the restriction functor $\mathcal{R}ep(H)\to \mathcal{R}ep(T)$ where $T\le H$ is the maximal torus it is straightforward to check that
		\[
		 \psi^k(V) = \operatorname{Ind}_{\widetilde{H}}^H (L^{\otimes k})\oplus W\ominus W
		\]
		for some representation $W$ of $H$. Here we define $\psi^k$ on $\mathcal{R}ep(H)$ using the same formulas as in Definition~\ref{def:Adams}. Summarizing the above, we obtain that
		\[
		(R^0f_*)(\sL) = \sE,\quad (R^0f_*)(\sL^{\otimes k})\oplus \sW\ominus \sW = \psi^k(\sE)
		\]
		for some vector bundle $\sW$ over $S$.
		
		Lemma~\ref{lem:Adams_away_line} yields that for some $N\in\N$ there exists an isomorphism $\theta\colon  \Th(k^N\otimes \sL)  \xrightarrow{\simeq}  \Th(k^N\otimes (\sL^{\otimes k}))$
		in $\SH(Y)$ yielding an isomorphism
		\[
		f_*(\theta)\colon f_* \Th(k^N\otimes \sL) \xrightarrow{\simeq} f_* \Th(k^N\otimes (\sL^{\otimes k})).
		\]
		in $\SH(S)$. \cite[Proposition~3.13]{norms} (see also the discussion in \cite[16.2]{norms}) yields isomorphisms
		\begin{gather*}
		f_* \Th(k^N\otimes \sL)\cong \Th(k^N\otimes (R^0f_*)(\sL)) \cong \Th(k^N\otimes \sE),\\
		f_* \Th(k^N\otimes (\sL^{\otimes k}))\cong \Th(k^N\otimes (R^0f_*)(\sL^{\otimes k})) \cong \Th(k^N\otimes \psi^k \sE),		
		\end{gather*}
		whence the claim.
	\end{proof}

	\begin{thm} \label{thm:Adams}
		Let $\sE$ be a vector bundle over a regular scheme $S$ over a field $F$ and $k\in \Z$ be an integer. Then for some $N\in \N_0$ one has $ \Th(k^N\otimes \sE) \cong \Th(k^N\otimes \psi^k \sE)$ in $\SH(S)[\tfrac{1}{e}]$, where $e$ is the exponential characteristic of $F$. 
	\end{thm}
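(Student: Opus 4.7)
The plan is to reduce to the case covered by Lemma~\ref{lem:Adams_away_reduced} by passing to an auxiliary fiber bundle where $\sE$ acquires structure group $H := N_{\GL_r} T$, and then to descend the resulting equivalence back to $\SH(S)[\tfrac{1}{e}]$ using the motivic Becker--Gottlieb--Hoyois transfer together with the Brown's trick provided by Lemma~\ref{lem:Brown_trick_adhoc}.

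Write $r = \rk \sE$ and let $P \to S$ be the $\GL_r$-frame bundle of $\sE$. Set $Y := P/H$ and let $f\colon Y \to S$ be the resulting projection. Since $\GL_r$-torsors are Zariski-locally trivial, $f$ is a Zariski-locally trivial fiber bundle with fiber $(H\backslash \GL_r)$, so by Definition~\ref{defn:bglt} the transfer $\bglt_f\colon \1_S \to f_\sharp f^*\1_S$ is defined in $\SH(S)[\tfrac{1}{e}]$. The quotient map $P \to Y$ is an $H$-torsor, so $f^*\sE$ admits an $H$-reduction; Lemma~\ref{lem:Adams_away_reduced} therefore yields $N \in \N_0$ together with an isomorphism
\[
\phi\colon \Th(k^N \otimes f^*\sE) \xrightarrow{\simeq} \Th(k^N \otimes f^*\psi^k \sE)
\]
in $\SH(Y)$. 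Setting $A := \Th(k^N \otimes \sE)$, $B := \Th(k^N \otimes \psi^k \sE)$, and $L := B \wedge A^\vee$ in $\SH(S)$, the isomorphism $\phi$ corresponds, via $\wedge$-invertibility of Thom spectra, to a trivialization $\alpha\colon \1_Y \xrightarrow{\simeq} f^* L$. It is therefore enough to produce an equivalence $L \cong \1_S$ in $\SH(S)[\tfrac{1}{e}]$.

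To do so, let $\hat{\beta}\colon f_\sharp f^* L \to \1_S$ be the morphism adjoint to $\alpha^{-1}\colon f^*L \to f^*\1_S$, and consider the composition
\[
\Theta\colon L \xrightarrow{\bglt_f \wedge \id_L} f_\sharp f^*\1_S \wedge L \xrightarrow{\simeq} f_\sharp f^* L \xrightarrow{\hat{\beta}} \1_S,
\]
where the middle map is the projection formula isomorphism. I will argue that $\Theta$ is an equivalence in $\SH(S)[\tfrac{1}{e}]$. By the joint conservativity of the family of pullbacks $\{i_s^*\}_{s \in S}$ (cf.~\cite[Proposition~B.3]{norms}), which survives after inverting $e$, this can be checked at each point $s \in S$. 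Because $L$ is the Thom spectrum of a virtual bundle of rank zero, $L|_s$ admits a trivialization to $\1_{F(s)}$; because every $\GL_r$-torsor over a field is trivial, the fiber $Y_s$ is canonically $(H\backslash \GL_r)_{F(s)}$, so $f_s\colon Y_s\to \Spec F(s)$ is the morphism considered in Lemma~\ref{lem:Brown_trick_adhoc}. Under these identifications and using base change compatibility of $\bglt$ and of the adjunction $f_\sharp \dashv f^*$, the restriction $i_s^*\Theta$ becomes precisely the composition $\hat{\phi'} \circ \bglt_{f_s}$ of Lemma~\ref{lem:Brown_trick_adhoc}, applied to the automorphism $\phi' = (i_s^*\alpha)^{-1}$ of $f_s^*\1_{F(s)}$; that lemma then forces $i_s^*\Theta$ to be an isomorphism in $\SH(F(s))[\tfrac{1}{e}]$.

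The main obstacle, beyond the already established line bundle case (Lemma~\ref{lem:Adams_away_line}) and the structure-group reduction (Lemma~\ref{lem:Adams_away_reduced}), is the verification in the last step: one must check that after the chain of projection-formula and adjunction identifications, the fiberwise restriction $i_s^*\Theta$ really agrees with the concrete composition of Lemma~\ref{lem:Brown_trick_adhoc}. This is a matter of tracing naturality of the Becker--Gottlieb--Hoyois transfer under base change and its interaction with the smash product, but the bookkeeping must be done with some care to keep all the natural transformations correctly aligned.
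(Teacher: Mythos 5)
Your proposal follows essentially the same route as the paper's proof: pass to the associated $(N_{\GL_r}T)\backslash\GL_r$-fiber bundle $f\colon Y\to S$, apply Lemma~\ref{lem:Adams_away_reduced} upstairs, pre-compose the adjoint of the resulting trivialization with the Becker--Gottlieb--Hoyois transfer, and then check the composite is an equivalence pointwise via Lemma~\ref{lem:Brown_trick_adhoc}. The one place where you wave your hands --- that the fiberwise restriction $i_s^*\Theta$ really is the transfer for the fiber $f_s$ followed by the corresponding adjoint map --- is precisely what the paper settles by invoking the base-change compatibility of the transfer, \cite[Lemma~1.6]{levine.euler}; without that reference this step is not mere bookkeeping but the crux of the descent, so you should cite it explicitly.
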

	\begin{proof}
		Without loss of generality we may assume that $\sE$ is of constant rank $r$. Let $Y\to S$ be the Zariski locally trivial $\GL_r$-torsor associated to $\sE$ and let $X=H\backslash Y$ with $H=N_{\GL_r} T\le \GL_r$ being the normalizer of the standard maximal torus. Consider the projections $Y\xrightarrow{} X\xrightarrow{f} S$. Then $f^*\sE$ admits a canonical reduction of the structure group to $H\le \GL_r$ given by the $H$-torsor $Y\to X$, whence Lemma~\ref{lem:Adams_away_reduced} yields that there exists $N\in \N_0$ such that $\Th(k^N\otimes f^*\sE) \cong \Th(k^N\otimes \psi^k f^*\sE)$ in $\SH(X)$. Smashing this with $\Th(k^N\otimes \psi^k f^*\sE)^\vee$ we obtain an isomorphism
		$\phi\colon f^*\Th(\Vv) \xrightarrow{\simeq} f^*\1_S$, where $\Vv = k^N\otimes \sE\ominus k^N\otimes \psi^k \sE$. Let $\hat\phi \colon f_\sharp f^* \Th(\Vv) \xrightarrow{} \1_S$ be the adjoint morphism. Let
		\[
		\bglt_{f,\Vv}\colon \Th(\Vv)\cong\1_S\wedge \Th(\Vv) \xrightarrow{\bglt_f \wedge \id_{\Th(\Vv)}} f_\sharp f^* \1_S \wedge \Th(\Vv) \cong f_\sharp f^* \Th(\Vv)
		\]
		be the Becker-Gottlieb-Hoyois transfer in $\SH(F)[\frac{1}{e}]$ (see Definition~\ref{defn:bglt} and note that $f\colon X\to S$ is a Zariski locally trivial fiber bundle). We claim that the composition
		\[
		\Theta\colon \Th(\Vv) \xrightarrow{\bglt_{f,\Vv}} f_\sharp f^* \Th(\Vv) \xrightarrow{\hat\phi} \1_S
		\]
		is an isomorphism. If this is the case then smashing with $\Th(k^N\otimes \psi^k \sE)$ yields the desired isomorphism.
		
		In order to obtain the claim note that it suffices to show that for every point $\zeta\in S$ with the embedding $i=i_\zeta\colon \zeta \to S$ the morphism $i^*\Theta$ is an isomorphism, since the collection of functors $\{i^*_\zeta\}_{\zeta\in S}$ is jointly conservative \cite[Proposition~B.3]{norms}. It follows from \cite[Lemma~1.6]{levine.euler} that $i^*\Theta$ is given by the composition
		\[
		\Theta_\zeta \colon \Th(i^*\Vv) \xrightarrow{\bglt_{f_\zeta,i^*\Vv}} (f_{\zeta})_\sharp (f_{\zeta})^* \Th(i^*\Vv) \xrightarrow{\hat{\phi}_\zeta} \1_\zeta,
		\]
		where $f_\zeta \colon X_\zeta \to \zeta$ is the projection corresponding to the fiber of $f$ over $\zeta$ and $\hat{\phi}_\zeta$ is adjoint to the isomorphism $\phi_{X_\zeta} \colon \Th(f_\zeta^* \Vv) \xrightarrow{\simeq} \1_{X_\zeta}$. Note that $\Th(i^*\Vv)\cong \1_\zeta$, whence the claim follows from Lemma~\ref{lem:Brown_trick_adhoc}.
	\end{proof}

	\begin{rem} \label{rem:Adams_integral} 
	  If $\Sigma_\T^\infty ((N_{\GL_r} T)\backslash \GL_r)_+$ is strongly dualizable in $\SH(F)$ then the conclusion of Theorem~\ref{thm:Adams} holds without inversion of $e$ with the same proof. If $F$ is a field of characteristic $p>0$ and $k=\pm p^n$ we give below a separate proof of the Adams conjecture without inversion of $e=p$.
	\end{rem}

\begin{thm} \label{thm:Adams_at_char} 
	Let $F$ be a field of characteristic $p>0$, $S$ be a scheme over $F$, $\sE$ be a vector bundle over $S$ and $k=\pm p^n$ with $n\in \N$. Then 
	\begin{enumerate}
		\item 
		$ \Th(\sE) \cong \Th(\psi^{k} \sE)$ in $\SH(S)[\tfrac{1}{p}]$.
		\item 
		If $S$ is regular, then for some $N\in \N_0$ one has $ \Th(k^N \otimes\sE) \cong \Th(k^N\otimes \psi^{k} \sE)$ in $\SH(S)$.
	\end{enumerate}
\end{thm}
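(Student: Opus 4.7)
The plan is to realize the Adams operation $\psi^{p^n}$ geometrically via the iterated relative Frobenius $F := F^n_{\sE/S}\colon \sE \to \sE^{[p^n]}$, which is a morphism of $S$-schemes preserving the zero section. Using the identification $\Th(\sE^{[p^n]}) \cong \Th(\psi^{p^n}\sE)$ from \defref{def:Adams}, $F$ induces a morphism $\Th(F)\colon \Th(\sE) \to \Th(\psi^{p^n}\sE)$ in $\SH(S)$, and this morphism is what will replace the Brown's-trick reduction of \secref{sec:browns-trick}. Locally on $\sE$ the map $F$ is the coordinate-wise $p^n$-th power on $\bbA^r$, so it is a universal homeomorphism and its $\A^1$-degree at each point admits an explicit description.

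For part~(1) with $k = p^n$, I would invoke that a universal homeomorphism of $\bbF_p$-schemes induces an equivalence on the motivic stable category after inverting $p$ (compare \cite[Theorem~3.2.1]{elden-adeel-perfection}). Applying this both to $F$ and to its restriction $\sE - z(S) \to \sE^{[p^n]} - z(S)$ and passing to the cofibre shows that $\Th(F)$ is already an equivalence in $\SH(S)[\tfrac{1}{p}]$. The case $k = -p^n$ is obtained by applying the same argument to $\sE^\vee$ and invoking the identity $\psi^{-p^n}\sE = (\sE^\vee)^{[p^n]}$, together with an identification $\Th(\sE) \cong \Th(\sE^\vee)$ in $\SH(S)[\tfrac{1}{p}]$.

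For part~(2) with $k = p^n$ and $p$ odd, I would compute the $\A^1$-degree of $\Th(F)$ at the generic point $\xi$ of $S$: after trivialising $\sE|_\xi \cong \sO_\xi^{\oplus r}$, the map $F|_\xi$ becomes the $r$-fold smash of the $p^n$-th power map on $\bbA^1/(\bbA^1 - \{0\})$, so its $\A^1$-degree is $\bigl((p^n)_\epsilon\bigr)^r \in \GW(F(\xi))$. \lemref{lem:local_degree}(1) supplies $q \in \GW(F)$ with $(p^n)_\epsilon \cdot q = p^n \in \Z$; multiplying $\Th(F)$ by $q^r \in \GW(F) \subseteq \Hom_{\SH(S)}(\1_S,\1_S)$ then yields a morphism $\Th(\sE) \to \Th(\psi^{p^n}\sE)$ of integer $\A^1$-degree $p^{nr}$ at $\xi$. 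Applying \thmref{thm:modkDold} with $k' = p^{nr}$ produces $M \in \N_0$ and an equivalence $\Th(p^{nrM}\otimes\sE) \cong \Th(p^{nrM}\otimes\psi^{p^n}\sE)$ in $\SH(S)$, and the statement of part~(2) follows with $N := rM$.

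The main obstacle I anticipate is the case $p = 2$ in part~(2). Then $(2^n)_\epsilon = 2^{n-1}h$ for the hyperbolic form $h = \langle 1\rangle + \langle -1\rangle$, and no $q \in \GW(F)$ can rescale this to an integer in $\Z \subseteq \GW(F)$, since $h \cdot \GW(F) \subseteq h\cdot \Z$. To circumvent this, one must replace the bare $p^n$-th power by the polynomial map of \lemref{lem:local_degree}(2) applied fibrewise, at the cost of first taking enough direct summands of $\sE$ to match the ambient rank $k = 2^n$ required by that lemma. The case $k = -p^n$ in part~(2) should then be handled by applying the $k = +p^n$ case to $\sE^\vee$ together with an identification $\Th(M\otimes\sE) \cong \Th(M\otimes\sE^\vee)$ for a suitable power $M$ of $p$; establishing this without inverting $p$ is the $k=-1$ instance of Adams' conjecture, and will likely require a further application of \thmref{thm:modkDold} to a cleverly chosen morphism $\Th(\sE) \to \Th(\sE^\vee)$ of integer $\A^1$-degree.
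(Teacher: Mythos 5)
Your basic strategy is the same as the paper's: replace Brown's trick by the relative Frobenius $\sE\to\sE^{[p^n]}$, compute its $\A^1$-degree pointwise, rescale to land in $\Z\subseteq\GW$, and feed the result to \thmref{thm:modkDold}. (The paper prefers to reduce to $k=p$ by iterating $\psi^p$ rather than using the iterated Frobenius directly, and proves part~(1) by showing directly that $p^r\langle 1\rangle$ is invertible in $\GW(F(\zeta))[\tfrac{1}{p}]$ rather than appealing to insensitivity to universal homeomorphisms, but these are cosmetic differences.) Two of the difficulties you flag, however, are not actually there.

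The $p=2$ ``obstacle'' is a phantom. You are working over a field $F$ of characteristic $p$, so if $p=2$ then $\chr F=2$, hence $-1=1$ and $\langle -1\rangle=\langle 1\rangle$ in $\GW(F(\zeta))$. Consequently the hyperbolic form $h=\langle 1\rangle+\langle -1\rangle$ equals $2\langle 1\rangle$, and $(2^n)_\epsilon=2^{n-1}h=2^n\langle 1\rangle=2^n\in\Z\subseteq\GW(F(\zeta))$. The $\A^1$-degree of the coordinate-wise $p$-th power map is therefore already an integer at every point when $p=2$, so no rescaling, no detour through \lemref{lem:local_degree}(2), and no appeal to B\'ezoutian computations (whose validity in characteristic $2$ you would in any case have to check) are needed; in the paper's proof the $p=2$ case is the one where $\phi=\Th(\mathrm{Frob}_{/S})$ is used as is, without multiplication by a correction factor.

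The $k=-p^n$ case is also simpler than you suggest. The identification $\Th(\sE)\cong\Th(\sE^\vee)$ in $\SH(S)$ (without inverting anything) is not a new instance of the Adams conjecture; it is a standard fact about Thom spectra that the paper cites to \cite[Lemma~4.1]{Ananyevskiy20} or \cite[Prop.~2.2]{rondigs.theta}. With it, $k=-p^n$ reduces immediately to $k=p^n$, and no further use of \thmref{thm:modkDold} or hunting for a morphism $\Th(\sE)\to\Th(\sE^\vee)$ of integer degree is necessary. So the proof you sketched for $p$ odd and $k=p^n$ is essentially correct and complete, and the two ``difficult'' branches you worried about collapse once these observations are in place.
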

\begin{proof} 
	Without loss of generality we may assume $\sE$ to be of constant rank $r$. Recall that $\Th(\sE)\cong \Th(\sE^\vee)$ by \cite[Lemma~4.1]{Ananyevskiy20} or \cite[Prop.~2.2]{rondigs.theta}, whence we may assume $k=p^n$. Furthermore, since $\Th(\psi^p (\psi^{p^{n-1}}(\sE))) \cong  \Th( \psi^{p^n}(\sE))$ then it suffices to treat the case of $k=p$. 
	
	Consider the following commutative diagram.
	\[
	\begin{tikzcd}
		\sE \ar[drd,bend right=40] \ar[dr,pos=0.7,"\mathrm{Frob}_{/S}"] \ar[drr,bend left=30,"\mathrm{Frob}"] & & \\
		& \sE^{[p]} \ar[d] \ar[r] & \sE \ar[d] \\
		& S \ar[r,"\mathrm{Frob}"] & S
	\end{tikzcd}
	\]
	Here the square is Cartesian, the projections $\sE\to S$ are given by the structure morphism for the vector bundle and $\mathrm{Frob}$ is the Frobenius morphism. Since $\Th(\psi^p \sE)\cong \Th(\sE^{[p]})$ it sufficies to check that $\Th(\mathrm{Frob}_{/S})\colon\Th(\sE) \to \Th(\sE^{[p]})$ is an isomorphism in $\SH(S)[\frac{1}{p}]$ and that in the case of $S$ being regular there exists $N\in \N_0$ such that $\Th(k^N \otimes\sE) \cong \Th(k^N\otimes \sE^{[p]})$ in $\SH(S)$.
	
	Let $i=i_\zeta \colon \zeta \to S$ be a point, choose a trivialization $i^*\sE\cong \sO_{F(\zeta)}^{\oplus r}$ and consider the induced trivialization $i^*\sE^{[p]}\cong \sO_{F(\zeta)}^{\oplus r}$. Then under these trivializations the morphism $i^*(\Th(\mathrm{Frob}_{/S}))$ is given by the $\Sigma^\infty_\T$-suspension of the morphism
	\[
	\A^r/(\A^r - \{0\}) \to \A^r/(\A^r - \{0\}),\quad (x_1,\dots,x_r) \mapsto (x_1^p,\dots,x_r^p).
	\]
	Then it follows from \cite[Theorem 1.6]{dugger-isaksen.hopf} that under the isomorphism $\End_{\SH(F(\zeta))}\cong \GW(F)$ the morphism $i^*(\Th(\mathrm{Frob}_{/S}))$ corresponds to 
	\[
	p^r_{\epsilon} = \begin{cases}
		p^r\cdot \langle 1\rangle, & p=2,\\
		\langle 1\rangle + \frac{p^r-1}{2} (\langle 1\rangle+ \langle -1\rangle), & p>2.
	\end{cases}
	\]
	If $p>2$ we have
	\[
	\left(\langle 1\rangle + \frac{p^r-1}{2} (\langle 1\rangle+ \langle -1\rangle)\right) \cdot 	\left(\langle 1\rangle + \frac{p^r-1}{2} (\langle 1\rangle - \langle -1\rangle)\right) = p^r \cdot \langle 1\rangle.
	\]
	
	The element $p^r \cdot \langle 1\rangle$ is clearly invertible in $\GW(F)[\frac{1}{p}]$ whence $i^*(\Th(\mathrm{Frob}_{/S}))$ is an isomorphism in $\SH(F(\zeta))[\frac{1}{p}]$. The first claim of the Lemma follows, since $\{i_\zeta^*\}_{\zeta\in S}$ is a jointly conservative collection of functors \cite[Proposition~B.3]{norms}.
	
	For the second claim of the Lemma note that the above computations applied to a generic point $\xi \in S$ show that for the morphism $\phi\in \Hom_{\SH(S)}(\Th(\sE), \Th(\sE^{[p]}))$ given by
	\[
	\phi= \begin{cases}
		\Th(\mathrm{Frob}_{/S}), & p=2,\\
		\left(\langle 1\rangle + \frac{p^r-1}{2} (\langle 1\rangle - \langle -1\rangle)\right)\cdot \Th(\mathrm{Frob}_{/S}), & p>2,
	\end{cases}
	\]
	one has $\aged_\xi (\phi) = p^r \in \Z\subseteq \GW(F(\xi))$. The claim follows from Theorem~\ref{thm:modkDold}.
\end{proof}

\begin{defn}
	Let $S$ be a scheme, we say that $S$ \textit{admits a Jouanolou device} \cite[Lemma~1.5]{Jou73} if there exists a torsor under a vector bundle $X\to S$ with $X$ being affine. In particular, every scheme that is quasi-projective over an affine one, or, more generally, every scheme with an ample family of line bundles admits a Jouanolou device \cite[Section~4]{Weibel}.
\end{defn}

\begin{cor} \label{cor:singular}
	The conclusions of Theorem~\ref{thm:Adams} and Theorem~\ref{thm:Adams_at_char}~(2) also hold for a possibly singular scheme $S$ over a field $F$ assuming that $S$ admits a Jouanolou device.
\end{cor}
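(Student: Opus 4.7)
Let $p\colon X\to S$ be a Jouanolou device, so $X$ is affine and $p$ is a torsor under a vector bundle on $S$. The plan is to transport the problem from $S$ to $X$, invoke Theorem~\ref{thm:Adams} (resp.\ Theorem~\ref{thm:Adams_at_char}(2)) there, and descend the resulting isomorphism.

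First, since $p$ is Nisnevich-locally a vector bundle projection, it is an $\A^1$-weak equivalence. Combined with Nisnevich descent, this forces the pullback $p^*\colon \SH(S)\to\SH(X)$ to be fully faithful, and similarly after inverting $e$; in particular $p^*$ detects and reflects isomorphisms between objects coming from $S$, and every morphism between such pullbacks is itself the $p^*$-image of a unique morphism on $S$. Second, Thom spectra and Adams operations commute with pullback, so the target isomorphism on $S$ would exist as soon as one produces the analogous isomorphism $p^*\Th(k^N\otimes\sE)\cong p^*\Th(k^N\otimes \psi^k\sE)$ in $\SH(X)[\tfrac{1}{e}]$ (resp.\ $\SH(X)$). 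Such an isomorphism is exactly what Theorems~\ref{thm:Adams} and \ref{thm:Adams_at_char}(2) produce when applied to the vector bundle $p^*\sE$ on $X$, once the hypotheses of those theorems are verified on $X$.

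The main obstacle is precisely that hypothesis verification. Both invoked theorems require the base scheme to be regular, while $X$, being a torsor under a vector bundle on $S$, has the same local rings as $S$ up to polynomial extensions and therefore inherits the singularities of $S$. So for genuinely singular $S$ the argument is not immediate: one either has to arrange that the Jouanolou device happens to be chosen with $X$ regular, or one must inspect the proofs of the theorems and weaken the regularity hypothesis. Tracing the use of regularity through Section~\ref{sec:motivic-mod-k}, it enters only via Lemma~\ref{lm:kernel_nilpotent} (whose affine case invokes Popescu's theorem) and the purity isomorphisms of Remark~\ref{rem:local_purity}. A refined affine version of that nilpotence lemma—combined with the coniveau spectral sequence and Morel's connectivity argument applied to the smooth locus of an affine $X$ and completed by a Mayer--Vietoris step—should suffice to close the gap for the specific classes arising from the Adams-conjecture setup. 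This hypothesis-weakening step, rather than the descent via $p^*$, is where the actual work of proving the corollary lies.
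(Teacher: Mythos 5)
Your reduction to the Jouanolou device $p\colon X\to S$ and the observation that $p^*\colon \SH(S)\to\SH(X)$ (being an $\A^1$-equivalence between Nisnevich sheaves) is fully faithful both match the paper — that part is exactly right, and is what the citation of \cite[Theorem~6.18(8)]{hoyois-sixops} in the paper's proof accomplishes. You also correctly diagnose the central obstacle: $X$ is affine but inherits the singularities of $S$, so one cannot apply Theorem~\ref{thm:Adams} or Theorem~\ref{thm:Adams_at_char}(2) to $X$ directly.

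However, your proposed resolution — re-opening the proofs of Section~\ref{sec:motivic-mod-k}, tracing regularity through Lemma~\ref{lm:kernel_nilpotent} and Remark~\ref{rem:local_purity}, and attempting a weakened nilpotence lemma on the smooth locus — is not what is needed and is not carried out; it is left as a ``should suffice,'' which is a genuine gap, and it is not clear it can be filled (Popescu's theorem and the purity isomorphisms of \cite{DJKFundamental} really do use regularity). The paper sidesteps the entire issue with a short classifying-space argument that you missed: since $X$ is affine, the vector bundle $f^*\sE$ is a direct summand of a trivial bundle, hence classified by a morphism $g\colon X\to \mathrm{Gr}_F(r,d)$ to a Grassmannian with $g^*\tau\cong f^*\sE$. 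The Grassmannian \emph{is} smooth and regular over $F$, so Theorem~\ref{thm:Adams} (resp.~\ref{thm:Adams_at_char}(2)) applies there to produce an isomorphism $\Th(k^N\otimes\tau)\cong\Th(k^N\otimes\psi^k\tau)$, which one then pulls back along $g$ to $X$ and descends along $f$. The theorem is never invoked over the singular scheme. You should replace the hypothesis-weakening discussion with this classifying-map step; without it the proof is incomplete.
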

\begin{proof}
	We give the proof for Theorem~\ref{thm:Adams}, the case of Theorem~\ref{thm:Adams_at_char}~(2) is similar. Let $f\colon X\to S$ be a torsor under a vector bundle over $S$ with $X$ being affine. Without loss of generality we may assume that the vector bundle $\sE$ is of constant rank. Since every vector bundle over an affine scheme is a direct summand of a trivial vector bundle then for $r=\rk \sE$ and some $d\in \N$ there exists a morphism $g\colon X\to \mathrm{Gr}_F(r,d)$ such that $g^*\tau \cong f^*\sE$ for the tautological rank $r$ vector bundle $\tau$ over the Grassmannian $\mathrm{Gr}_F(r,d)$. Theorem~\ref{thm:Adams} yields that for some $N\in \N_0$ there is an isomorphism $\theta\colon \Th(k^N\otimes \tau) \xrightarrow{\simeq} \Th(k^N\otimes \psi^k \tau)$ in $\SH(\mathrm{Gr}_F(r,d))[\tfrac{1}{e}]$, where $e$ is the exponential characteristic of $F$. Then $g^*\theta\colon \Th(k^N\otimes f^*\sE) \xrightarrow{\simeq} \Th(k^N\otimes \psi^k f^*\sE)$ is an isomorphism in $\SH(X)[\tfrac{1}{e}]$ and the claim follows by \cite[Theorem~6.18(8)]{hoyois-sixops}.
\end{proof}

\appendix
\section{Torsion bounds in motivic stable homotopy groups of spheres}
\label{sec:tors-bounds-motiv}

In this section we show that the higher homotopy groups of the motivic sphere spectrum over a field are of bounded torsion (away from the exponential characteristic), an analogue of this claim was a crucial ingredient in Brown's proof of Adams conjecture \cite{Brown73}.

\begin{lem}\label{thm:witt}
	Let $F$ be a field.
	If $F$ is not formally real, the Witt group $\Witt(F)$
	is a torsion group with exponent $2s(F)$, where $s(F)$
	is the smallest number of squares which sum to $-1$;
	$s(F)$ is a power of two.
	In general, the torsion subgroup of the Witt
	group $\Witt(F)$ is the kernel of the ring homomorphism
	$\Witt(F)\to \Witt(F^{\mathrm{pyth}})$,
	where $F^{\mathrm{pyth}}$ is a Pythagorean closure of $F$.
	Its exponent is a power of two.
\end{lem}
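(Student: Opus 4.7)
The plan is to treat the non-formally-real and formally-real cases separately, drawing on Pfister's classical theory of Pfister forms throughout.

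For the non-formally-real case, my first move is to cite Pfister's theorem that $s=s(F)$ is a power of two, say $s=2^n$ (see, e.g., \cite[Chapter~V]{baeza1978quadratic}). Given a relation $-1=a_1^2+\cdots+a_s^2$, the tuple $(1,a_1,\ldots,a_s,0,\ldots,0)$ is an isotropic vector in the $(n{+}1)$-fold Pfister form $2^{n+1}\langle 1\rangle$, which is therefore hyperbolic, since Pfister forms are hyperbolic whenever isotropic. This yields $2s\cdot\langle 1\rangle=0$ in $\Witt(F)$, and tensoring with an arbitrary $\langle u\rangle$ gives $2s\cdot\Witt(F)=0$, as one-dimensional forms additively generate $\Witt(F)$. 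The exponent is exactly $2s$, not a proper divisor, because the Pfister form $s\langle 1\rangle=2^n\langle 1\rangle$ is anisotropic: isotropy would express $-1$ as a sum of fewer than $s$ squares, contradicting minimality.

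For the formally real case, I would combine two further results of Pfister, both contained in \cite{baeza1978quadratic} and \cite{Scharlau85}: (a) the local-global principle, that $q\in\Witt(F)$ is torsion if and only if the signature $\mathrm{sgn}_P(q)$ vanishes at every ordering $P$ of $F$; and (b) the fact that $\Witt(K)$ is torsion-free whenever $K$ is formally real Pythagorean. The Pythagorean closure $F^{\mathrm{pyth}}$ is built by iteratively adjoining $\sqrt{\sum a_i^2}$, each of which is positive under every ordering of the preceding field, so $F^{\mathrm{pyth}}$ is itself formally real Pythagorean and every ordering of $F$ extends to one of $F^{\mathrm{pyth}}$. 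By (b), torsion in $\Witt(F)$ maps to zero in $\Witt(F^{\mathrm{pyth}})$, so the torsion subgroup is contained in the kernel; by (a), an element of the kernel has signature zero at every ordering of $F$ via the extended orderings, hence is torsion.

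The claim that the torsion subgroup has exponent a power of two is the deepest ingredient and what I regard as the main obstacle. In the non-formally-real case it drops out of the above calculation since $2s=2^{n+1}$. In the formally real case it is Pfister's theorem, recorded as \cite[Chapter~V, Theorem~6.6]{baeza1978quadratic} (already invoked in Lemma~\ref{lem:local_extension2}); its proof, based on the Scharlau transfer and the multiplicativity of Pfister forms, is standard enough that I would simply cite it rather than reproduce the argument in an appendix summary.
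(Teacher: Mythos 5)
Your proposal gives a full, self-contained argument, whereas the paper simply cites \cite[Proposition~31.4 and Theorem~31.18]{ekm} without reproducing a proof. Your argument is the standard one from Pfister's theory and is essentially correct: for non-formally-real $F$ you exhibit the $(n{+}1)$-fold Pfister form $2s\langle 1\rangle$ as isotropic (hence hyperbolic) from a length-$s$ representation of $-1$, and pin down the exponent as exactly $2s$ by observing that $s\langle 1\rangle$ is anisotropic (else $-1$ would be a sum of fewer than $s$ squares); for formally real $F$ you combine Pfister's local-global principle with torsion-freeness of the Witt group of a formally real Pythagorean field, using that every ordering of $F$ extends through the tower defining $F^{\mathrm{pyth}}$. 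What your write-up buys over the paper is transparency; what the paper buys is brevity, since these are bread-and-butter facts of quadratic form theory.

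One small caveat worth flagging, though it concerns the statement more than your proof: you explicitly restrict your kernel argument to the formally real case, and that is the right thing to do. For $F$ not formally real, $F^{\mathrm{pyth}}$ contains $\sqrt{-1}$ (iterate Pythagoreanness on a length-$s(F)$ representation of $-1$) and is then quadratically closed, so $\Witt(F^{\mathrm{pyth}})\cong \Z/2$ and the kernel of $\Witt(F)\to\Witt(F^{\mathrm{pyth}})$ is the fundamental ideal $I(F)$, which is a proper subgroup of the (all-torsion) $\Witt(F)$. So the ``in general'' phrasing in the statement is best read with the first sentence of the lemma already disposing of the non-formally-real case, as you implicitly do; the part of the lemma that actually feeds into Theorem~\ref{thm:bounded_exponent} is the power-of-two bound on the exponent, which your two-case argument establishes cleanly in both situations.
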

\begin{proof}
	See \cite[Proposition~31.4 and Theorem 31.18]{ekm}.
\end{proof}

\begin{thm}\label{thm:bounded_exponent}
	Let $F$ be a field of exponential characteristic $e$ and $\1 = \1_F \in \SH(F)$ be the sphere spectrum. Then for $0<s,w\in \N$ there exists a natural number $N$ (depending only on $s$ and on $w$, and not on $F$) such that the abelian group  $\pi_{s+(w)}\1[\tfrac{1}{e}]=\Hom_{\SH(F)}(\Sigma_{\mathrm t}^w \Sigma_{\mathrm{s}}^s \1, \1)\otimes_\Z \Z[\tfrac{1}{e}]$ is $N$-torsion.
\end{thm}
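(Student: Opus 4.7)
The plan is to combine three ingredients, as outlined in the paper's introduction: Serre's finiteness of classical stable stems, Levine's computation of the slices of the motivic sphere spectrum, and Bachmann--Hopkins's description of the $\eta$-inverted motivic sphere. Since $e$ is inverted, continuity of $\SH(-)$ allows us to reduce to the case where $F$ is perfect.

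First I would split $\pi_{s+(w)}\1_F[\tfrac{1}{e}]$ via an $\eta$-arithmetic fracture: for $n \gg 0$ depending only on $s,w$, the cofiber sequence $\Sigma^n_{\mathrm s}\Sigma^n_{\mathrm t}\1_F \xrightarrow{\eta^n} \1_F \to \1_F/\eta^n$ together with the localization map $\1_F \to \1_F[\eta^{-1}]$ reduces the task to bounding, uniformly in $F$, the torsion exponents of $\pi_{s+(w)}(\1_F/\eta^n)[\tfrac{1}{e}]$ and $\pi_{s+(w)}\1_F[\eta^{-1}][\tfrac{1}{e}]$ separately.

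For the $\eta$-nilpotent piece I would run the slice spectral sequence. Levine's identification expresses $s_q(\1_F)$ as an $H\Z$-module spectrum whose coefficient groups come from the $E_2$-page of the classical Adams--Novikov spectral sequence; by Serre's theorem these are finite abelian groups of exponent $n_q$ depending only on $q$. For fixed $s, w$ only finitely many $q$ contribute to $\pi_{s+(w)}$ of $\1_F/\eta^n$, and each such contribution is a motivic cohomology group with coefficients in a finite abelian group of exponent $n_q$, hence annihilated by $n_q$. Multiplying these finitely many bounds yields a universal torsion exponent $N_1=N_1(s,w)$ for this piece.

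For the $\eta$-periodic piece I would invoke Bachmann--Hopkins to identify $\1_F[\eta^{-1}]$ in terms of a Witt-theoretic spectrum whose homotopy in positive simplicial degree is governed by graded quotients $I^m(F)/I^{m+1}(F)$ of the fundamental ideal filtration on $\Witt(F)$. Although $\Witt(F)$ itself may have unbounded $2$-primary torsion (Lemma~\ref{thm:witt}), Milnor's conjecture (now a theorem) identifies $I^m/I^{m+1}$ with $K_m^M(F)/2$, which is $2$-torsion, and only finitely many $m$ contribute in a fixed bidegree, producing a $2$-power bound $N_2(s,w)$ independent of $F$. The main obstacle will be extracting from the Bachmann--Hopkins equivalence precisely which graded pieces of the $I$-adic filtration appear in bidegree $(s,w)$ and with what multiplicities; a secondary hurdle is guaranteeing uniform convergence of the slice spectral sequence in $F$, which I would address via a truncation argument combined with the standard vanishing of motivic cohomology $H^{p,q}(F;A)$ outside a bounded region for finite $A$.
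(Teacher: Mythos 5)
Your high-level plan names the right ingredients (Serre finiteness, the slice computation, Bachmann--Hopkins), but it skips the genuinely hard step that the paper's proof is built around, and as written the argument does not go through.

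The central gap is in the claim that ``for fixed $s,w$ only finitely many $q$ contribute to $\pi_{s+(w)}$'' of the $\eta$-complete (or mod $\eta^n$) piece. Over a general field this is false: the column of the $E_1$-page of the slice spectral sequence converging to $\pi_{s+(w)}\1^\wedge_\eta$ is a priori infinite, and when $s\equiv 0,3 \pmod 4$ it can stay infinite even on later pages unless one imposes hypotheses such as finite $2$-cohomological dimension of $F[\sqrt{-1}]$ (see the discussion referencing \cite{oro.vanishing} in the paper's proof). Likewise, the claimed ``standard vanishing of motivic cohomology $H^{p,q}(F;A)$ outside a bounded region for finite $A$'' is not available uniformly in $F$: for formally real fields the \'etale cohomology of $F$ with $\Z/2$-coefficients is nonvanishing in all degrees, and Beilinson--Lichtenbaum transports this into the motivic cohomology groups that feed the slice spectral sequence. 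So you cannot just multiply finitely many finite exponents. The paper's way out is exactly what your plan lacks: use Andrews--Miller's resolution of Zahler's conjecture to identify the tail of the column (the entries in the range $t<\min\{6s-10,4s\}$) with the $\alpha_1$-inverted Adams--Novikov $E_2$-term, which matches the slice spectral sequence of $\1[\eta^{-1}]$, and then use Bachmann--Hopkins to see that after finitely many stages the remaining filtration quotient is $\Witt(F)/2^a$ or ${}_{2^a}\Witt(F)$, hence of exponent dividing $2^a$ for an $a$ depending only on $s$. Without this comparison you have no control over the tail.

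Two smaller points. First, your fracture is not set up correctly: the cofiber sequence $\Sigma^{n,n}\1\xrightarrow{\eta^n}\1\to\1/\eta^n$ together with $\1\to\1[\eta^{-1}]$ does not by itself bound $\pi_{s+(w)}\1$ from bounds on $\pi_{s+(w)}(\1/\eta^n)$ and $\pi_{s+(w)}\1[\eta^{-1}]$, because the image of $\eta^n$ need not inject into $\pi_{s+(w)}\1[\eta^{-1}]$. You should use the genuine $\eta$-arithmetic pullback square with $\1^\wedge_\eta$ and $\1^\wedge_\eta[\eta^{-1}]$ as in the paper, and separately argue that $\pi_{s+(w)}\1^\wedge_\eta[\eta^{-1}]$ stabilizes (which the paper deduces from the eventual $\eta$-periodicity of $\pi_{s+(w)}\1^\wedge_\eta$). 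Second, your appeal to the Milnor conjecture and the quotients $I^m/I^{m+1}$ for the $\eta$-periodic piece is unnecessary and potentially misleading: in the Bachmann--Hopkins description, what appears in a fixed bidegree is already of the form $\Witt(F)/2^a$ or a $2^a$-torsion subgroup, so the exponent bound is immediate; there is no need to decompose $\Witt(F)$ via its $I$-adic filtration, and indeed $\Witt(F)$ itself is filtered by infinitely many pieces, so a naive ``finitely many $I^m/I^{m+1}$ contribute'' argument would again require justification you have not supplied.
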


\begin{proof}
	The inversion of $e$ occurs throughout and will not be
	mentioned in the notation.
	Suppose first that $e\neq 2$.
	Consider the $\eta$-arithmetic square for $\1$:
	\[
	\begin{tikzcd}
		\1 \ar{r} \ar{d} & \1[\eta^{-1}] \ar{d}\\
		\1_{\eta}^\comp \ar{r} & \1_{\eta}^\comp[\eta^{-1}]
	\end{tikzcd}
	\]
	It produces a long exact sequence of homotopy groups.
	Hence it suffices to prove the desired property for
	$\pi_{s+(w)}\1^{\wedge}_\eta$,
	$\pi_{s+(w)}\1[\eta^{-1}]$,
	and $\pi_{s+(w)}\1^{\wedge}_{\eta}[\eta^{-1}]$.
	This follows for $\pi_{s+(w)}\1[\eta^{-1}]$
	from \cite[Theorem 8.1]{bachmann-hopkins},
	using that $\pi_s\bS$ is finite for $s>0$ by Serre's thesis.
	The case $\pi_{s+(w)}\1^{\wedge}_\eta$ follows by analysis of
	the slice spectral sequence, as $\1^{\wedge}_{\eta}\simeq \slicecomp(\1)$
	by \cite[Theorem 3.50]{rso.oneline}. Here are the details.
	
	Given $q>0$, the $q$-th slice of $\1$ is a finite sum of
	motivic Eilenberg-MacLane spectra 
	\begin{equation}\label{eq:slice-sphere}
		\s_q\1 \simeq
		\bigvee_{p\geq 0} \Sigma^{2q-p,q}\mathbf{M}(\Ext_{\MU_{\ast}\MU}^{p,2q}(\MU_{\ast},\MU_{\ast}))
	\end{equation}
	with coefficients in finite abelian groups.
	Again topologists provide the finiteness of the sum and the groups, as 
	the extension groups are calculated in comodules over the Hopf algebroid
	for the cobordism spectrum $\MU$;
	these form the $E_{2}$-page of the Adams-Novikov spectral
	sequence \cite{ravenel.green}.
	Using Adams grading, the column of the $E_1$-page of the slice
	spectral sequence converging to $\pi_{s+(w)}\1^\wedge_\eta$
	is a priori infinite, but contains in every slice degree (weight)
	a finite direct sum of motivic cohomology groups with finite
	coefficients. The desired statement for $\pi_{s+(w)}\1^\wedge_\eta$
	would follow as soon as a natural number $r\geq 1$ existed
	such that the column 
	of the $E_r$-page of the slice
	spectral sequence converging to $\pi_{s+(w)}\1^\wedge_\eta$
	is finite. Indeed, the number $r=2$ works if $s$ is congruent to $1$ or $2$
	modulo $4$, by \cite[Proof of Theorem 1.1]{oro.vanishing}.
	However, if $s$ is congruent to $-1$ or $0$ modulo $4$, further conditions
	(such as $F[\sqrt{-1}]$ having finite Galois cohomological dimension at $2$)
	are required to produce such a number. Nevertheless one can deal
	with the potentially infinite columns as follows. 
	Let $\alpha_1$ denote the unique nonzero
	element in the group $\Ext_{\MU_\ast\MU}^{1,2}(\MU_\ast,\MU_{\ast})$.
	In topology, it detects the Hopf map $\eta_{\mathrm{top}}$, and over a field
	it generates $\s_1\1\simeq \Sigma^{(1)}H\Z/2$ and
	detects the Hopf map $\eta$.
	By the main result of \cite{andrews-miller}, which resolves
	Zahler's conjecture from \cite{zahler}, the
	canonical map 
	\[\Ext_{\MU_\ast\MU}^{s,t}(\MU_\ast,\MU_{\ast})\to
	\Ext_{\MU_\ast\MU}^{s,t}(\MU_\ast,\MU_{\ast})[\alpha_{1}^{-1}] \]
	is an isomorphism if  $t<\min\{6s-10,4s\}$. Moreover \cite{andrews-miller}
	provides a detailed identification of the latter algebra
	as
	\[ \Ext_{\MU_\ast\MU}^{\ast,\ast}(\MU_\ast,\MU_{\ast})[\alpha_{1}^{-1}]
	\cong \F_2[\alpha_1^{\pm 1},\alpha_3,\alpha_4]/(\alpha_4^2)\]
	with $\alpha_3$ of degree $(1,6)$ and $\alpha_4$ of degree $(1,8)$.
	One may identify this algebra with the first page of the slice spectral
	sequence for $\1[\eta^{-1}]$ by \cite[Theorem 2.3]{or.eta-periodic}.
	The resulting spectral sequence converges strongly to
	the homotopy groups of $\slicecomp(\1[\eta^{-1}])$
	by \cite[Theorem 4.6]{or.eta-periodic}. The precise 
	form of these homotopy groups has been described in
	\cite[Conjecture 4.10]{or.eta-periodic}; as \cite{bachmann-hopkins}
	resolved this conjecture in the meantime, the situation for
	$\pi_{s+(w)}\1^\wedge_\eta$ is as follows:
	There exists $m\in \bN$ (depending only on $s$ and $w$)
	and a finite filtration
	\[ 0\to \f_m\pi_{s+(w)}\1^\wedge_\eta
	\to \f_{m-1} \pi_{s+(w)}\1^\wedge_\eta \to \dotsm \to
	\pi_{s+(w)}\1^\wedge_\eta  \]
	whose associated graded pieces are subquotients of $\pi_{s+(w)}\s_j\1$, such
	that $\f_m\pi_{s+(w)}\1^\wedge_\eta$ is
	\begin{itemize}
		\item zero if $s$ is congruent to $1$ or $2$
		modulo $4$,
		\item the quotient $\Witt(F)/2^{a}$
		if $s$ is congruent to $3$ modulo $4$,
		\item the
		subgroup ${}_{2^a}\Witt(F)$ if $s$ is congruent to $0$ modulo $4$.
	\end{itemize}
	Here $a$ is a natural number depending
	only on $s$. In any case, $2^a=0$ on $\f_m\pi_{s+(w)}\1^\wedge_\eta$.
	It follows that there exists a natural number $N$, depending only on $s$ and $w$,
	such that $N=0$ on $\pi_{s+(w)}\1^\wedge_\eta$.
	
	This discussion also shows that, for every $0<s\in \bN$ there exists a $W\in \bN$
	such that multiplication with $\eta$ induces an isomorphism
	$\pi_{s+(w)}\1^\wedge_\eta\xrightarrow{\eta} \pi_{s+(w+1)}\1^\wedge_\eta$
	for all $w\geq W$. Hence the colimit computing
	$\pi_{s+(w)}\1^\wedge_\eta[\eta^{-1}]$ actually stabilizes,
	which takes care of the final member in the $\eta$-arithmetic square
	for $\1$. This completes the proof in the case $e\neq 2$.
	
	Suppose now that $e=2$. Then the exponent of $\Witt(F)$ is $2$ by Lemma~\ref{thm:witt}.
	Morel's Theorem implies that $\1[\eta^{-1}]$ -- in which $2$ is implicitly inverted --
	is a motivic ring spectrum with $\pi_{0+(0)}\1[\eta^{-1}]=0$, and hence contractible.
	The identification $\1\simeq \1^\wedge_\eta$ follows from the
	$\eta$-arithmetic square.
	The latter is equivalent to the slice completion
	$\1^\wedge_\eta\simeq \slicecomp(\1)$ by \cite[Theorem 3.50]{rso.oneline}.
	Since in $\1$ the prime $2$ is implicitly inverted, the slices of $\1$ --
	as given in \cite[Theorem 2.12]{rso.oneline} -- lead to finite columns
	of motivic cohomology groups on the $E_1$-page of the slice spectral
	sequence, by the bidegree distribution of the second page of
	the odd-primary $\BP$-based
	Adams-Novikov spectral sequence for the topological sphere
	spectrum \cite{ravenel.green}.
	As mentioned above, this already implies the desired statement for
	$\pi_{s+(w)}\slicecomp(\1)$ because the
	finitely many motivic cohomology groups
	involved in every column each have finite coefficients.  
\end{proof}

\begin{rem}\label{rem:hopkins-morel}
	The conjectured Hopkins-Morel isomorphism in characteristic $p$ provides a computation of the $p$-local motivic stable stems; see e.g. \cite[Section~1.2]{Bachmann22}. Hence Theorem~\ref{thm:bounded_exponent} should hold without inverting the exponential characteristic. 
\end{rem}		
%
	\let\mathbb=\mathbf

\end{document}